\documentclass[10pt,a4paper,final]{article}
\usepackage[T1]{fontenc}
\usepackage[utf8]{inputenc}
\usepackage{amsmath,amsfonts,amsthm,mathrsfs,amssymb,cite,bm,mathtools}
\usepackage{graphicx,appendix}
\usepackage{showkeys}
\usepackage{float}
\usepackage{placeins}
\usepackage{color}
\usepackage{indentfirst}
\usepackage{booktabs,multirow}
\usepackage{longtable}
\usepackage[affil-it]{authblk}
\usepackage{caption}
\usepackage{subcaption}

\usepackage{algorithm,algorithmic}
\usepackage{enumitem} 

\numberwithin{equation}{section}
\newtheorem{theorem}{Theorem}[section]
\newtheorem{lemma}[theorem]{Lemma}

\newtheorem{definition}[theorem]{Definition}
\newtheorem{remark}[theorem]{Remark}

\allowdisplaybreaks[4]
          \title{\bfseries  Factorization and monotonicity methods for reconstructing impenetrable obstacles in inverse biharmonic scattering}
		\author[1]{Tielei Zhu\thanks{Corresponding author: zhutielei@126.com}}
		\author[2]{Zhihao Ge}
                \author[3]{Bangmin Wu}
                
              \affil[1]{College of Mathematics and Information Science, Henan University of Economics and Law, Zhengzhou, Henan, 450046, PR China.}
                \affil[2]{School of Mathematics and Statistics, Henan University, Kaifeng, Henan, 475004, PR China. }
                \affil[3]{College of Mathematics and Systems Science, Xinjiang University, Urumqi, Xinjiang, 830017, PR China.}
	
		\date{\today}
\begin{document}
	\maketitle
	\vspace{.2in}

        \begin{abstract} 
          The inverse scattering problem for biharmonic waves, governing flexural vibrations of elastic plates, presents fundamental analytical challenges distinct from acoustic inverse problems due to the fourth-order differential operator and higher-order boundary conditions. This paper addresses the reconstruction of impenetrable obstacles with Dirichlet or Neumann boundary conditions from far-field measurements. We establish  new factorizations of the far-field operator by considering  structures of the biharmonic fundamental solution and the boundary conditions. We rigorously prove that the factorizations satisfy the range identities and  derive characterizations of the obstacle's support by the factorization methods, valid for all wavenumbers except the associated transmission eigenvalues. Furthermore, we establish a monotonicity relation for the eigenvalues of the far-field operator, which yields an alternative characterization of the obstacle's support that remains applicable for all wavenumbers.  Numerical experiments for the Dirichlet obstacles with various shapes are presented to demonstrate the effectiveness and robustness of the proposed reconstruction schemes.
\end{abstract}

\vspace{.2in}\noindent{\bf Keywords:} Factorization method, monotonicity method, inverse scattering, biharmonic wave equation, shape reconstruction.
  
\vspace{.2in}\noindent{\bf AMES subject classifications}: 35P25, 35J30, 35R30
  
\section{Introduction}\label{sec:intro}
This paper develops  factorization and monotonicity methods for the inverse biharmonic scattering problem. The inverse problem has extensive applications across diverse domains, such as radar, geophysical exploration and photonic crystals, and has therefore attracted considerable academic attention.
Although the scattering problems for acoustic, elastic, and electromagnetic waves have been extensively investigated, biharmonic wave scattering problems governed by fourth-order differential equations remain largely unresolved and present significant challenges in theoretical analysis and numerical computation.

Extensive research has been devoted to numerical methods for  inverse scattering problems of the biharmonic wave. In \cite{chang_optimization_2023}, Chang  \& Guo introduced a novel optimization method to detect a clamped obstacle from the near-field data of both the scattered field and its Laplacian, which generalizes the decomposition method originally proposed by Colton  \& Kirsch \cite{colton_inverse_2019}. In \cite{bourgeois_linear_2020}  Bourgeois   \&  Recoquillay developed a near-field linear sampling method (LSM) for the reconstruction of the obstacle with Dirichlet or free plate boundary conditions. Guo \emph{et al.} subsequently  introduced a far-field LSM to identify the Dirichlet obstacle in \cite{guo2025}, whereas Harris \emph{et al.} \cite{ harris_sampling_2025} used a new factorization of the far-field operator to relax  the assumption on the Dirichlet eigenvalue of the scatterer, and proposed an extended sampling method to find the Dirichlet obstacle. In addition to these aforementioned  methods, direct sampling methods have been applied to  shape identification problems in \cite{harris_direct_2025,zhu_direct_2025}.

 The factorization methods, which provide a rigorous characteristic function of the obstacle by using the spectral data of the far-field operator, have been proven to be a powerful tool for a wide class of inverse problems, since Kirsch \cite{kirsch_charac_1998} introduced the $(F^{*}F)^{1/4}$-method for the reconstruction of the obstacle with Dirichlet or Neumann boundary conditions in inverse acoustic scattering problems. Subsequently, this method was further developed into a novel functional analysis framework, namely  the $F_{\#}^{1/2}$-method \cite{kirsch_factorization_2008}.  Regarding the applications to  various time-harmonic inverse scattering problems, we refer to \cite{arens_linear_2001,alves_far_2002,Hu_some_2013} for elastic scattering  problems, to \cite{kirsch_factorization_2004} for electromagnetic scattering problems, and to \cite{kirsch_factorization_2012,yin_near_2016,wu_reconstruction_2023} for fluid-solid interaction scattering problems. Besides, the factorization method has been extended to time-domain inverse scattering problems, such as acoustic scattering by obstacles with Dirichlet or Robin boundary conditions (see \cite{cakoni_factorization_2019,haddar_time_2020}), and even to the inverse Stokes problem \cite{lechleiter_factorization_2013}. We remark that the factorization of the far-field operator in the factorization method is closely related to the monotonicity method developed  in \cite{albicker_monotonicity_2020}.

 The monotonicity-based method was originally introduced for the inverse conductivity problem \cite{gebauer_localized_2008,harrach_monotonicity_2013} and has been  successfully applied to various inverse problems (see \cite{brander_monotonicity_2018,eberle_monotonicity_2025,lin_monotonicity_2022}). For the inverse medium scattering problems, Griesmair \& Harrach developed the first study of the monotonicity method, which is based on a special ordering (an extension of the Lowener order) between a given far-field operator and virtual probing operators to design the reconstruction  algorithm. The approach was subsequently extended to the case of the impenetrable obstacle scattering by  Albicker \& Griesmair  \cite{albicker_monotonicity_2020}. In \cite{furuya_remarks_2021}  Furuya later established  the general functional analysis theorem of the monotonicity method for inverse acoustic scattering problems. Further developments of the monotonicity method can be found in \cite{eberle_monotonicity_2025,harrach_monotonicity_2026}.

 The contributions of this paper are twofold.
 \begin{itemize}[leftmargin=*]
\item \underline{Factorization methods:}
  we present a rigorous analysis of the factorization methods for biharmonic wave scattering by an impenetrable obstacle with the Dirichlet or Neumann boundary conditions excluding the case where the wavenumber is the associated transmission eigenvalue. Notice that very recently, the factorization method has been applied to reconstruct absorbing penetrable obstacles from the far-field data of biharmonic waves in \cite{ayala_factorization_2025}, and to reconstruct a simply supported obstacle (with a Poisson ratio of 1)  from near-field point source measurements of biharmonic waves in \cite{harris_factorization_2026}. Compared with these cases, the case of the impenetrable obstacle with the Dirichlet or Neumann boundary condition is more challenging due to the more complicated factorization structure of the far-field operator.
\item \underline{Monotonicity method:}
  we develop a monotonicity method for inverse biharmonic wave scattering by the Dirichlet or Neumann obstacles, which  is valid for all wavenumbers without the clamped transmission eigenvalue assumption. To the best of our knowledge,  this work presents the first  application of the monotonicity-based approach to inverse biharmonic scattering problems. 
\end{itemize}
 The remainder of this paper is organized as follows. Section 2 formulates the direct and inverse biharmonic scattering problems and introduces relevant analytical tools. In Section 3, we then study the factorizations of the far-field operator and develop factorization methods for Dirichlet and Neumann obstacles. Based on these factorizations, Section 4 presents the  monotonicity method. Finally,  numerical  experiments are presented in Section 5 to show the reconstruction results of the proposed algorithms.

\section{Problem formulation}
\label{sec:problem}
Let $D\subset\mathbb{R}^2$ be a bounded domain  with $C^3$-smooth boundary $\partial D$, representing an impenetrable obstacle such that its complement $D^{c}:=\mathbb{R}^2\setminus\overline{D}$ is connected. We denote by $\bm{n}$ the outward unit normal to the boundary $\partial D$. The incident field and the scattered field are denoted by $u^{\rm in}$ and  $u^{\rm sc}$  respectively, and  total field is denoted by $u=u^{\rm in}+u^{\rm sc}$.
\emph{The direct biharmonic wave scattering problem} for the impenetrable obstacle is to find the scattered field $u^{\rm sc} \in  H^2_{\mathrm{loc}}(D^{c})$ such that
  \begin{align}
    \label{eq:bvp-1}
    \Delta^2u^{\rm sc} - \kappa^4u^{\rm sc} &= 0 \quad \mathrm{ in}\; D^{c},\\
       \label{eq:bvp-2}
    (B_1 ( u^{\rm sc}+u^{\rm in}),B_2 ( u^{\rm sc}+u^{\rm in})) &= {\bm 0} \quad \mathrm{on}\; \partial D,\\
        \label{eq:bvp-4}
    \lim\limits_{r\to\infty}\sqrt{r}\left(\frac{\partial u^{\rm sc}}{\partial r}
        -\mathrm{i}\kappa u^{\rm sc} \right)&=0\quad r=| \bm{ x } |,
  \end{align}
  where $\kappa>0$ is the wave number. 
The incident field  $u^{\rm in}= e^{{\rm i}\kappa \bm{ x }\cdot \bm{ d }}$  is the plane wave with $\bm{ d } \in \mathbb{S}:=\{\bm{x}\in\mathbb{R}^2:|\bm{x}|=1\}$ as the incident direction. We call \eqref{eq:bvp-2}  the Dirichlet boundary condition (corresponding to Dirichlet plate) if $(B_1,B_2)=(I,\partial_{\bm{ n }})$ or the Neumann boundary condition (corresponding to the free plate) if $(B_1,B_2)=(M,N)$. The boundary differential operators  $Mv$ and  $Nv$ (see \cite{chen_boundary_2010}) are given by
\begin{equation*}
M v = \nu \Delta v + (1-\nu) M_0 v \quad
\mathrm{ and } \quad
N v = -\partial_{\bm{n} }\Delta v - (1-\nu) \partial_sN_0 v,
\end{equation*}
where $\nu\in[0,\frac{1}{2})$ is the Poisson ratio,
$\partial_s$ is the tangential derivative, 
and the boundary operators  $M_0 v$ and $N_0 v$ are defined by
\begin{equation*}
M_0 v = n_1^2 \dfrac{\partial^2 v }{\partial x_1^2} + 2n_1n_2 \dfrac{\partial^2v }{\partial x_1\partial x_2}
+ n_2^2 \dfrac{\partial^2 v }{\partial x_2^2}
\end{equation*}
and
\begin{equation*}
N_0 v = -\Big\{  \big( \dfrac{\partial^2 v }{\partial x_1^2}-  \dfrac{\partial^2 v }{\partial x_2^2} \big)n_1n_2  - \dfrac{\partial^2v }{\partial x_1\partial x_2}(n_1^2 - n_2^2)
\Big\}.
\end{equation*}
  Sommerfeld radiation condition \eqref{eq:bvp-4},
  which holds uniformly with respect to all directions $\hat{\bm{ x }} = \bm{ x }/| \bm{ x } | \in  \mathbb{S} $, yields the far-field pattern $u^{\infty}$ of the scattered field $u^{\rm sc}$ such that
\begin{equation}
  \label{eq:asy}
  u^{\rm sc}(\bm{x}) = \dfrac{e^{\pi \mathrm{i}/4}}{\sqrt{8\pi\kappa}}
  \dfrac{e^{\mathrm{i}\kappa|\bm{x}|}}{|\bm{x}|^{1/2}}
u^{\infty}(\hat{\bm{x}}) + O\left(\dfrac{1}{|\bm{x}|^{3/2}} \right) \quad |\bm{x}|\to\infty.
\end{equation}
Henceforth, a solution of \eqref{eq:bvp-1} satisfying the radiation condition \eqref{eq:bvp-4} will be referred to as a \emph{radiating solution} of the biharmonic equation in $D^c$.
The radiating fundamental solution $G$  of the biharmonic wave equation is given by
\begin{equation}
  \label{eq:funda-bi}
  G(\bm{x},\bm{y}) = \frac{1}{2\kappa^{2}}
  \left[\Phi_{\mathrm{i}\kappa}(\bm{x},\bm{y})- \Phi_{\kappa}(\bm{x},\bm{y}) \right],
\end{equation}
where $\Phi_{\kappa}(\bm{ x },\bm{ y })=\frac{\mathrm{ i }}{4} H^{(1)}_0(\kappa | \bm{ x }-\bm{ y } |)$
and $H^{(1)}_0$ is the Hankel function of the first kind of the order zero.
From the asymptotic behaviors \cite{dong_novel_2024} of the Hankel function $H^{(1)}_0$, we see that
\begin{equation}
  \label{eq:asy-G}
  G(\bm{ x } , \bm{ y }) = \dfrac{e^{\pi \mathrm{i}/4}}{\sqrt{8\pi\kappa}}
  \dfrac{e^{\mathrm{i}\kappa|\bm{x}|}}{|\bm{x}|^{1/2}}
  \left(  -\frac{1}{2 \kappa^2}e^{-{\rm i}\kappa \hat{\bm{ x }} \cdot \bm{ y }}  \right)
  +  O\left(\dfrac{1}{|\bm{x}|^{3/2}} \right) \quad |\bm{x}|\to\infty.
\end{equation}

The  well-posedness of the exterior boundary value problem \eqref{eq:bvp-1}-\eqref{eq:bvp-4}  has been established in \cite{bourgeois_well_2020} and \cite{wu_obstacle_2025}. More precisely,  for the Dirichlet obstacle, the result holds for all positive wavenumbers as established by the variational method or boundary integral method   and for the Neumann obstacle, it was obtained  by the variational method for all  positive wavenumbers 
 except $\kappa\in \mathcal{K}_0$, where $\mathcal{K}_0$ is a discrete set consist of positive numbers $\kappa_n$ ($n\in \mathbb{N}$) accumulating at $+\infty$. 
Throughout this paper, \emph{ we assume $\kappa\not\in \mathcal{K}_0$ for the case of Neumann obstacle.}

\emph{The inverse scattering problem} we study is to recover the location and shape of the impenetrable obstacle $D$ from knowledge of the far-field patterns $u^{\infty}(\hat{\bm{ x }},\bm{ d })$ for all $\hat{\bm{ x }},\bm{ d}\in \mathbb{S}$, where  $u^{\infty}(\hat{\bm{ x }},\bm{ d })$ is the far-field pattern of the scattered field  $u^{\rm sc}(\bm{ x },\bm{ d })$ with  $u^{\rm in}(\bm{ x },\bm{ d })= e^{{\rm i}\kappa \bm{ x }\cdot \bm{ d }}$  as the incident field. Note that the uniqueness results of the inverse problem have been established by Dong \& Li \cite{dong_uniqueness_2024} and Wu \& Yang \cite{wu_obstacle_2025} for the Dirichlet obstacle case. However, the  Neumann obstacle case remains unsolved. We remark that novel uniqueness results for two cases can be provided by our  factorization methods.

For latter use, we introduce the integration by parts in the biharmonic wave equation (see \cite{Hsiao_boundary_2021}).
\begin{lemma}[Green's formulas]
  \label{lem:green}
  Let $\Omega$ be a bounded and $C^2$-smooth domain, and denote $\bm{ n }$ by the outward unit normal on $\partial \Omega$. Then
for $v\in H^2(\Omega,\Delta^2):=\{v \in H^2(\Omega):\Delta^2v \in L^{2}(\Omega)\}$ and $w \in H^2(\Omega)$, the following formula holds:
\begin{equation} \label{eq:green-1}
  \int_{ \Omega } (\Delta^{2} v) w \;\mathrm{d}\bm{x}
  = \int_{ \Omega }  a(v,w) \;\mathrm{d}\bm{x}
  - \int_{ \partial \Omega } \left[(Mv)\partial_{\bm{n} } w + (Nv) w \right] \;\mathrm{d}s,
\end{equation}
where
\begin{equation*}
  a(v,w)= \nu  \Delta v \Delta w  +
  (1-\nu)\left( \dfrac{\partial^2 v}{\partial x_1^2}\dfrac{\partial^2 w}{\partial x_1^2}
    + 2 \dfrac{\partial^2 v}{\partial x_1 \partial x_2 }\dfrac{\partial^2 w}{\partial x_1\partial x_2  }
    +\dfrac{\partial^2 v}{\partial x_2^2}\dfrac{\partial^2 w}{\partial x_2^2}
     \right).
\end{equation*}
For $v,w \in H^2(\Omega,\Delta^2)$, the following formula holds:
\begin{equation} \label{eq:green-2}
  \int_{ \Omega }\Big[ w (\Delta^{2} v) -  v(\Delta^{2} w)\Big] \;\mathrm{d}\bm{x} =   \int_{ \partial \Omega }\Big\{
   \left[(Mw)\partial_{\bm{n} } v + (Nw) v \right]
  - \left[(Mv)\partial_{\bm{n} } w + (Nv) w \right]\Big\} \;\mathrm{d}s.
\end{equation}
\end{lemma}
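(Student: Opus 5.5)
We prove \eqref{eq:green-1} first for smooth functions $v,w\in C^4(\overline{\Omega})$ and then pass to the stated Sobolev classes by density. The pointwise starting point is that $\Delta^2 v=\sum_{i,j}\partial_i\partial_j(\partial_i\partial_j v)$. This lets us rewrite the integrand as
\begin{equation*}
\Delta^2 v=\partial_i\partial_j\big(\nu\,\delta_{ij}\Delta v+(1-\nu)\partial_i\partial_j v\big),
\end{equation*}
with summation over $i,j$. Denote the symmetric tensor in parentheses by $\sigma_{ij}(v)$; the identity holds because $\partial_i\partial_j(\delta_{ij}\Delta v)=\Delta^2 v$ as well. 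By construction $\sigma_{ij}(v)\,\partial_i\partial_j w=a(v,w)$. Applying the divergence theorem twice gives
\begin{equation*}
\int_\Omega (\Delta^2 v)w=\int_\Omega a(v,w)-\int_{\partial\Omega}\sigma_{ij}(v)n_i\,\partial_j w\,\mathrm{d}s+\int_{\partial\Omega}\partial_i\sigma_{ij}(v)\,n_j\,w\,\mathrm{d}s .
\end{equation*}

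The next step is to recast the boundary terms in the form of \eqref{eq:green-1}. On $\partial\Omega$ we split $\nabla w=(\partial_{\bm n}w)\bm n+(\partial_s w)\bm t$, where $\bm t=(-n_2,n_1)$. The normal part contributes $\sigma_{ij}n_in_j\,\partial_{\bm n}w=(\nu\Delta v+(1-\nu)M_0v)\partial_{\bm n}w=(Mv)\,\partial_{\bm n}w$. For the tangential part, a direct expansion shows that $\sigma_{ij}n_it_j=(1-\nu)\,[\text{twisting term}]=-(1-\nu)N_0v$, up to the sign convention fixed by the definition of $N_0$. Since $\partial\Omega$ is a closed $C^2$ curve, we can integrate by parts along it with no endpoint contributions:
\begin{equation*}
\int_{\partial\Omega} \sigma_{ij}n_it_j\,\partial_s w=-\int_{\partial\Omega}\partial_s(\sigma_{ij}n_it_j)\,w .
\end{equation*}
It remains to evaluate $\partial_i\sigma_{ij}n_j=\nu\,\partial_{\bm n}\Delta v+(1-\nu)\,\partial_{\bm n}\Delta v=\partial_{\bm n}\Delta v$. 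Collecting terms, the coefficient of $w$ on the boundary becomes $\partial_{\bm n}\Delta v+(1-\nu)\partial_sN_0v=-Nv$, which yields \eqref{eq:green-1}. The main obstacle is this bookkeeping: the signs and the orientation of $\bm t$ must match the paper's definitions of $N_0$ and $N$. Because $\partial_s$ falls on $n_i,t_j$, the boundary must be $C^2$ so that the curvature terms make sense. We will check the sign on a flat boundary piece, where $\bm n=(0,1)$ and $N_0v=\partial_1\partial_2v$.

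To extend to the stated function spaces we argue by density. $C^\infty(\overline\Omega)$ is dense in $H^2(\Omega)$, and $\{v\in C^\infty(\overline\Omega)\}$ is dense in $H^2(\Omega,\Delta^2)$ with respect to the graph norm. For $v\in H^2(\Omega,\Delta^2)$ the traces $Mv\in H^{-1/2}(\partial\Omega)$ and $Nv\in H^{-3/2}(\partial\Omega)$ are defined by continuity; equivalently, \eqref{eq:green-1} itself serves as their weak definition. The boundary integrals are then read as the dualities with $\partial_{\bm n}w\in H^{1/2}$ and $w\in H^{3/2}$. We pass to the limit in both sides using the trace theorem, as in \cite{Hsiao_boundary_2021}.

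Finally, \eqref{eq:green-2} follows by applying \eqref{eq:green-1} twice, once to $(v,w)$ and once with the roles exchanged. We then subtract and use the symmetry $a(v,w)=a(w,v)$, so the domain integrals cancel and only the boundary terms remain.
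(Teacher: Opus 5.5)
The paper states this lemma without proof and simply refers to \cite{Hsiao_boundary_2021}. Your route is the standard derivation: write $\Delta^2 v=\partial_i\partial_j\sigma_{ij}(v)$, integrate by parts twice, split $\nabla w$ on $\partial\Omega$ into normal and tangential parts, and integrate by parts along the closed boundary curve. Then pass to $H^2(\Omega,\Delta^2)$ by density, and get \eqref{eq:green-2} by exchanging roles and using the symmetry of $a$. All of this checks except the step you yourself flagged, and there the signs you wrote are wrong and inconsistent with your own conclusion. With $\bm t=(-n_2,n_1)$ and $\bm n\cdot\bm t=0$, the $\nu\Delta v$ part drops out and
\begin{equation*}
\sigma_{ij}(v)n_it_j=(1-\nu)\Big[-n_1n_2\frac{\partial^2 v}{\partial x_1^2}+(n_1^2-n_2^2)\frac{\partial^2 v}{\partial x_1\partial x_2}+n_1n_2\frac{\partial^2 v}{\partial x_2^2}\Big]=+(1-\nu)N_0v .
\end{equation*}
If you take your sign $-(1-\nu)N_0v$ literally, the tangential integration by parts gives $-\int_{\partial\Omega}\sigma_{ij}n_it_j\,\partial_s w\,\mathrm{d}s=-(1-\nu)\int_{\partial\Omega}(\partial_sN_0v)\,w\,\mathrm{d}s$. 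The boundary coefficient of $w$ would then be $\partial_{\bm n}\Delta v-(1-\nu)\partial_sN_0v\neq-Nv$, so your ``collecting terms'' line only follows with the corrected sign. Your proposed flat-boundary check would not catch this. For $\bm n=(0,1)$ one has $n_1n_2=0$ and $n_1^2-n_2^2=-1$, hence $N_0v=-\partial_1\partial_2 v$, not $\partial_1\partial_2v$. With $\bm t=(-1,0)$ one gets $\sigma_{ij}n_it_j=-(1-\nu)\partial_1\partial_2v=(1-\nu)N_0v$, which confirms the plus sign.

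You should also fix the orientation explicitly: $\partial_s=\bm t\cdot\nabla$ with $\bm t=(-n_2,n_1)$, which is counterclockwise on an outer boundary. The product $\sigma_{ij}n_it_j\,\partial_s w$ is unchanged under $\bm t\to-\bm t$, but $N_0$ is defined with a fixed sign. Hence $\partial_sN_0v$, and with it the formula in terms of the paper's $N$, is correct only for this orientation. The density of $C^\infty(\overline\Omega)$ in $H^2(\Omega,\Delta^2)$ should be cited, as you do, rather than proved by a naive cutoff-and-mollify argument, since $\nabla\Delta v$ need not be square integrable up to the boundary. With these corrections, your argument is a complete proof of both formulas.
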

Using the above Green's formulas and the fundamental solution $G$, we can obtain the representations for  the radiation solution and the associated far-field pattern  of the biharmonic wave equation.
\begin{lemma}[see Lemma 2.2 in \cite{zhu_direct_2025}]
  \label{lem:rep}
    Let $\Omega$ be a bounded and $C^2$-smooth domain. 
  If $w\in H^2_{\mathrm{loc}}(\Omega^{c})$ is the radiation solution of the biharmonic wave equation in $\Omega^c$, 
  then   $w$ is given by
  \begin{align}
    \notag
    w(\bm{x})  =&
                \int_{\Gamma}\Big\{
                  [ -G(\bm{x},\bm{y})\partial_{\bm{n}}\Delta w(\bm{y})
                  + \partial_{\bm{n}(\bm{y})}G(\bm{x},\bm{y})\Delta w(\bm{y}) ]\\
    \label{eq:Green-rep}
              &\qquad\quad  - [ -\partial_{\bm{n}(\bm{y})}\Delta_{\bm{y}} G(\bm{x},\bm{y})    w(\bm{y})
                    + \Delta_{\bm{y}} G(\bm{x},\bm{y})   \partial_{\bm{n}} w(\bm{y}) ]
                    \Big\} \;\mathrm{d}s(\bm{y}) \quad \bm{x}\in D^{c},  
  \end{align}
  and the associated far-field pattern $w^{\infty}$ are given by
\begin{align}
    \notag
            w^{\infty}(\hat{\bm x}) = & -\frac{1}{2\kappa^{2}}
           \int_{\partial D}\Big\{
           [  e^{-\mathrm{i}\kappa\hat{\bm x}\cdot \bm{y}}N w(\bm{y})
           + (\partial_{\bm{n}(\bm{y})}e^{-\mathrm{i}\kappa\hat{\bm x}\cdot \bm{y}})  M w(\bm{y}) ] \\
           \label{eq:far-rep}
             & \qquad\qquad\quad   - [ (N_{\bm{y}}e^{-\mathrm{i}\kappa\hat{\bm x}\cdot \bm{y}})    w(\bm{y})
                    + (M_{\bm{y}}e^{-\mathrm{i}\kappa\hat{\bm x}\cdot \bm{y}})  \partial_{\bm{n}} w(\bm{y}) ]
               \Big\} \;\mathrm{d}s(\bm{y}) \quad \hat{\bm x}\in \mathbb{S}.
\end{align}
\end{lemma}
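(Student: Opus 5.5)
The plan is to prove \eqref{eq:Green-rep} by applying the second Green formula \eqref{eq:green-2} of Lemma~\ref{lem:green} on a truncated exterior domain, and then to read off \eqref{eq:far-rep} from the asymptotics \eqref{eq:asy-G} of $G$.

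Fix $\bm x\in\Omega^c$. Choose $R$ large so that $\overline\Omega\cup\{\bm x\}\subset B_R$, and choose $\varepsilon>0$ small. Set $\Omega_{R,\varepsilon}=(B_R\setminus\overline\Omega)\setminus\overline{B_\varepsilon(\bm x)}$. On this set, apply \eqref{eq:green-2} with $v=w$ and with $G(\bm x,\cdot)$ in the other slot. Both functions satisfy $\Delta^2\phi-\kappa^4\phi=0$ there, so the volume integral vanishes. The boundary terms then split into contributions from $\partial\Omega$, from $\partial B_R$ and from $\partial B_\varepsilon(\bm x)$.

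It is convenient not to use the $M,N$ form but the $\nu$-independent one. The $\nu$-dependent parts of $M$ and $N$ come from the null-Lagrangian part of $a(\cdot,\cdot)$: their difference is $\Delta v\Delta w-(\text{Hessian product})$, and it integrates to a total tangential derivative on closed curves. Hence the boundary form equals $\int[\partial_{\bm n}\Delta v\,w-\Delta v\,\partial_{\bm n}w-\partial_{\bm n}\Delta w\,v+\Delta w\,\partial_{\bm n}v]$, which is just Green's second identity for the Laplacian applied twice. This reproduces the kernel in \eqref{eq:Green-rep}.

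\textbf{Small circle.} Use the splitting \eqref{eq:funda-bi}. Near $\bm y=\bm x$, $G$ is $C^1$ with a mild $|\bm x-\bm y|^2\log|\bm x-\bm y|$ singularity. Moreover,
\[
\Delta_{\bm y}G=\tfrac{1}{2\kappa^2}\bigl(-\kappa^2\Phi_{\mathrm i\kappa}+\kappa^2\Phi_\kappa\bigr),
\]
which has the logarithmic singularity $-\tfrac{1}{2\pi}\log|\bm x-\bm y|$ up to a smooth part. Consequently only the term containing $\partial_{\bm n}\Delta_{\bm y}G\,w$ survives as $\varepsilon\to0$, and it tends to $w(\bm x)$. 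The other three terms are $O(\varepsilon\log\varepsilon)$, by interior regularity of $w$.

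\textbf{Large circle.} Decompose $w=w_1+w_2$ with $w_1=-(\Delta w-\kappa^2w)/(2\kappa^2)$ a radiating Helmholtz solution with wavenumber $\kappa$, and $w_2$ a decaying modified-Helmholtz solution. Doing the same for $G$ reduces the integral over $|\bm y|=R$ to Helmholtz-type integrals of the form $\int(\partial_r u\,\Phi-u\,\partial_r\Phi)$. These vanish as $R\to\infty$ by the standard Sommerfeld argument, while the exponentially decaying parts vanish trivially. Cross terms between the two families also vanish, since each involves an exponentially decaying factor.

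This step is the main technical point. Note that "radiating" must here be understood as in the paper, with the radiation condition applying to the Helmholtz component.

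\textbf{Far field.} Substitute \eqref{eq:asy-G} and its derivative asymptotics into \eqref{eq:Green-rep}. The terms involving $\Phi_{\mathrm i\kappa}$ decay exponentially, so they do not contribute. This yields \eqref{eq:far-rep} with the kernel $e^{-\mathrm i\kappa\hat{\bm x}\cdot\bm y}$, after converting back to the $M,N$ form by the same null-Lagrangian identity, now applied to $e^{-\mathrm i\kappa\hat{\bm x}\cdot \bm y}$ and $w$.
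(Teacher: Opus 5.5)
Your argument is correct and is the standard route: Green's second identity on $(B_R\setminus\overline\Omega)\setminus\overline{B_\varepsilon(\bm x)}$, the propagating/evanescent splitting on $|\bm y|=R$, the $\nu$-independence of the boundary form (the $\nu$-dependent part is indeed a total tangential derivative on each closed curve), and then the asymptotics \eqref{eq:asy-G}; the paper gives no proof of its own and only cites Lemma 2.2 of \cite{zhu_direct_2025}. Two small fixes: first, away from $\bm y=\bm x$ one has $\Delta_{\bm y}G=\tfrac12(\Phi_{\mathrm i\kappa}+\Phi_\kappa)$, since $\Delta\Phi_{\mathrm i\kappa}=\kappa^2\Phi_{\mathrm i\kappa}$ (with your displayed formula the two logarithms would cancel and no point value would survive, although the singularity $-\tfrac{1}{2\pi}\log|\bm x-\bm y|$ you then use is the correct one); second, the paper imposes \eqref{eq:bvp-4} on $w$ itself rather than on its Helmholtz component, so you should add the standard observation that this forces $w_{\rm pr}$ to satisfy the Sommerfeld condition and $w_{\rm ev}$ to decay exponentially (for example, a Fourier expansion on $|\bm x|>R_0$ rules out the $I_n$ and $H^{(2)}_n$ modes).
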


The following lemma is significant in the analysis of the factorization methods.
\begin{lemma}[see Lemma 2.3 in \cite{zhu_direct_2025}]
  \label{lem:sign}
  Let $\Omega$ be a bounded and $C^2$-smooth domain. If  $w \in H_{\mathrm{ loc } }^2(\Omega^c)$ is the radiation solution of the biharmonic wave equation in $\Omega^c$, then the following formula holds:
  \begin{equation*}
    \mathrm{Im} \int_{\partial \Omega}[\overline{w}\, N w +  \partial_{\bm{n}}\overline{w}\, M w] \;\mathrm{d}s
    = \frac{\kappa^2}{4\pi}\int_{ \mathbb{S}   } |w^{\infty}|^2 \;\mathrm{d}s
  \end{equation*}
  where $w^{\infty}$ is the far-field pattern of $w$.
\end{lemma}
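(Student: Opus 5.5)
The plan is to apply Green's second formula \eqref{eq:green-2} with $v=w$ and $w$ replaced by $\overline{w}$ on the annular region $\Omega_R:=B_R\setminus\overline{\Omega}$, where $B_R$ is a large disc containing $\overline{\Omega}$. Since $\kappa$ is real, $\overline{w}$ also satisfies $\Delta^2\overline{w}-\kappa^4\overline{w}=0$, so the volume integral vanishes. The boundary of $\Omega_R$ consists of $\partial B_R$, with outward normal $\hat{\bm x}$, and $\partial\Omega$, where the outward normal of $\Omega_R$ is $-\bm n$. Moreover $M$ and $N$ are odd in $\bm n$ and $\partial_{\bm n}$ is odd, so $(Mv)\partial_{\bm n}w$ and $(Nv)w$ are invariant under $\bm n\to-\bm n$. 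The identity on $\partial\Omega$ and the corresponding one on $\partial B_R$ therefore differ only by the orientation of the contour. Writing $J(\Gamma):=\int_\Gamma[\overline w Nw+\partial_{\bm n}\overline w Mw]\,\mathrm ds$, the formula says $J(\Gamma)-\overline{J(\Gamma)}$ is the same for $\Gamma=\partial\Omega$ and $\Gamma=\partial B_R$. Hence $\mathrm{Im}\,J(\partial\Omega)=\mathrm{Im}\,J(\partial B_R)$ for every large $R$, and it suffices to compute $\lim_{R\to\infty}\mathrm{Im}\,J(\partial B_R)$.

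To evaluate the limit I would split $w=w_H+w_M$, with $w_H=-\frac{1}{2\kappa^2}(\Delta w-\kappa^2 w)$ and $w_M=\frac{1}{2\kappa^2}(\Delta w+\kappa^2 w)$. Then $\Delta w_H+\kappa^2 w_H=0$ and $\Delta w_M-\kappa^2 w_M=0$. The standard decomposition for radiating biharmonic solutions (consistent with the representation in Lemma~\ref{lem:rep} and the structure \eqref{eq:funda-bi} of $G$) shows that $w_H$ is a radiating Helmholtz solution with far field $w^\infty$, while $w_M$ decays exponentially with all derivatives. On $\partial B_R$ every term involving $w_M$ is therefore $o(1)$, and $J(\partial B_R)$ reduces to the same expression with $w$ replaced by $w_H$, up to $o(1)$.

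For $w_H$ I would use the asymptotics \eqref{eq:asy}: $w_H=c\,e^{\mathrm i\kappa r}r^{-1/2}w^\infty(\hat{\bm x})+O(r^{-3/2})$ with $|c|^2=1/(8\pi\kappa)$, where each radial derivative brings a factor $\mathrm i\kappa$ and tangential derivatives gain a factor $r^{-1}$. On the circle, $\partial_{\bm n}=\partial_r$ and the tangential contributions to $M$ and $N$ are lower order. This gives
\begin{align*}
Mw_H&\approx \partial_r^2 w_H\approx-\kappa^2 w_H,\\
Nw_H&\approx-\partial_r\Delta w_H=\kappa^2\partial_r w_H\approx \mathrm i\kappa^3 w_H .
\end{align*}
Substituting these,
\begin{equation*}
\overline{w_H}\,Nw_H+\partial_r\overline{w_H}\,Mw_H\approx \mathrm i\kappa^3|w_H|^2+(-\mathrm i\kappa)(-\kappa^2)|w_H|^2=2\mathrm i\kappa^3|w_H|^2 .
\end{equation*}
Integrating over $\partial B_R$ with $\mathrm ds=R\,\mathrm d\hat{\bm x}$ and $|w_H|^2\approx|w^\infty|^2/(8\pi\kappa R)$ yields $\mathrm{Im}\,J\to 2\kappa^3/(8\pi\kappa)\int_{\mathbb S}|w^\infty|^2=\frac{\kappa^2}{4\pi}\int_{\mathbb S}|w^\infty|^2$, which is the claimed formula.

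The main obstacle is rigor in the asymptotic step. One must justify that the $O(r^{-3/2})$ remainders and the tangential parts of $M$ and $N$, including the $(1-\nu)\partial_sN_0$ term and the curvature contributions, contribute $o(1)$ after multiplication by $R$. The intended tool is that the expansion of a radiating Helmholtz solution can be differentiated termwise uniformly in $\hat{\bm x}$. Alternatively, one can avoid differentiating asymptotics by combining the Helmholtz identity $\mathrm{Im}\int_{\partial B_R}\overline{w_H}\partial_r w_H\to\kappa|c|^2\|w^\infty\|^2$ with the exact relations $\Delta w_H=-\kappa^2 w_H$ and the radiation condition applied to $\partial_r w_H$.
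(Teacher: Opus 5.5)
The paper does not prove this lemma; it quotes it from \cite{zhu_direct_2025}, so there is no in-paper argument to compare against. Your route is the standard one: Green's second formula on $B_R\setminus\overline{\Omega}$, then the large-$R$ asymptotics of the propagating part $w_H$. Your constant is also right: the integrand on $\partial B_R$ behaves like $2\mathrm{i}\kappa^3|w_H|^2$, which yields $\frac{\kappa^2}{4\pi}\|w^\infty\|^2_{L^2(\mathbb{S})}$. The polarized form of the same computation is what the paper uses for $I_1$ in the proof of Theorem~\ref{thm:far-field}(c).

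The orientation step, however, is justified incorrectly, although the identity you draw from it is the right one. $M$ is \emph{even} in $\bm n$, not odd. The term $\nu\Delta v$ does not involve $\bm n$, and $M_0v=\sum_{i,j}n_in_j\partial_i\partial_jv$ is quadratic in $\bm n$. $N$ is odd:
\begin{itemize}
\item $-\partial_{\bm n}\Delta v$ is odd;
\item $N_0v=\sum_{i,j}t_in_j\partial_i\partial_jv$ with $\bm t=(-n_2,n_1)$ is even, because $\bm t$ flips with $\bm n$;
\item $\partial_s$, taken along this same $\bm t$ as the Green formula requires, is odd.
\end{itemize}
Hence $(Mv)\partial_{\bm n}w$ and $(Nv)w$ both \emph{change sign} under $\bm n\to-\bm n$; they are not invariant.

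That sign change is exactly what makes the $\partial\Omega$ part of Green's formula on $\Omega_R$ (whose outward normal there is $-\bm n$) enter with a minus sign. This gives $0=(\overline{J}-J)(\partial B_R)-(\overline{J}-J)(\partial\Omega)$, hence $\mathrm{Im}\,J(\partial\Omega)=\mathrm{Im}\,J(\partial B_R)$. If the products really were invariant, as you state, the same formula would give $\mathrm{Im}\,J(\partial\Omega)=-\mathrm{Im}\,J(\partial B_R)$, and the lemma would come out with the wrong sign. Replace the parity sentence accordingly.

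The rest is routine, with two points to tidy up.

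First, you use that the Sommerfeld condition on $w$ alone forces $w_H$ to be radiating with far field $w^\infty$, and $w_M$ to decay exponentially together with its derivatives. This is a known but nontrivial fact and should be cited. It underlies the Rellich-type lemma the paper takes from \cite{dong_uniqueness_2024}; see also \cite{bourgeois_well_2020}.

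Second, the asymptotic step you single out closes quickly. Represent $w_H$ in $|\bm x|>R_0$ by the Helmholtz Green formula over $|\bm y|=R_0$ and differentiate under the integral. Each $\bm x$-derivative of $\Phi_\kappa(\bm x,\bm y)$ contributes a factor $\mathrm{i}\kappa\hat x_j$ to leading order, with an $O(|\bm x|^{-3/2})$ remainder uniform for $|\bm y|=R_0$. So all derivatives of $w_H$ up to order three have the expected leading term, with $O(R^{-3/2})$ error uniformly in $\hat{\bm x}$. On $\partial B_R$ we have $\bm t\perp\hat{\bm x}$ and curvature $1/R$, which gives $N_0w_H=O(R^{-3/2})$ and $\partial_sN_0w_H=O(R^{-3/2})$. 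After multiplying by $\overline{w_H}=O(R^{-1/2})$ and integrating over a curve of length $2\pi R$, this contributes only $O(R^{-1})$. The remainders in $Mw_H$ and $\kappa^2\partial_rw_H$ are handled the same way.
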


\section{The factorization methods}
\label{sec:factorization}
In this section, we present two factorization methods, i.e.,  $(F^*F)^{1/4}$- and $F^{1/2}_{\#}$-methods for the inverse biharmonic scattering problem in the case of the Dirichlet or Neumann obstacle. We recall classic range identities\cite{kirsch_factorization_2008,furuya_remarks_2021} and apply them to obtain the characterization of the support for the obstacle.

\begin{theorem}[$(F^*F)^{1/4}$-method] \label{thm:range-1}
  Let $H$ be a Hilbert space, $X$ a reflexive Banach space and let the compact operator $F:H\to H$ such that
  \begin{equation*}
F=BAB^{*}
  \end{equation*}
  where $B:X\to H$ and $A:X^{*}\to X$ are linear bounded operators. Assume that
  \begin{enumerate}
  \item[(1)] $ A =   A_{0} + C$ with some compact operator $C$ and some self-adjoint operator $A_{0}$
    which satisfies the  coercivity condition  on the range ${\rm Ran}(B^{*})$ of the operator $B^{*}$, i.e.,
    \begin{equation*}
| \langle \varphi, A_0\varphi\rangle | \geq c \| \varphi \|_{X^{*}}^2 \quad \text{ for all }  \quad \varphi \in {\rm Ran}(B^{*}).
    \end{equation*}

  \item[(2)] ${\rm Im}\, \langle \varphi,A\varphi\rangle \not= 0$ for all $\varphi \in \overline{{\rm Ran}(B^{*})}$ with $\varphi\not=0$.
    
    \item[(3)] $F$ is injective and $I+{\rm i}rF$ is unitary for some $r>0$.
    \end{enumerate}
    Then the ranges of $B$ and $(F^*F)^{1/4}$ coincide.
\end{theorem}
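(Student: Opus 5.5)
This is the abstract range identity of Kirsch; I would prove it in two stages. First, show that ${\rm Ran}(B)={\rm Ran}(F_\#^{1/2})$ with $F_\#:=|{\rm Re}\,F|+|{\rm Im}\,F|$. Second, use the normality of $F$ to identify $F_\#$ with $(F^*F)^{1/2}$, so that ${\rm Ran}(F_\#^{1/2})={\rm Ran}((F^*F)^{1/4})$.

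\textbf{Step 1 (normality and spectral structure).} From condition (3), $I+{\rm i}rF$ is unitary. Writing out $(I+{\rm i}rF)^*(I+{\rm i}rF)=I=(I+{\rm i}rF)(I+{\rm i}rF)^*$ gives
\begin{equation*}
r F^*F = {\rm i}(F^*-F) = rFF^*.
\end{equation*}
So $F$ is normal, and its eigenvalues $\lambda_j$ lie on the circle $|\lambda - {\rm i}/r| = 1/r$. Since $F$ is compact, injective and normal, there is a complete orthonormal eigensystem $\{\psi_j\}$ with $\lambda_j\neq 0$ and $\lambda_j\to 0$. Then $(F^*F)^{1/4}\psi_j=|\lambda_j|^{1/2}\psi_j$. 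Moreover $|{\rm Re}\,F|+|{\rm Im}\,F|$ acts by $|{\rm Re}\,\lambda_j|+|{\rm Im}\,\lambda_j|$, which is comparable to $|\lambda_j|$ with constants $1$ and $\sqrt2$. Hence $F_\#^{1/2}$ and $(F^*F)^{1/4}$ have the same range. This reduces the theorem to ${\rm Ran}(B)={\rm Ran}(F_\#^{1/2})$.

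\textbf{Step 2 (factorization of $F_\#$).} I would split $A$ into ${\rm Re}\,A=(A+A^*)/2$ and ${\rm Im}\,A$, so that ${\rm Re}\,F=B({\rm Re}\,A)B^*$ and ${\rm Im}\,F=B({\rm Im}\,A)B^*$.

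Condition (1) says ${\rm Re}\,A=A_0+{\rm Re}\,C$, a compact perturbation of a self-adjoint operator that is coercive on ${\rm Ran}(B^*)$. Condition (2) says ${\rm Im}\,A$ is nondegenerate on $\overline{{\rm Ran}(B^*)}$. Note also that ${\rm Im}\,F=\frac{r}{2}F^*F\ge 0$, so ${\rm Im}\,A\ge0$ on ${\rm Ran}(B^*)$.

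The goal is an operator $\tilde A:X^*\to X$ such that $F_\#=B\tilde AB^*$ and $\tilde A$ is coercive on $\overline{{\rm Ran}(B^*)}$. Once that holds, the standard lemma gives the result: if $F=B\tilde AB^*$ with $\tilde A$ coercive on the closure of ${\rm Ran}(B^*)$, then ${\rm Ran}(B)={\rm Ran}(F^{1/2})$. This lemma follows from the inequality $c\|B^*\varphi\|^2\le \|F^{1/2}\varphi\|^2\le C\|B^*\varphi\|^2$ together with a range-comparison (Douglas) argument.

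\textbf{Step 3 (the main obstacle: coercivity of $\tilde A$).} To construct $\tilde A$, decompose $H$ spectrally for ${\rm Re}\,F$. Let $P$ be the projection onto the finitely many eigenvectors whose eigenvalues have sign opposite to that of $A_0$; there are only finitely many because ${\rm Re}\,A$ is a compact perturbation of a definite operator on ${\rm Ran}(B^*)$. Then
\begin{equation*}
|{\rm Re}\,F| = {\rm Re}\,F - 2\,{\rm Re}\,F\,P ,
\end{equation*}
which is ${\rm Re}\,F$ up to a finite-rank correction, and the correction can be written in the form $BKB^*$ with $K$ of finite rank.

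It then remains to show that $\tilde A = \pm{\rm Re}\,A - 2K + {\rm Im}\,A$ is coercive on $\overline{{\rm Ran}(B^*)}$. I would argue by contradiction. Take a sequence $\varphi_n$ with $\|\varphi_n\|=1$ and $\langle\varphi_n,\tilde A\varphi_n\rangle\to0$, and pass to a weakly convergent subsequence $\varphi_n\rightharpoonup\varphi$ (using reflexivity). Use compactness of $C$ and $K$ to pass to the limit in those terms, and use the coercivity of $A_0$ to force $\varphi\neq0$. Positivity of each term then gives $\langle\varphi,{\rm Im}\,A\,\varphi\rangle=0$, contradicting condition (2).

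This step is the delicate part. One has to handle the finite-rank negative part carefully, showing $\langle\varphi,{\rm Re}\,A\,\varphi\rangle\ge0$ on the orthogonal complement and absorbing the finitely many exceptional directions via injectivity, which follows from condition (3). I would follow Kirsch--Grinberg's argument for this.
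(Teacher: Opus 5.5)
Your argument is correct, but it is not the route behind this statement. The paper gives no proof; it quotes the theorem from Kirsch--Grinberg. The argument there uses the eigen-decomposition of the normal operator $F$ directly:
\begin{itemize}
\item With the phases $s_j=\lambda_j/|\lambda_j|$ one has a second factorization $F=(F^*F)^{1/4}S(F^*F)^{1/4}$, where $S\psi_j=s_j\psi_j$.
\item A weak-limit argument of the kind you sketch shows that $A$ itself satisfies $|\langle\varphi,A\varphi\rangle|\ge c\|\varphi\|^2$ on $\overline{{\rm Ran}(B^*)}$.
\item The fixed sign of $A_0$, modulo the compact $C$, forces the phases to accumulate at only one of $\pm1$. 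Since all $s_j$ lie in the open upper half-plane, $S$ is then coercive.
\item The range identity follows by comparing the two factorizations.
\end{itemize}

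You instead pass through $F_\#$. Normality gives ${\rm Ran}(F_\#^{1/2})={\rm Ran}((F^*F)^{1/4})$ for free. The real work is then the finite-rank correction $|{\rm Re}\,F|={\rm Re}\,F-2\,{\rm Re}\,F\,P$ and the coercivity of $\tilde A$, which is essentially the proof of Theorem~\ref{thm:range-2}.

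What each route buys:
\begin{itemize}
\item Yours delivers both range identities used in the paper from one argument, and you never have to analyse where the phases accumulate.
\item The classical route avoids building $|{\rm Re}\,F|$ and the finite-rank $K$. For $K$ you need that the eigenvectors $\psi_j$ of ${\rm Re}\,F$ with eigenvalue $\mu_j\neq0$ lie in ${\rm Ran}(B)$. This is immediate from $\psi_j=\mu_j^{-1}B({\rm Re}\,A)B^*\psi_j$, but you should state it.
\end{itemize}

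Two small points.

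First, your last paragraph makes the coercivity step sound harder than it is. Both $\langle\varphi,(\pm{\rm Re}\,A-2K)\varphi\rangle$ and $\langle\varphi,{\rm Im}\,A\,\varphi\rangle$ are nonnegative on $\overline{{\rm Ran}(B^*)}$. For $\varphi=B^*\psi$ they equal $(|{\rm Re}\,F|\psi,\psi)$ and $({\rm Im}\,F\psi,\psi)$, and nonnegativity extends to the closure by continuity. No orthogonal-complement argument is needed. Injectivity of $F$ plays no role there; in fact your route never needs it, since $F_\#^{1/2}$ and $(F^*F)^{1/4}$ both vanish on ${\rm Ker}(F)$ for normal $F$.

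Second, say explicitly why $A_0$ is definite. The quantity $\langle\varphi,A_0\varphi\rangle$ is real, continuous and nonzero on the connected set ${\rm Ran}(B^*)\setminus\{0\}$. It therefore has a fixed sign there, and hence a fixed sign on the closure as well.
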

\begin{theorem}[$F_{\#}^{1/2}$-method] \label{thm:range-2}
  Let $X\subset H\subset X^{*}$ be a Gelfand triple with a Hilbert space $H$ and a reflexive Banach space $X$ such that the embedding is dense. Furthermore, let $Y$ be a Hilbert space and let $F:Y\to Y$,  $G:X\to Y$,  $T:X^{*}\to X$ be linear bounded operators such that
  \begin{equation*}
F=GTG^{*}.
  \end{equation*}
  We make the following assumptions:
  \begin{enumerate}
  \item[(1)] $G$ is compact with dense range in $Y$.
  
  \item[(2)] There exists $t \in [0,2\pi]$ such that ${\rm Re}\, (e^{{\rm i}t}T) =   T_{0} + K$, where $K$ is some self-adjoint compact operator and   $T_{0}$ is some positive coercive operator, i.e., there exists a constant $C>0$ such that
    \begin{equation*}
\langle\varphi, T_{0}\varphi\rangle \geq C  \| \varphi \|_{X^{*}}^2 \quad\text{for all}\quad \varphi \in  X^{*},  
    \end{equation*}
where $\langle \cdot ,\cdot \rangle$ denotes the duality pairing between $X^{*}$ and $X$.
  
    \item[(3)]  ${\rm Im}\, \langle \varphi,T\varphi\rangle > 0$ for all $\varphi \in \overline{{\rm Ran}(G^{*})}$ with $\varphi\not=0$.
    \end{enumerate}
    Then the ranges of $B$ and $F_{\#}^{1/2}$ coincide,
     where $F_{D,\#}^{1/2}:= | {\rm Re}\,  F_D| + | {\rm Im}\,  F_D|$, and $ {\rm Re}\,  F_D$ and $ {\rm Im}\,  F_D$ are defined as
  \begin{equation*}
    {\rm Re}\,  F_D : = \frac{F_D+ F_D^{*}}{2} \quad\text{and} \quad
     {\rm Im}\,  F_D  : = \frac{F_D- F_D^{*}}{2 {\rm i}}
   \end{equation*}
   respectively.
\end{theorem}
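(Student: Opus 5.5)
The plan follows the Kirsch--Grinberg argument for the $F_{\#}$-range identity. First, I read the conclusion as ${\rm Ran}(G)={\rm Ran}(F_{\#}^{1/2})$; the ``$B$'' in the statement is a leftover from Theorem~\ref{thm:range-1}. Second, replacing $F$ by $e^{{\rm i}t}F$ and $T$ by $e^{{\rm i}t}T$, I take $t=0$. Here $F_{\#}$ is understood to be built from $e^{{\rm i}t}F$, since the definition in the statement is written for $t=0$.

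The main intermediate goal is a factorization $F_{\#}=G\,T_{\#}\,G^{*}$ with $T_{\#}:X^*\to X$ self-adjoint and coercive on $\overline{{\rm Ran}(G^*)}$.

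\emph{Imaginary part.} From $F=GTG^*$ we get ${\rm Re}\,F=G({\rm Re}\,T)G^*$ and ${\rm Im}\,F=G({\rm Im}\,T)G^*$. By (3), $\langle {\rm Im}\,F\psi,\psi\rangle=\langle G^*\psi,({\rm Im}\,T)G^*\psi\rangle\ge 0$, so $|{\rm Im}\,F|={\rm Im}\,F$.

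\emph{Real part.} The operator ${\rm Re}\,F$ is compact and self-adjoint. Let $V$ be the span of its eigenvectors with negative eigenvalues.
\begin{itemize}
\item For $\psi\in V\setminus\{0\}$ we have $\langle (T_0+K)G^*\psi,G^*\psi\rangle<0$. The map $G^*$ is injective because $G$ has dense range.
\item $T_0$ is coercive and $K$ is compact, so the quadratic form of $T_0+K$ is nonnegative on a finite-codimensional subspace. Hence $\dim V<\infty$.
\item Each eigenvector with $\lambda_j\ne0$ satisfies $\psi_j=\lambda_j^{-1}({\rm Re}\,F)\psi_j\in{\rm Ran}(G)$, say $\psi_j=G\phi_j$.
\end{itemize}
Therefore
\begin{equation*}
|{\rm Re}\,F|={\rm Re}\,F-2\sum_j\lambda_j\langle\cdot,\psi_j\rangle\psi_j=G\bigl(T_0+K-2\textstyle\sum_j\lambda_j\langle\cdot,\phi_j\rangle\phi_j\bigr)G^*,
\end{equation*}
which has the form $G(T_0+K')G^*$ with $K'$ self-adjoint and compact (finite-rank correction). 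Adding the two parts gives $F_{\#}=G\,T_{\#}\,G^*$ with $T_{\#}=T_0+K'+{\rm Im}\,T$.

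\emph{Coercivity of $T_{\#}$.} This is the step I expect to be the main obstacle.
\begin{itemize}
\item For $\varphi=G^*\psi$ we have $\langle\varphi,T_{\#}\varphi\rangle=\langle|{\rm Re}\,F|\psi,\psi\rangle+\langle{\rm Im}\,F\psi,\psi\rangle$. The first term is $\ge0$, and by continuity it stays $\ge0$ on $\overline{{\rm Ran}(G^*)}$.
\item The second term is $\langle\varphi,({\rm Im}\,T)\varphi\rangle$, which is $>0$ for $\varphi\ne0$ in $\overline{{\rm Ran}(G^*)}$ by (3). So $T_{\#}$ is strictly positive there.
\item Suppose coercivity fails. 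Take $\varphi_n$ in the closure with $\|\varphi_n\|=1$ and $\langle\varphi_n,T_{\#}\varphi_n\rangle\to0$, and pass to a weakly convergent subsequence $\varphi_n\rightharpoonup\varphi$.
\item The term $\langle\varphi_n,K'\varphi_n\rangle$ converges by compactness. I expect ${\rm Im}\,T$ to be compact in the applications; otherwise I would group ${\rm Im}\,T$ with $|{\rm Re}|$ as a nonnegative form and use weak lower semicontinuity.
\item Then $\langle\varphi,T_{\#}\varphi\rangle\le0$, so $\varphi=0$. This forces $\langle\varphi_n,T_0\varphi_n\rangle\to0$, contradicting $\|\varphi_n\|=1$ and the coercivity of $T_0$.
\item Care is needed because the positive part $T_0$ and the compact part $K'$ act on all of $X^*$, while positivity is only known on the closure of ${\rm Ran}(G^*)$. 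I would run the contradiction argument entirely inside that closed subspace.
\end{itemize}

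\emph{Range identity.} Use the standard lemma that ${\rm Ran}(A_1)={\rm Ran}(A_2)$ whenever $A_1A_1^*=A_2A_2^*$, or more generally whenever $\|A_1^*\psi\|\sim\|A_2^*\psi\|$. Apply it with $A_1=F_{\#}^{1/2}$ (well defined since $F_{\#}\ge0$) and $A_2=G$. By boundedness and coercivity of $T_{\#}$ on $\overline{{\rm Ran}(G^*)}$,
\begin{equation*}
\|F_{\#}^{1/2}\psi\|^2=\langle G^*\psi,T_{\#}G^*\psi\rangle\simeq\|G^*\psi\|^2_{X^*}.
\end{equation*}
The lemma then gives ${\rm Ran}(F_{\#}^{1/2})={\rm Ran}(G)$. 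Assumption (1), that $G$ is compact with dense range, is what makes the eigenvector argument above legitimate.
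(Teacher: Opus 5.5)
Your argument is the standard Kirsch--Grinberg proof. The paper itself gives no proof; it quotes this theorem from Kirsch--Grinberg and Furuya. The structure you use is the right one: a finite-rank correction giving $|{\rm Re}\,F|=G(T_0+K')G^*$, strict positivity of $T_\#$ on $\overline{{\rm Ran}(G^*)}$ upgraded to coercivity by weak compactness, and then the range lemma. Reading the conclusion as ${\rm Ran}(G)={\rm Ran}(F_\#^{1/2})$ is also correct.

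The one step that fails as written is the reduction to $t=0$. Hypothesis (3) concerns ${\rm Im}\,T$ itself and is not preserved under $T\mapsto e^{{\rm i}t}T$. Indeed ${\rm Im}(e^{{\rm i}t}T)=\cos t\,{\rm Im}\,T+\sin t\,{\rm Re}\,T$, so for $t=\pi/2$ the rotated imaginary part is ${\rm Re}\,T$, on which you have no sign information. Hence the claim ``by (3), $\langle {\rm Im}\,F\psi,\psi\rangle\ge 0$'' for the rotated $F$ is unjustified, and your argument gives no control of $|{\rm Im}(e^{{\rm i}t}F)|$.

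The fix is not to rotate at all:
\begin{itemize}
\item Run your finite-rank argument on ${\rm Re}(e^{{\rm i}t}F)=G\,{\rm Re}(e^{{\rm i}t}T)\,G^*$.
\item Apply (3) directly to the unrotated ${\rm Im}\,F=G({\rm Im}\,T)G^*$.
\end{itemize}
The operator in the conclusion is then $F_\#=|{\rm Re}(e^{{\rm i}t}F)|+|{\rm Im}\,F|=G(T_0+K'+{\rm Im}\,T)G^*$. This is how the cited sources state it, and it coincides with the paper's $|{\rm Re}\,F|+|{\rm Im}\,F|$ for $t\in\{0,\pi\}$; the paper only uses $t=\pi$. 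Everything after that goes through unchanged.

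Also, you do not need ${\rm Im}\,T$ to be compact; your lower-semicontinuity fallback suffices. Assumption (3) makes the ${\rm Im}\,T$-form nonnegative on the closed subspace $\overline{{\rm Ran}(G^*)}$. That nonnegativity is exactly what is used, both to get $\langle\varphi,T_\#\varphi\rangle\le 0$ for the weak limit and to force $\langle\varphi_n,T_0\varphi_n\rangle\to 0$.
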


\subsection{The properties of the far-field operator}
\label{sec:far-field}
Define the \emph{far-field operator} $F_{D}: L^2(\mathbb{S}) \to L^2(\mathbb{S})$ by
\begin{equation*}
F_{D} g(\hat{\bm{ x }}) : = \int_{ \mathbb{S}   } u^{\infty}(\hat{\bm{ x }}, \bm{ d }) \,\,{\rm d}s(\bm{ d })
\quad \hat{\bm{ x }} \in \mathbb{S}.
\end{equation*}

Before applying the range identity in the $(F^*F)^{1/4}$-method, we need a comprehensive investigation for the properties of the operator $F_D$.
For the Dirichlet problem, let us introduce \emph{a clamped transmission eigenvalue problem}:
\begin{equation}
  \label{eq:clamped}
  \begin{aligned}
    \Delta v - \kappa^2 v & = 0   \quad \mathrm{ in } ~ D^c, \\
   \Delta w + \kappa^2 w& = 0   \quad \mathrm{ in } ~ D, \\
  v+w &=  0 \quad \mathrm{ on } ~ \partial D,  \\
  \partial _{\bm{ n }} v +\partial _{\bm{ n }}  w  & = 0  \quad \mathrm{ on } ~ \partial D, 
\end{aligned}
\end{equation}
where $v$ decays exponentially as the argument tends to the infinity.  The wavenumbers $\kappa$ are called the \emph{clamped transmission eigenvalue} if \eqref{eq:clamped} exists a non-trivial solution. The clamped transmission eigenvalue problem has been studied in \cite{harris_transmission_2025,harris_existence_2025} and the set of the associated eigenvalues is discrete under some assumptions. 
For the Neumann problem, we introduce \emph{a free transmission eigenvalue problem}:
\begin{equation}
\label{eq:free}
\begin{aligned}
\Delta v - \kappa^2 v & = 0   \quad \mathrm{ in } ~ D^c, \\
\Delta w + \kappa^2 w& = 0   \quad \mathrm{ in } ~ D, \\
M(v+w) &=  0 \quad \mathrm{ on } ~ \partial D,  \\
N( v + w)  & = 0  \quad \mathrm{ on } ~ \partial D, 
\end{aligned}
\end{equation}
where $v$ decays exponentially as the argument tends to the infinity.  The wavenumbers $\kappa$ are called the \emph{free transmission eigenvalue} if \eqref{eq:free} exists a non-trivial solution. 
Now we can state some properties of the operator $F_D$.

\begin{theorem}\label{thm:far-field}
	Let $F_D$ be the far-field operator corresponding to the Dirichlet or Neumann boundary value problem.
  \begin{enumerate}
  \item[(a)] $F_D$ is compact.
  \item[(b)]  For the Dirichlet problem, $F_D$ is injective   if $\kappa$ is not a clamped transmission eigenvalue. For the Neumann problem, $F_D$ is injective   if $\kappa$ is not a free transmission eigenvalue.
    \item[(c)] $I+ \frac{\rm i}{4\pi} F_{D}$ is unitary.
  \end{enumerate}
\end{theorem}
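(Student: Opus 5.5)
We prove the three assertions separately, relying on the representation formula of Lemma~\ref{lem:rep}, the identity in Lemma~\ref{lem:sign}, and Green's formula \eqref{eq:green-2}. The far-field pattern is linear in the incident field. So for $g\in L^2(\mathbb{S})$, $F_D g$ is the far-field pattern of the scattered field $u^{\rm sc}_g$ produced by the Herglotz incident wave $v_g(\bm x)=\int_{\mathbb S} e^{{\rm i}\kappa \bm x\cdot\bm d}g(\bm d)\,{\rm d}s(\bm d)$.

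\textbf{Part (a).} $F_D$ is an integral operator on $L^2(\mathbb{S})$ with kernel $u^\infty(\hat{\bm x},\bm d)$. By \eqref{eq:far-rep}, this kernel is smooth (indeed analytic) in $\hat{\bm x}$. Well-posedness gives continuous dependence of the Cauchy data $(u,\partial_{\bm n}u,Mu,Nu)$ on the incident field, so the kernel is continuous in $\bm d$ as well. A continuous kernel on $\mathbb S\times\mathbb S$ is Hilbert--Schmidt, hence $F_D$ is compact.

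\textbf{Part (c).} The plan follows the acoustic argument. We first prove the reciprocity-type identity
\begin{equation*}
  4\pi\,\big(\langle F_Dg,h\rangle-\langle g,F_Dh\rangle\big)={\rm i}\,\langle F_Dg,F_Dh\rangle \qquad (g,h\in L^2(\mathbb S)).
\end{equation*}
Unitarity of $I+\frac{\rm i}{4\pi}F_D$ then follows by expanding $(I+\frac{\rm i}{4\pi}F_D)^*(I+\frac{\rm i}{4\pi}F_D)=I$ and using the companion identity $F_D^*F_D=F_DF_D^*$.
\begin{itemize}
\item Let $u_g=v_g+u^{\rm sc}_g$ be the total field. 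Apply \eqref{eq:green-2} to $u_g$ and $\overline{u_h}$ on $B_R\setminus\overline D$. The boundary term on $\partial D$ vanishes, since $B_1u=B_2u=0$ in both the Dirichlet and the Neumann case and the pairing in \eqref{eq:green-2} is exactly $(\partial_{\bm n},I)$ against $(M,N)$.
\item Split the integral over $|\bm x|=R$ into the four bilinear combinations of $v$'s and $u^{\rm sc}$'s, and evaluate each as $R\to\infty$:
\begin{itemize}
\item The $v$--$v$ term vanishes, because both functions are entire and satisfy the equation inside $B_R$.
\item The $u^{\rm sc}$--$\overline{u^{\rm sc}}$ term gives $\frac{\kappa^2}{2\pi}{\rm i}\langle F_Dg,F_Dh\rangle$, up to sign. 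This uses the decomposition $u^{\rm sc}=u_H+u_M$, where $u_H$ solves the Helmholtz equation and $u_M$ solves the modified Helmholtz equation. The $u_M$ part decays exponentially, so only Helmholtz-type asymptotics survive. This is the mechanism behind Lemma~\ref{lem:sign}, and it produces the factor $\kappa^2/(4\pi)$.
\item The mixed terms reduce, via \eqref{eq:far-rep} applied with $e^{-{\rm i}\kappa\hat{\bm x}\cdot\bm y}$, to $-2\kappa^2\langle F_Dg,h\rangle$-type expressions. Here the factor $-\frac{1}{2\kappa^2}$ in \eqref{eq:far-rep} must be tracked.
\end{itemize}
\item Collecting the constants should give exactly the factor $\frac{1}{4\pi}$.
\end{itemize}
The companion identity $F_D^*F_D=F_DF_D^*$ follows in the standard way from reciprocity $u^\infty(\hat{\bm x},\bm d)=u^\infty(-\bm d,-\hat{\bm x})$. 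Reciprocity is itself proved by the same Green's-formula computation with two plane waves.

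\textbf{Part (b).} This is the main obstacle. Suppose $F_Dg=0$. By Rellich's lemma, applied to the Helmholtz component (the modified Helmholtz component has no far field), $u^{\rm sc}_g$ reduces to its exponentially decaying part in $D^c$: $u^{\rm sc}_g=v$ with $\Delta v-\kappa^2v=0$ in $D^c$. Set $w=v_g|_D$, so that $\Delta w+\kappa^2w=0$ in $D$. The boundary condition $B_j(u^{\rm sc}_g+v_g)=0$ is then exactly \eqref{eq:clamped} or \eqref{eq:free}. Because $\kappa$ is not a transmission eigenvalue, $w=0$ in $D$, hence $v_g\equiv0$ by analyticity, and so $g=0$ by injectivity of the Herglotz operator.

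The delicate step is justifying the decomposition $u^{\rm sc}=u_H+u_M$ with $u_H=-\frac{1}{2\kappa^2}(\Delta-\kappa^2)u^{\rm sc}$ and $u_M=\frac{1}{2\kappa^2}(\Delta+\kappa^2)u^{\rm sc}$. We need $u_H$ radiating and $u_M$ exponentially decaying, so that the vanishing far field forces $u_H=0$. I would establish this from the representation \eqref{eq:Green-rep} together with the splitting \eqref{eq:funda-bi} of $G$.
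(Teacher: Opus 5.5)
Your proposal is correct. For (b), and for the key identity $F_D-F_D^*=\frac{\rm i}{4\pi}F_D^*F_D$ in (c), it is essentially the paper's argument.

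In (c) the paper also starts from the vanishing Green pairing of the two total fields on $\partial D$ and splits it into four pieces. It kills the incident--incident piece by Green's formula in $D$. It evaluates the two mixed pieces on $\partial D$ via \eqref{eq:far-rep} as $-2\kappa^2(g,F_Dh)$ and $2\kappa^2(F_Dg,h)$. It moves only the scattered--scattered piece to $|\bm x|=R$, where it tends to $-\frac{{\rm i}\kappa^2}{2\pi}(F_Dg,F_Dh)$. Dividing by $2\kappa^2$ gives exactly your identity, so the constants you left as ``should'' and ``up to sign'' do check out. Splitting on $|\bm x|=R$ instead of $\partial D$ is equivalent by Green's formula.

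In (b), the splitting into radiating and evanescent parts that you flag as delicate is exactly what the paper imports from the Rellich-type lemma of Dong--Li. The paper writes out only the Neumann case and cites the literature for the Dirichlet case and for (a). Your continuous-kernel (Hilbert--Schmidt) argument is a valid self-contained substitute for (a).

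The one genuinely different step is the passage from the identity to unitarity. The paper only derives $\mathcal{L}^*\mathcal{L}=I$ for $\mathcal{L}=I+\frac{\rm i}{4\pi}F_D$ and then invokes (a). Since $\mathcal{L}$ is an injective compact perturbation of the identity, the Fredholm alternative makes it surjective. Hence $\mathcal{L}^*=\mathcal{L}^{-1}$ and $\mathcal{L}\mathcal{L}^*=I$.

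You instead prove normality $F_D^*F_D=F_DF_D^*$ through the reciprocity relation $u^\infty(\hat{\bm x},\bm d)=u^\infty(-\bm d,-\hat{\bm x})$. This relation does hold here, for four reasons:
\begin{itemize}
\item the boundary form in \eqref{eq:green-2} is antisymmetric in the two fields;
\item $\partial_{\bm n}$, $M$, $N$ have real coefficients;
\item the pairing of two radiating solutions vanishes;
\item \eqref{eq:far-rep} writes $u^\infty(\hat{\bm x},\bm d)$ as the pairing of $u^{\rm sc}(\cdot,\bm d)$ with the plane wave of direction $-\hat{\bm x}$.
\end{itemize}
So your argument is sound. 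It costs an extra Green's-formula computation, but it does not need compactness of $F_D$, and it yields normality of $F_D$ as a by-product, which the paper's route does not.
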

\begin{proof}
  We first remark that (a) and the Dirichlet case of (b) have been proved in \cite{harris_direct_2025,harris_sampling_2025} respectively, and omit their proof here.
  
 (b) It suffices to consider the case of the Neumann obstacle. Given $g \in L^2(\mathbb{S} )$, define $v^{\rm in}(\bm{ x })$ by
  \begin{equation*}
v^{\rm in}(\bm{ x })= \int_{ \mathbb{S}   } e^{{\rm i}\kappa \bm{ x }\cdot  \bm{ d }} g(\bm{ d })  \,\,{\rm d} s(\bm{ d }) \quad \bm{ x }\in \mathbb{R}^2.
  \end{equation*}
  Let $v^{\rm sc}$ be the solution to \eqref{eq:bvp-1}-\eqref{eq:bvp-4} with $v^{\rm in}$ as the incident wave, i.e.,
  \begin{align}
    \label{eq:trans-1}
    \Delta^2 v^{\rm sc} -\kappa^4 v^{\rm sc} &=  0 \quad \text{ in } ~ D^c, \\
       \label{eq:trans-2}
    M (v^{\rm sc} + v^{\rm in} )&=   0 
    \quad \text{ on } ~ \partial D,\\
    \label{eq:trans-3}
    N (v^{\rm sc} + v^{\rm in}) &= 0 \quad \text{ on } ~ \partial D,\\
       \label{eq:trans-4}
        \lim\limits_{r=| \bm{ x } |\to\infty}\sqrt{r}\left(\frac{\partial v^{\rm sc}}{\partial r}
        -\text{i}\kappa v^{\rm sc} \right)&=0.
   \end{align}
   By the superposition principle,  the far-field pattern $v^{\infty}$  of $v^{\rm sc}$ is indeed $F_D g$. Hence $F_D g =0 $ immediately yields $v^{\infty}=0$, which together with the Rellich's lemma \cite[Lemma 4.1]{dong_uniqueness_2024} indicates that $v^{\rm sc}_{\rm pr}= -1/(2 \kappa^2)(\Delta v^{\rm sc}-\kappa^2 v^{\rm sc})$ vanishes in $D^c$, i.e., the function $v^{\rm sc}$ satisfies the modified Helmholtz equation in $D^c$ and decays exponentially as $| \bm{ x } |\to \infty$. Besides, note the fact that $v^{\rm in}$ satisfies the Helmholtz equation with wavenumber $\kappa$ in $D$, which combined with the boundary conditions \eqref{eq:trans-2} and  \eqref{eq:trans-3} of $v^{\rm sc}$ and $v^{\rm in}$ shows that $(v^{\rm in},v^{\rm sc})$ satisfies the free transmission eigenvalue problem, i.e.,
\begin{equation*}
  \begin{aligned}
    \Delta v^{\rm sc} - \kappa^2 v^{\rm sc}
    & = 0   \quad \mathrm{ in } ~ D^c, \\
    \Delta v^{\rm in} + \kappa^2 v^{\rm in}
    & = 0   \quad \mathrm{ in } ~ D, \\
    Mv^{\rm in} +M v^{\rm sc}
    &= 0  \quad \mathrm{ on } ~ \partial D,  \\
    N v^{\rm in} + N v^{\rm sc}
    & = 0 \quad \mathrm{ on } ~ \partial D.
\end{aligned}
\end{equation*}
By the assumption of the wavenumber $\kappa$, we have that $v^{\rm in}$ vanishes in $D$ and thus on the whole $\mathbb{R}^2$ due to the analytic continuation principle. Hence $g$ is zero based on Theorem 3.27 in \cite{colton_inverse_2019}, which implies that the second statement of the theorem is proved.

 (c) We begin by asserting that the identity 
\begin{equation}\label{eq:far-identity}
F_D-F^{*}_D-\frac{\rm i}{4\pi}F^{*}_DF_D=0
 \end{equation}
 holds for Dirichlet and Neumann boundary conditions.
 For $g$, $h\in L^2(\mathbb{S})$, let $v_g$ and $w_h$ are defined by
 \begin{equation*}
 v_g(\bm{ x })= \int_{ \mathbb{S}   } e^{{\rm i}\kappa \bm{ x }\cdot  \bm{ d }} g(\bm{ d })  \,\,{\rm d} s(\bm{ d }) \quad\text{and}\quad
 w_h(\bm{ x })= \int_{ \mathbb{S}   } e^{{\rm i}\kappa \bm{ x }\cdot  \bm{ d }} h(\bm{ d })  \,\,{\rm d} s(\bm{ d }) \quad
  \bm{ x }\in \mathbb{R}^2.
 \end{equation*}
Let $v$ and $w$ are the associated scattered fields of the Dirichlet or Neumann biharmonic scattering problem in $D^c$. We use  the boundary conditions of $v+v_g$ and  $w+w_h$ to get
 \begin{align*}
 0= &\int_{\partial D}\Big\{ 
 [ (v+v_g)\, N (\overline{w+w_h})
 + \partial_{\bm{n}} (v+v_g) \,  M (\overline{w+w_h}) ] \\
&\qquad\qquad   - [ N (v+v_g) \,    (\overline{w+w_h})
 + M (v+v_g)\,   \partial_{\bm{n}} (\overline{w+w_h})] 
 \Big\} \;\mathrm{d}s \\
 = &\int_{\partial D}\Big\{ 
 [ v\, N \overline{w}
 + \partial_{\bm{n}} v \,  M \overline{w} ]   - [ N v \,    \overline{w}
 + M v\,   \partial_{\bm{n}} \overline{w}] 
 \Big\} \;\mathrm{d}s \\
&+ \int_{\partial D}\Big\{ 
[ v_g\, N \overline{w}
+ \partial_{\bm{n}} v_g \,  M \overline{w} ]   - [ N v_g \,    \overline{w}
+ M v_g\,   \partial_{\bm{n}} \overline{w}] 
\Big\} \;\mathrm{d}s \\
& + \int_{\partial D}\Big\{ 
 [ v\, N \overline{w_h}
 + \partial_{\bm{n}} v \,  M \overline{w_h} ]    - [ N v \,    \overline{w_h}
 + M v\,   \partial_{\bm{n}} w_h] 
 \Big\} \;\mathrm{d}s  \\
 & + \int_{\partial D}\Big\{ 
 [ v_g\, N \overline{w_h}
 + \partial_{\bm{n}} v_g \,  M \overline{w_h} ]    - [ N v_g \,    \overline{w_h}
 + M v_g\,   \partial_{\bm{n}} \overline{w_h}] 
 \Big\} \;\mathrm{d}s 
 = I_1+ I_2 + I_3 +I_4.
 \end{align*}
Let $R>0$ be a sufficiently large constant such that $\overline{D}\subset B(\bm{0},R)$.
It follows from the Green formula in $B(\bm{0},R)\setminus\overline{D}$ that 
$I_1$ is unchanged if the $\partial D$ in the integral is replaced by $\partial B(\bm{0},R)$.
Note that from the Green representation \eqref{eq:Green-rep} of the radiation solution, we have
the following asymptotic behaviors
\begin{align*}
&[ v(\bm{x})\, N_{\bm{x}}  \overline{w(\bm{x})}
+ \partial_{\bm{n(\bm{x})}} v(\bm{x}) \,  M_{\bm{x}}  \overline{w(\bm{x})} ]   
- [ N_{\bm{x}}  v(\bm{x}) \,    \overline{w(\bm{x})}
+ M_{\bm{x}} v(\bm{x})\,   \partial_{\bm{n}(\bm{x})} \overline{w(\bm{x})}] \\
=& - \frac{ {\rm i}\kappa^2}{2\pi}v^{\infty}(\hat{\bm{x}})\overline{w^{\infty}(\hat{\bm{x}})}
 +  O\Big(\frac{1}{|{\bm x}|^{3/2}}\Big)
\end{align*}
 The representation of the far-field, i.e., Lemma \ref{lem:rep},
shows that 
\begin{equation*}
I_2 = -2 \kappa^2(g,w^{\infty})_{L^2(\mathbb{S})}\quad 
\text{and}
\quad I_3 = 2 \kappa^2(v^{\infty}, h)_{L^2(\mathbb{S})}
\end{equation*}
 Using the Green formula in $D$, we have $I_4=0$. Letting $R$ tends to infinity, we prove the assertion.
 
 Define $\mathcal{L}=I + \frac{\rm i}{4\pi} F_D$. 
 By \eqref{eq:far-identity}  we obtain that
\begin{align*}
  (g,\mathcal{L}^{*}\mathcal{L} h)_{L^2(\mathbb{S}) }
  &= (g,h)_{L^2(\mathbb{S}) }
  + \frac{\rm i}{4\pi}(g,F_Dh)_{L^2(\mathbb{S}) }
  - \frac{\rm i}{4\pi} (F_Dg,h)_{L^2(\mathbb{S}) }
  + \frac{1}{16\pi^2} (F_Dg,F_Dh)_{L^2(\mathbb{S}) }\\
  &=(g,h)_{L^2(\mathbb{S}) }
\end{align*}
holds for all $g,h \in L^{2}(\mathbb{S} )$, where $\mathcal{L}^{*}$ is the adjoint operator of $\mathcal{L}$. Then $\mathcal{L}^{*}\mathcal{L}=I$, which combined with (a) and the Fredholm alternative indicates that $\mathcal{L}\mathcal{L}^{*}=I$ and $\mathcal{L}$ is unitary.
\end{proof}

\subsection{The case of Dirichlet obstacle}
In this subsection, we consider  the biharmonic scattering in the Dirichlet case. To this end, we  introduce the \emph{data-to-pattern operator} $\mathcal{B}_{{\rm Dir},D}$: $H^{3/2}(\partial D)\times H^{1/2}(\partial D)\to L^{2}(\mathbb{S} )$ such that for $(f_1,f_2)^T\in H^{3/2}(\partial D)\times H^{1/2}(\partial D)$,
$
\mathcal{B}_{{\rm Dir},D}(f_1,f_2)^T=v^{\infty},
$
where $v^{\infty}$ is the far-field pattern of $v$ and $v$ is the solution of the exterior Dirichlet boundary value problem  
\begin{align}
\label{eq:gbvp-d-1}
\Delta^2 v- \kappa^4v &= 0 \quad \mathrm{ in}\; D^{c},\\
\label{eq:gbvp-d-2}
( v, \partial_{\bm{ n }} v)&= (f_1,f_2)  \quad \mathrm{on}\; \partial D,\\
\label{eq:gbvp-d-3}
\lim\limits_{r\to\infty}\sqrt{r}\left(\frac{\partial v}{\partial r}
-\mathrm{i}\kappa v \right)&=0,\quad r=| \bm{ x } |.
\end{align}
Obviously, the operator $\mathcal{B}_{{\rm Dir},D}$ is well-defined according to theorem 2.1 in \cite{bourgeois_well_2020}.
Then we investigate the properties of the operator  $\mathcal{B}_{{\rm Dir},D}$.
\begin{theorem} \label{thm:solution-operator}
The operator $\mathcal{B}_{{\rm Dir},D}$: $H^{3/2}(\partial D)\times H^{1/2}(\partial D)\to L^{2}(\mathbb{S} )$ is compact with dense range.
\end{theorem}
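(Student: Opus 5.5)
Compactness and dense range are handled separately, following the acoustic template.

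\emph{Compactness.} Fix $R>0$ with $\overline{D}\subset B(\bm{0},R)$ and set $\Gamma_R=\partial B(\bm{0},R)$. By the well-posedness result (Theorem 2.1 in \cite{bourgeois_well_2020}), the solution map $(f_1,f_2)^T\mapsto v$ is bounded from $H^{3/2}(\partial D)\times H^{1/2}(\partial D)$ into $H^2(B(\bm{0},R+1)\setminus\overline{D})$. Interior elliptic regularity for $\Delta^2-\kappa^4$ on a neighbourhood of $\Gamma_R$ away from $\partial D$ then bounds the Cauchy data $(v,\partial_{\bm n}v,Mv,Nv)$ on $\Gamma_R$ in any Sobolev norm, say in $H^{2}(\Gamma_R)^4$. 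Applying the far-field representation \eqref{eq:far-rep} of Lemma \ref{lem:rep} on $\Gamma_R$ instead of $\partial D$ writes $v^\infty$ as an integral operator whose kernels $e^{-{\rm i}\kappa\hat{\bm x}\cdot\bm y}$, $\partial_{\bm n}e^{-{\rm i}\kappa\hat{\bm x}\cdot\bm y}$, $N_{\bm y}e^{-{\rm i}\kappa\hat{\bm x}\cdot\bm y}$ and $M_{\bm y}e^{-{\rm i}\kappa\hat{\bm x}\cdot\bm y}$ are smooth on $\mathbb{S}\times\Gamma_R$. These act on $L^2(\Gamma_R)$ data, so the operator is compact into $L^2(\mathbb{S})$. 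Alternatively, one can use the compact embedding $H^2(\Gamma_R)\hookrightarrow L^2(\Gamma_R)$. Composing with the bounded solution map gives compactness of $\mathcal{B}_{{\rm Dir},D}$.

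\emph{Dense range.} I show that the adjoint is injective, equivalently that $h\in L^2(\mathbb{S})$ with $(\mathcal{B}_{{\rm Dir},D}(f_1,f_2)^T,h)_{L^2(\mathbb{S})}=0$ for all data must vanish. Choose the data $(f_1,f_2)=(G(\cdot,\bm z)\text{-type terms})$ built from superpositions of plane waves. Concretely, for $\bm d\in\mathbb{S}$ let $f_1=-e^{{\rm i}\kappa \bm x\cdot\bm d}$ and $f_2=-\partial_{\bm n}e^{{\rm i}\kappa\bm x\cdot\bm d}$, both smooth on $\partial D$. The corresponding $v$ is the Dirichlet scattered field, with $v^\infty=u^\infty(\cdot,\bm d)$. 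Orthogonality then gives $\int_{\mathbb{S}}u^\infty(\hat{\bm x},\bm d)\overline{h(\hat{\bm x})}\,{\rm d}s(\hat{\bm x})=0$ for all $\bm d$, that is, $F_D^*h=0$.

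This route uses injectivity of $F_D^*$, which holds only off clamped transmission eigenvalues (Theorem \ref{thm:far-field}(b) together with (c): since $F_D^*=-\tfrac{4\pi}{\rm i}\ldots$, unitarity of $I+\frac{\rm i}{4\pi}F_D$ relates the kernels of $F_D$ and $F_D^*$). Because the statement is unconditional, I would instead use a larger, $\kappa$-free family of data.

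For $\bm z\in D$, take $f_1=G(\cdot,\bm z)|_{\partial D}$ and $f_2=\partial_{\bm n}G(\cdot,\bm z)|_{\partial D}$. The solution is $v=G(\cdot,\bm z)$ by uniqueness, and \eqref{eq:asy-G} gives $v^\infty=-\frac{1}{2\kappa^2}e^{-{\rm i}\kappa\hat{\bm x}\cdot\bm z}$. Orthogonality yields
\[
w(\bm z):=\int_{\mathbb{S}}e^{-{\rm i}\kappa\hat{\bm x}\cdot\bm z}\,\overline{h(\hat{\bm x})}\,{\rm d}s(\hat{\bm x})=0\quad\text{for all }\bm z\in D.
\]
The function $w$ is a Herglotz wave function, hence entire. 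It therefore vanishes on $\mathbb{R}^2$ by analyticity, and Theorem 3.27 in \cite{colton_inverse_2019} gives $h=0$.

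The only subtle point is justifying that $G(\cdot,\bm z)$ is the unique radiating solution with its own Dirichlet traces. This holds because $G(\cdot,\bm z)$ is smooth in $D^c$ for $\bm z\in D$ and radiating in the sense \eqref{eq:bvp-4}, and the Dirichlet problem is uniquely solvable for all $\kappa>0$. The main obstacle is therefore only checking that the radiation-condition framework of \cite{bourgeois_well_2020} covers $G$, including its exponentially decaying $\Phi_{{\rm i}\kappa}$ part, which is immediate.
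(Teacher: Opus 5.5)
Your proof is correct. The compactness part is the same as the paper's: factor through the Cauchy data $(v,\partial_{\bm n}v,Mv,Nv)$ on $\partial B(\bm 0,R)$, use interior regularity, and apply the far-field representation there, which has smooth kernels. The dense-range part takes a genuinely different route.

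\textbf{The paper's route.} The paper identifies the adjoint explicitly as $\mathcal{B}_{{\rm Dir},D}^{*}g=\frac{1}{2\kappa^2}(N\overline{U},M\overline{U})^T|_{\partial D}$. Here $U$ is the total Dirichlet field for the conjugate Herglotz incident wave $U^{\rm in}$. It then proves injectivity as follows: $U=\partial_{\bm n}U=0$ together with $NU=MU=0$ on $\partial D$ gives $U\equiv 0$ in $D^c$ by Holmgren's theorem. So $U^{\rm in}$ is an entire radiating Helmholtz solution, hence zero, and Theorem 3.27 of \cite{colton_inverse_2019} gives $g=0$.

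\textbf{Your route.} You observe that $-2\kappa^2G(\cdot,\bm z)$ with $\bm z\in D$ is the radiating solution carrying its own Dirichlet traces, so $\phi_{\bm z}\in{\rm Ran}(\mathcal{B}_{{\rm Dir},D})$. This is exactly the observation the paper uses later in the first half of Theorem \ref{thm:solution-range}. Orthogonality of $h$ to every $\phi_{\bm z}$ with $\bm z$ in the open set $D$, combined with real-analyticity of Herglotz wave functions and injectivity of the Herglotz operator, then forces $h=0$.

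\textbf{Comparison.} Your argument is shorter and more elementary. It needs no identification of the adjoint and no Holmgren or unique continuation for the fourth-order operator. It uses only uniqueness of the exterior Dirichlet problem, valid for all $\kappa>0$, and the elementary check that $G(\cdot,\bm z)$ is radiating: its propagating part is $-\frac{1}{2\kappa^2}\Phi_\kappa(\cdot,\bm z)$ and its evanescent part $\frac{1}{2\kappa^2}\Phi_{{\rm i}\kappa}(\cdot,\bm z)$ decays exponentially. It also carries over directly to the Neumann case with data $(NG(\cdot,\bm z),MG(\cdot,\bm z))^T$ under the standing assumption $\kappa\notin\mathcal{K}_0$. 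The paper's route, in exchange, yields an explicit formula for $\mathcal{B}_{{\rm Dir},D}^{*}$ in terms of Cauchy data of a total field, mirroring the acoustic theory.

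\textbf{Presentation.} The abandoned plane-wave detour should simply be deleted. It contains a garbled sentence mixing the two routes and an unfinished expression for $F_D^{*}$, and the final argument does not depend on it.
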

\begin{proof}
  The compactness of $\mathcal{B}_{{\rm Dir},D}$ is analogous to proof of Lemma 1.13 in \cite{kirsch_factorization_2008}. 
  Choose a sufficiently large $R>0$ such that $\overline{D}\subset B(\bm{ 0 },R)$. 
  Define the operators $\mathcal{B}_{D,1}:H^{3/2}(\partial D)\times H^{1/2}(\partial D)\to (C(\partial B(\bm{ 0 },R) ))^4$ and $\mathcal{B}_{D,2}:(C(\partial B(\bm{ 0 },R) ))^4 \to L^2(\mathbb{S} )$ by
  \begin{align*}
    \mathcal{B}_{D,1}(f_1,f_2)^T = (v|_{\partial B(\bm{ 0 },R) },\partial _{\bm{ n }} v|_{\partial B(\bm{ 0 },R) }, Mv|_{\partial B(\bm{ 0 },R) }, Nv|_{\partial B(\bm{ 0 },R) })^T
\end{align*}
 and
\begin{align*}
    \mathcal{B}_{D,2}(g_1,g_2,g_3,g_4)^T &=
     -\frac{1}{2\kappa^{2}}
                \int_{\partial D}\Big\{
                  [  e^{-\mathrm{i}\kappa\hat{\bm x}\cdot \bm{y}}g_4(\bm{y})
                           + (\partial_{\bm{n}(\bm{y})}e^{-\mathrm{i}\kappa\hat{\bm x}\cdot \bm{y}})  g_3(\bm{y}) ] \\
             & \qquad\qquad\quad   - [ (N_{\bm{y}}e^{-\mathrm{i}\kappa\hat{\bm x}\cdot \bm{y}}) g_1(\bm{y})
                    + (M_{\bm{y}}e^{-\mathrm{i}\kappa\hat{\bm x}\cdot \bm{y}})  g_2(\bm{y}) ]
                \Big\} \;\mathrm{d}s(\bm{y}) \quad \hat{\bm x}\in \mathbb{S}
  \end{align*}
respectively. It is obvious that $\mathcal{B}_{{\rm Dir},D}= \mathcal{B}_{D,2} \mathcal{B}_{D,1}$ holds. By the interior regularity of the elliptic equation, the operator $\mathcal{B}_{D,1}$ is compact and thus  $\mathcal{B}_{{\rm Dir},D}$ is compact.
  
  Next, we prove the denseness of the range of $\mathcal{B}_{{\rm Dir},D}$ in $L^2(\mathbb{S} )$. 
  We claim that the adjoint $\mathcal{B}_{{\rm Dir},D}^{*}$ is given by
  \begin{equation}
    \label{eq:op-b-adjoint}
    \mathcal{B}_{{\rm Dir},D}^{*} g = \frac{1}{2\kappa^{2}}
    \begin{pmatrix}
  N \overline{U},\\
  M \overline{U}
\end{pmatrix}\Big|_{\partial D},
\end{equation}
where  $U=U^{\rm in}+U^{\rm sc}$,  $$U^{\rm in}(\bm{ x })=\int_{ \mathbb{S}   } e^{-{\rm i}\kappa \bm{ x } \cdot \bm{ d }}\overline{g(\bm{ d })} \,\,{\rm d} s(\bm{ d }) \quad \bm{ x }\in \mathbb{R}^2,$$  and $U^{\rm sc}$ be the solution to \eqref{eq:bvp-1}-\eqref{eq:bvp-4} with  $(f_1,f_2)=-(U^{\rm in}|_{\partial D},\partial _{\bm{ n }}U^{\rm in}|_{\partial D})$.
For any $(f_1,f_2)^T \in H^{3/2}(\partial D)\times H^{1/2}(\partial D)$, suppose that the operator $\mathcal{B}_{{\rm Dir},D}$ maps $(f_1,f_2)^T$ to the far-field pattern $v^{\infty}$, where  $v$ is the radiating solution of \eqref{eq:bvp-1} with $(f_1,f_2)^T$ as the  boundary data.
From the representation of the far-field pattern, i.e., Lemma \ref{lem:rep}, we obtain
\begin{align*}
 v^{\infty}(\hat{\bm x}) = & -\frac{1}{2\kappa^{2}}
                \int_{\partial D}\Big\{
                  [  e^{-\mathrm{i}\kappa\hat{\bm x}\cdot \bm{y}}N v(\bm{y})
                           + (\partial_{\bm{n}(\bm{y})}e^{-\mathrm{i}\kappa\hat{\bm x}\cdot \bm{y}})  M v(\bm{y}) ] \\
             & \qquad\qquad\quad   - [ (N_{\bm{y}}e^{-\mathrm{i}\kappa\hat{\bm x}\cdot \bm{y}}) v(\bm{y})
                    + (M_{\bm{y}}e^{-\mathrm{i}\kappa\hat{\bm x}\cdot \bm{y}})  \partial_{\bm{n}} v(\bm{y}) ]
                \Big\} \;\mathrm{d}s(\bm{y}) \quad \hat{\bm x}\in \mathbb{S}.
\end{align*}
This, the Dirichlet boundary conditions of both $U$ and $v$,  and changing the order of integration yield
\begin{align*}
  &(\mathcal{B}_{{\rm Dir},D}(f_1, f_2)^T,g )_{L^2(\mathbb{S} ^{2})}
  =  \int_{ \mathbb{S}   } v^{\infty}(\hat{\bm{ x }}) \overline{g(\hat{\bm{ x }} )}
      \,\,{\rm d} s(\hat{\bm{ x }} )  \\
  = & -\frac{1}{2\kappa^{2}}  \int_{\partial D}\Big\{
                  [ U^{\rm in}(\bm{ y })\, N v(\bm{y})
                           + \partial_{\bm{n}(\bm{y})} U^{\rm in}(\bm{ y })\,  M v(\bm{y}) ] \\
             & \qquad\qquad\quad   - [ N_{\bm{y}} U^{\rm in}(\bm{ y }) \,   v(\bm{y})
                    + M_{\bm{y}} U^{\rm in}(\bm{ y })\,   \partial_{\bm{n}} v(\bm{y}) ]
               \Big\} \;\mathrm{d}s(\bm{y}) \\
  = & -\frac{1}{2\kappa^{2}}  \int_{\partial D}\Big\{
                  [( U^{\rm in}(\bm{ y }) + U^{\rm sc}(\bm{ y }) )\,  N v(\bm{y})
                           + \partial_{\bm{n}(\bm{y})} (U^{\rm in}(\bm{ y })  + U^{\rm sc}(\bm{ y }) ) \,  M v(\bm{y}) ] \\
             & \qquad\qquad\quad   - [ N_{\bm{y}} (U^{\rm in}(\bm{ y })  + U^{\rm sc}(\bm{ y }) )     \, v(\bm{y})
                    + M_{\bm{y}}(U^{\rm in}(\bm{ y }) + U^{\rm sc}(\bm{ y }) ) \,   \partial_{\bm{n}} v(\bm{y}) ]
                 \Big\} \;\mathrm{d}s(\bm{y}) \\
  = &  \left \langle\begin{pmatrix}f_1\\f_2 \end{pmatrix} ,
      \frac{1}{2\kappa^{2}}  \begin{pmatrix}
  N \overline{U},\\
  M \overline{U}
  \end{pmatrix}\right\rangle
      := \left \langle\begin{pmatrix}f_1\\f_2 \end{pmatrix} ,  \mathcal{B}_{{\rm Dir},D}^{*} g\right \rangle,
\end{align*}
where $\langle \cdot ,\cdot \rangle$ is the dual product between $H^{3/2}\times H^{1/2}$ and $H^{-3/2}\times H^{-1/2}$, and  we use the following fact that
\begin{align*}
\int_{\partial D}\Big\{ &
                  [ U^{\rm sc}(\bm{ y })\, N v(\bm{y})
                           + \partial_{\bm{n}(\bm{y})} U^{\rm sc}(\bm{ y })\,  M v(\bm{y}) ] \\
             &   - [ N_{\bm{y}} U^{\rm sc}(\bm{ y }) \,    v(\bm{y})
                    + M_{\bm{y}} U^{\rm sc}(\bm{ y })\,   \partial_{\bm{n}} v(\bm{y}) ]
               \Big\} \;\mathrm{d}s(\bm{y}) =0
\end{align*}
because both $U^{\rm sc}$ and $v$ satisfy the Sommerfeld radiation. Then we arrive at \eqref{eq:op-b-adjoint}.

To prove the denseness of ${\rm Ran} (\mathcal{B}_{{\rm Dir},D})$, it suffices to prove the injectivity of $\mathcal{B}_{{\rm Dir},D}^{*}$. To this end,
let $\mathcal{B}_{{\rm Dir},D}
^{*}g=0$ for some $g \in L^{2}(\mathbb{S} )$. From \eqref{eq:op-b-adjoint}, it is easy to see that $U|_{\partial D}=\partial_{\bm{ n }}U|_{\partial D}=MU|_{\partial D}=NU|_{\partial D}=0$, which together with the Holmgren's uniqueness theorem shows that $U$ vanishes in $\mathbb{R}^2\setminus\overline{D}$. Thus $U^{\rm in}$ satisfies the Sommerfeld radiation condition. Since  $U^{\rm in}$ satisfies  the Helmholtz equation in $\mathbb{R}^2$,  $U^{\rm in}$ vanishes in $\mathbb{R}^2$. According to \cite[Theorem 3.27]{colton_inverse_2019}, we have $g$ vanishes, which completes the proof.
\end{proof}

To relate the far-field operator $F_D$ and the data-to-pattern operator $\mathcal{B}_{{\rm Dir},D}$, we introduce the  \emph{data-to-obstacle operator} $\mathcal{H}_{{\rm Dir},D}:L^2(\mathbb{S} )\to H^{3/2}(\partial D)\times H^{1/2}(\partial D)$ such that for $\bm{ x }\in \partial D$
\begin{equation}
  \label{eq:op-incident}
  \mathcal{H}_{{\rm Dir},D} g(\bm{ x } ) :=
  \begin{pmatrix}    \displaystyle
     \int_{ \mathbb{S}   } e^{{\rm i}\kappa \bm{ x }\cdot \bm{ d }} g(\bm{ d }) \,\,{\rm d} s(\bm{ d })\\[5mm]
    \displaystyle
    \frac{\partial }{ \partial \bm {n } (\bm{ x }) }\int_{ \mathbb{S}   } e^{{\rm i}\kappa \bm{ x }\cdot \bm{ d }} g(\bm{ d }) \,\,{\rm d} s(\bm{ d })
  \end{pmatrix}.
\end{equation}
By the virtue of the definitions of $F_D$, $\mathcal{B}_{{\rm Dir},D}$, and $\mathcal{H}_{{\rm Dir},D}$, the superposition principle implies $F_D=-\mathcal{B}_{{\rm Dir},D} \mathcal{H}_{{\rm Dir},D}$. To get the further factorization of $F_D$, we introduce the mapping $\mathcal{S}_{{\rm Dir},D}:H^{-3/2}(\partial D)\times H^{-1/2}(\partial D)\to H^{3/2}(\partial D)\times H^{1/2}(\partial D)$ by
\begin{equation}
  \label{eq:op-s}
  \displaystyle
 \mathcal{S}_{{\rm Dir},D}\begin{pmatrix} \tau \\ \sigma \end{pmatrix}
  := \begin{pmatrix}    \displaystyle
 \int_{ \partial D  } \left( G(\bm{ x },\bm{ y }) \tau(\bm{ y }) + \dfrac{\partial G(\bm{ x },\bm{ y })}{\partial \bm{ n }(\bm{ y})} \sigma(\bm{ y })\right) \,\,{\rm d}  s(\bm{ y }) \\[5mm]
    \displaystyle
    \dfrac{\partial }{\partial \bm{ n }(\bm{ x })}
     \int_{ \partial D  } \left( G(\bm{ x },\bm{ y }) \tau(\bm{ y }) + \dfrac{\partial G(\bm{ x },\bm{ y })}{\partial \bm{ n }(\bm{ y})} \sigma(\bm{ y })\right) \,\,{\rm d} s(\bm{ y })
  \end{pmatrix}
  \quad \bm{ x }\in  \partial D,
\end{equation}
which is a bounded operator in the light of the potential theory of the biharmonic equation.

Now we are at a position to get the symmetric factorization of the far-field operator.
\begin{theorem}[The symmetric factorization of $F_D$]
  The factorization
  \begin{equation}\label{eq:far-field-factorization}
    F_D= 2 \kappa^2 \mathcal{B}_{{\rm Dir},D} \mathcal{S}_{{\rm Dir},D}^{*} \mathcal{B}_{{\rm Dir},D}^{*}
  \end{equation}
  holds.
\end{theorem}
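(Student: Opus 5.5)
Since $F_D=-\mathcal{B}_{{\rm Dir},D}\mathcal{H}_{{\rm Dir},D}$ is already available, the plan is to show
\begin{equation*}
\mathcal{H}_{{\rm Dir},D}=-2\kappa^2\,\mathcal{S}_{{\rm Dir},D}^{*}\mathcal{B}_{{\rm Dir},D}^{*},
\end{equation*}
and then substitute. I would do this in two steps. First, identify $\mathcal{H}_{{\rm Dir},D}^{*}$ as a far-field operator built from $\mathcal{S}_{{\rm Dir},D}$. Second, factor that operator through $\mathcal{B}_{{\rm Dir},D}$.

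\textbf{Step 1: the adjoint of $\mathcal{H}_{{\rm Dir},D}$.} Let $(\tau,\sigma)\in H^{-3/2}(\partial D)\times H^{-1/2}(\partial D)$ and $g\in L^2(\mathbb{S})$. Write out $\langle \mathcal{H}_{{\rm Dir},D} g, (\tau,\sigma)\rangle$ and interchange the order of integration. This gives
\begin{equation*}
\mathcal{H}_{{\rm Dir},D}^{*}(\tau,\sigma)^T(\bm{d})=\int_{\partial D}\Big( e^{-{\rm i}\kappa \bm{y}\cdot\bm{d}}\tau(\bm{y})+\partial_{\bm{n}(\bm{y})}e^{-{\rm i}\kappa\bm{y}\cdot\bm{d}}\,\sigma(\bm{y})\Big)\,{\rm d}s(\bm{y}),
\end{equation*}
understood in the sesquilinear duality, with conjugates placed as in the paper's convention. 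By the asymptotics \eqref{eq:asy-G}, this equals $-2\kappa^2$ times the far-field pattern of the combined potential
\begin{equation*}
w(\bm{x})=\int_{\partial D}\big(G(\bm{x},\bm{y})\tau(\bm{y})+\partial_{\bm{n}(\bm{y})}G(\bm{x},\bm{y})\sigma(\bm{y})\big)\,{\rm d}s(\bm{y}).
\end{equation*}
The function $w$ is a radiating solution of the biharmonic equation in $D^c$. Its Dirichlet data on $\partial D$ are exactly $\mathcal{S}_{{\rm Dir},D}(\tau,\sigma)^T$, using the continuity of the single- and double-layer-type traces involved. By uniqueness of the exterior Dirichlet problem, its far field is $\mathcal{B}_{{\rm Dir},D}\mathcal{S}_{{\rm Dir},D}(\tau,\sigma)^T$. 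Hence
\begin{equation*}
\mathcal{H}_{{\rm Dir},D}^{*}=-2\kappa^2\,\mathcal{B}_{{\rm Dir},D}\mathcal{S}_{{\rm Dir},D}.
\end{equation*}

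\textbf{Step 2: take adjoints and substitute.} Taking adjoints gives $\mathcal{H}_{{\rm Dir},D}=-2\kappa^2\mathcal{S}_{{\rm Dir},D}^{*}\mathcal{B}_{{\rm Dir},D}^{*}$. Therefore
\begin{equation*}
F_D=-\mathcal{B}_{{\rm Dir},D}\mathcal{H}_{{\rm Dir},D}=2\kappa^2\mathcal{B}_{{\rm Dir},D}\mathcal{S}_{{\rm Dir},D}^{*}\mathcal{B}_{{\rm Dir},D}^{*},
\end{equation*}
which is the claimed factorization.

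\textbf{Main obstacle.} The delicate point is the bookkeeping of dualities and conjugations. $\mathcal{B}_{{\rm Dir},D}^{*}$ was computed in \eqref{eq:op-b-adjoint} using a dual product whose conjugation convention must match the one used for $\mathcal{H}_{{\rm Dir},D}^{*}$ and $\mathcal{S}_{{\rm Dir},D}^{*}$. Otherwise the constant $2\kappa^2$ may pick up a sign or a complex conjugate. I would fix the sesquilinear pairing between $H^{3/2}\times H^{1/2}$ and $H^{-3/2}\times H^{-1/2}$ at the outset, extending the $L^2(\partial D)$ inner product. I would then check the constant against \eqref{eq:asy-G}: the far-field factor of $G(\cdot,\bm{y})$ is $-\frac{1}{2\kappa^2}e^{-{\rm i}\kappa\hat{\bm{x}}\cdot\bm{y}}$. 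That factor yields exactly the $-2\kappa^2$ above, and combined with the minus sign in $F_D=-\mathcal{B}\mathcal{H}$ it gives $+2\kappa^2$.

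A secondary check is that the jump relations do not perturb the traces of $w$ on $\partial D$. The kernels $G$ and $\partial_{\bm{n}(\bm{y})}G$ are smooth enough, since $G$ behaves like $|\bm{x}-\bm{y}|^2\log|\bm{x}-\bm{y}|$. So $w$ and $\partial_{\bm{n}}w$ are continuous across $\partial D$, and $w\in H^2_{\rm loc}$ has the traces given by $\mathcal{S}_{{\rm Dir},D}$.
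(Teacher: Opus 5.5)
Your proposal is correct and is essentially the paper's own proof. You identify $\mathcal{H}_{{\rm Dir},D}^{*}(\tau,\sigma)^T$ as $-2\kappa^2$ times the far-field pattern of the combined potential $\int_{\partial D}(G\tau+\partial_{\bm{n}(\bm{y})}G\,\sigma)\,{\rm d}s(\bm{y})$, whose Dirichlet traces are $\mathcal{S}_{{\rm Dir},D}(\tau,\sigma)^T$; this gives $\mathcal{H}_{{\rm Dir},D}^{*}=-2\kappa^2\mathcal{B}_{{\rm Dir},D}\mathcal{S}_{{\rm Dir},D}$, and you then take adjoints and substitute into $F_D=-\mathcal{B}_{{\rm Dir},D}\mathcal{H}_{{\rm Dir},D}$, exactly as the paper does.
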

\begin{proof}
  It is clear that $\mathcal{H}_{{\rm Dir},D}^{*}:H^{-3/2}\times H^{-1/2} \to L^2(\mathbb{S})$ is given by
  \begin{equation*}
    \mathcal{H}_{{\rm Dir},D}^{*} \begin{pmatrix} \tau\\
      \sigma \end{pmatrix} (\hat{\bm{ x }}) 
    = \int_{ \partial D  } \left( e^{-{\rm i}\kappa \hat{\bm{ x }}\cdot \bm{ y }}  \tau(\bm{ y }) + \dfrac{\partial e^{-{\rm i}\kappa \hat{\bm{ x }}\cdot \bm{ y }} }{\partial \bm{ n }(\bm{ y})} \sigma(\bm{ y })\right) \,\,{\rm d}  s(\bm{ y })   \quad \hat{\bm{ x }} \in \mathbb{S} .
  \end{equation*}
  Define the function
  \begin{equation*}
v(\bm{ x }) = -2 \kappa^2\int_{ \partial D  } \left( G(\bm{ x },\bm{ y }) \tau(\bm{ y }) + \dfrac{\partial G(\bm{ x },\bm{ y })}{\partial \bm{ n }(\bm{ y})} \sigma(\bm{ y })\right) \,\,{\rm d}  s(\bm{ y }) \quad \bm{ x } \in  D^c.
  \end{equation*}
  It follows from the asymptotic behavior \eqref{eq:asy-G} of $G$ that the far-field pattern $v^{\infty}$ of $v$ is just $\mathcal{H}_{{\rm Dir},D}^{*} ( \tau, \sigma )^T$. It is easy to see that
  \begin{equation*}
\mathcal{H}_{{\rm Dir},D}^{*} ( \tau, \sigma )^T = \mathcal{B}_{{\rm Dir},D} (v|_{\partial D},\partial _{\bm{ n }}v|_{\partial D})^T= -2 \kappa^2 \mathcal{B}_{{\rm Dir},D} \mathcal{S}_{{\rm Dir},D} ( \tau, \sigma )^T.
\end{equation*}
Thus, $\mathcal{H}_{{\rm Dir},D}^{*}=-2 \kappa^2 \mathcal{B}_{{\rm Dir},D} \mathcal{S}_{{\rm Dir},D}$ 
 and $\mathcal{H}_{{\rm Dir},D}=-2 \kappa^2\mathcal{S}_{{\rm Dir},D}^{*} \mathcal{B}_{{\rm Dir},D}^{*}$, which leads to the desired result by $F_D=-\mathcal{B}_{{\rm Dir},D}\mathcal{H}_{{\rm Dir},D}$.
\end{proof}
 To explore  deeper properties of $\mathcal{B}_{{\rm Dir},D}$, we introduce a Dirichlet-to-Neumann operator $\Lambda_e:H^{3/2}(\partial D)\to H^{1/2}(\partial D) $, which maps $f \in H^{3/2}(\partial D)$ in to Neumann trace $\partial _{\bm{ n }}v$, where $v \in H^2(D^c)$ is the unique solution to exterior Dirichlet boundary value problem
\begin{align}
  \label{eq:yukawa}
  \Delta v -\kappa^2 v  &=  0 \quad \mathrm{ in } ~ D^c, \\
  \label{eq:yukawa-b}
 v &=  f  \quad \mathrm{ on } ~ \partial D.
\end{align}
It is clear that the operator $\Lambda_e$ is bounded from $H^{3/2}(\partial D)$ to $H^{1/2}(\partial D)$ with the potential theory of the modified Helmholtz equation (or the Yukawa equation). Then the kernel of the operator $\mathcal{B}_{{\rm Dir},D}$ can be characterized as follows.

\begin{theorem}\label{thm:kernel-B}
  The kernel   ${\rm Ker}(\mathcal{B}_{{\rm Dir},D})$  of the operator $\mathcal{B}_{{\rm Dir},D}$ is given by
  \begin{equation*}
    {\rm Ker}(\mathcal{B}_{{\rm Dir},D}) = \left\{
      \begin{pmatrix}
        f\\ \Lambda_ef
      \end{pmatrix}:
      f \in  H^{3/2}(\partial D)
    \right\} .
  \end{equation*}
\end{theorem}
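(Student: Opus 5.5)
We prove the two inclusions separately. The key structural fact is the splitting of a radiating biharmonic solution into a Helmholtz part and a modified Helmholtz part. If $\Delta^2 v-\kappa^4 v=0$ in $D^c$ and $v$ is radiating, set
\[
v_{\rm pr}:=-\tfrac{1}{2\kappa^2}(\Delta v-\kappa^2 v), \qquad v_{\rm ev}:=\tfrac{1}{2\kappa^2}(\Delta v+\kappa^2 v).
\]
Then $v=v_{\rm pr}+v_{\rm ev}$, with $\Delta v_{\rm pr}+\kappa^2 v_{\rm pr}=0$, $\Delta v_{\rm ev}-\kappa^2 v_{\rm ev}=0$, and $v_{\rm pr}$ radiating. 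Moreover the far-field pattern of $v$ coincides with that of $v_{\rm pr}$, since $v_{\rm ev}$ decays exponentially. This is exactly the decomposition already used in the proof of Theorem \ref{thm:far-field}(b), where we invoked Rellich's lemma \cite[Lemma 4.1]{dong_uniqueness_2024}.

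\emph{Inclusion ``$\subseteq$''.} Let $(f_1,f_2)^T\in{\rm Ker}(\mathcal{B}_{{\rm Dir},D})$, and let $v$ be the radiating solution of \eqref{eq:gbvp-d-1}--\eqref{eq:gbvp-d-3}. Then $v^\infty=0$, so Rellich's lemma gives $v_{\rm pr}=0$ in $D^c$. Hence $v=v_{\rm ev}$ satisfies $\Delta v-\kappa^2 v=0$ in $D^c$ and decays exponentially at infinity. Since $v\in H^2_{\rm loc}(D^c)$, with exponential decay $v\in H^2(D^c)$, it is the unique solution of \eqref{eq:yukawa}--\eqref{eq:yukawa-b} with $f=f_1$. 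Here uniqueness for the Yukawa exterior problem follows from coercivity of $\int|\nabla w|^2+\kappa^2|w|^2$. By definition of $\Lambda_e$ we get $f_2=\partial_{\bm n}v|_{\partial D}=\Lambda_e f_1$.

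\emph{Inclusion ``$\supseteq$''.} Given $f\in H^{3/2}(\partial D)$, let $v\in H^2(D^c)$ solve \eqref{eq:yukawa}--\eqref{eq:yukawa-b}. Then $\Delta^2 v-\kappa^4 v=(\Delta+\kappa^2)(\Delta-\kappa^2)v=0$ in $D^c$. By the standard decay of modified Helmholtz solutions (for instance via the representation with $\Phi_{{\rm i}\kappa}$, which decays like $e^{-\kappa|\bm x|}$), $v$ and $\partial_r v$ decay exponentially, so the Sommerfeld condition \eqref{eq:gbvp-d-3} holds trivially. Its boundary data are $(f,\Lambda_e f)$. By uniqueness of the exterior Dirichlet biharmonic problem (Theorem 2.1 in \cite{bourgeois_well_2020}, valid for all $\kappa>0$), $v$ is the solution defining $\mathcal{B}_{{\rm Dir},D}(f,\Lambda_e f)^T$. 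Its far-field pattern vanishes by exponential decay, either via the expansion \eqref{eq:asy} or via Lemma \ref{lem:rep} after moving the boundary to a large circle. Hence $(f,\Lambda_e f)^T$ lies in the kernel.

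\emph{Main obstacle.} The delicate point is the regularity and decay bookkeeping in the first inclusion. We must make sure that the Rellich-type lemma applies to $v_{\rm pr}$, which is in $H^1_{\rm loc}$ and actually smooth away from $\partial D$. We must also check that $v$, now a modified Helmholtz solution in $H^2_{\rm loc}(D^c)$, has enough global integrability to be identified with the $H^2(D^c)$ solution defining $\Lambda_e$. We handle the latter by writing $v$ outside a large ball via its Green representation with $\Phi_{{\rm i}\kappa}$, which yields exponential decay of $v$ and its derivatives, and then applying uniqueness in $H^1(D^c)$.
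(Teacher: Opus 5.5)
Your proposal is correct and follows essentially the same route as the paper. For one inclusion, Rellich's lemma forces $v_{\rm pr}=0$, so $v$ solves the modified Helmholtz equation and $f_2=\Lambda_e f_1$. For the other, the exponentially decaying Yukawa solution is a radiating biharmonic solution with data $(f,\Lambda_e f)$ and vanishing far-field pattern. You are slightly more careful than the paper in explicitly invoking uniqueness of the exterior Yukawa and biharmonic Dirichlet problems to identify these solutions, which the paper leaves implicit.
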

\begin{proof}
  Firstly, we shall show ${\rm Ker}(\mathcal{B}_{{\rm Dir},D})\subset  \left\{
     (f, \Lambda_ef)^T: f \in  H^{3/2}(\partial D) \right\} $.
   Given any $(f_1,f_2)^T \in {\rm Ker}(\mathcal{B}_{{\rm Dir},D})$, $v^{\infty}=\mathcal{B}_{{\rm Dir},D}(f_1,f_2)^T =0$ if we assume that $v$ is the radiation solution of the biharmonic wave equation in $D^c$ with $(f_1,f_2)^T$ as the boundary data. Then the Rellich's lemma  \cite[Lemma 4.1]{dong_uniqueness_2024} implies $v_{\rm pr}=0$, where $v_{\rm pr}= -1/(2 \kappa^2)(\Delta v - \kappa^2 v)$. From this we know that $\Delta v -\kappa^2 v =0$ holds in $D^c$, which combined with $(f_1,f_2)^T=(v|_{\partial D},\partial _{\bm{ n}}v|_{\partial D})^T$ yields $f_2=\Lambda_e f_1$.

   Next we should show $ \left\{
     (f, \Lambda_ef)^T: f \in  H^{3/2}(\partial D) \right\}\subset  {\rm Ker}(\mathcal{B}_{{\rm Dir},D})$. Let $f \in  H^{3/2}(\partial D)$ and there exists a function $v \in H^2(D^c)$ satisfying \eqref{eq:yukawa}-\eqref{eq:yukawa-b}  such that $\Lambda_e f=\partial _{\bm{ n }}v |_{\partial D}$. Besides, we notice that the function $v$ decays exponentially as $| \bm{ x } | \to \infty$. Hence, the function $v$ is the solution of the following exterior boundary value problem:
   \begin{align*}
    \Delta^2 v -\kappa^4 v &=  0 \quad \text{ in } ~ D^c, \\
    v &=  f   \quad \mathrm{ on } ~ \partial D,  \\
     \partial _{\bm{ n }} v&=   \Lambda_e f \quad \text{ on } ~ \partial D,\\
        \lim\limits_{r=| \bm{ x } |\to\infty}\sqrt{r}\left(\frac{\partial v}{\partial r}
        -\text{i}\kappa v \right)&=0.
   \end{align*}
   However, in view of the exponential decay of the function $v$, the far-field pattern $v^{\infty}$ of $v$ is identically zero, i.e., $\mathcal{B}_{{\rm Dir},D}(f,\Lambda_ef)^T=0$.

   Therefore, we conclude that the proof is complete.
 \end{proof}
 
In contrast to the cases of inverse acoustic and elastic scattering problems involving an impenetrable obstacle, the data-to-pattern operator $\mathcal{B}_{{\rm Dir},D}$ fails to be injective, which is due to the fact that the Rellich's lemma  \cite[Lemma 4.1]{dong_uniqueness_2024} in the case of biharmonic wave can only guarantee that the propagation part of the scattered field vanishes when the far-field pattern is identically zero.

We introduce the operator $\tilde{\mathcal{S}}_{{\rm Dir},D} :H^{-3/2}(\partial D)\times H^{-1/2}(\partial D)\to H^{3/2}(\partial D)\times H^{1/2}(\partial D)$ by
\begin{equation*}
  \displaystyle
 \tilde{\mathcal{S}}_{{\rm Dir},D}\begin{pmatrix} \tau \\ \sigma \end{pmatrix}
  := \begin{pmatrix}    \displaystyle
 \int_{ \partial D  } \left( \tilde{G}(\bm{ x },\bm{ y }) \tau(\bm{ y }) + \dfrac{\partial \tilde{G}(\bm{ x },\bm{ y })}{\partial \bm{ n }(\bm{ y})} \sigma(\bm{ y })\right) \,\,{\rm d}  s(\bm{ y }) \\[5mm]
    \displaystyle
    \dfrac{\partial }{\partial \bm{ n }(\bm{ x })}
     \int_{ \partial D  } \left( \tilde{G}(\bm{ x },\bm{ y }) \tau(\bm{ y }) + \dfrac{\partial \tilde{G}(\bm{ x },\bm{ y })}{\partial \bm{ n }(\bm{ y})} \sigma(\bm{ y })\right) \,\,{\rm d} s(\bm{ y })
  \end{pmatrix}
  \quad \bm{ x }\in  \partial D,
\end{equation*}
    with $\tilde{G}(\bm{ x },\bm{ y })=1/(3 \kappa^2)[\Phi_{2{\rm i} \kappa}(\bm{ x },\bm{ y }) - \Phi_{{\rm i}\kappa}(\bm{ x },\bm{ y })]$.
The following theorem describes some properties of the operator $\mathcal{S}_{{\rm Dir},D}$ and $\tilde{\mathcal{S}}_D$ .
\begin{theorem}\label{thm:middle}
  \begin{enumerate}
  \item[(a)] The operator $\mathcal{S}_{{\rm Dir},D}$ is an isomorphism if $\kappa$ is not a fourth root of the Dirichlet eigenvalues of bi-Laplacian operator $\Delta^2$ in $D$.
  \item[(b)]  $\mathrm{Im}\, \left\langle
      \begin{pmatrix} \tau\\ \sigma\end{pmatrix},
      \mathcal{S}_{{\rm Dir},D} \begin{pmatrix} \tau\\ \sigma \end{pmatrix}
    \right\rangle >0$  for all   $\begin{pmatrix} \tau\\ \sigma\end{pmatrix} \in \overline{{\rm Ran}(\mathcal{B}_{{\rm Dir},D}^{*})}$ with $\begin{pmatrix} \tau\\ \sigma \end{pmatrix}\not= \bm{ 0 }$ if $\kappa$ is not a clamped transmission eigenvalue.
    
  \item[(c)] $- \tilde{\mathcal{S}}_{{\rm Dir},D}$ is self-adjoint and  positive coercive.

    \item[(d)]  $\mathcal{S}_{{\rm Dir},D} - \tilde{\mathcal{S}}_{{\rm Dir},D}$ is compact.
    \end{enumerate}
\end{theorem}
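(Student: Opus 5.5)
The four parts rest on one observation. For densities $(\tau,\sigma)^T$ on $\partial D$ I will work with the potential
$$w(\bm x)=\int_{\partial D}\Big(G(\bm x,\bm y)\tau(\bm y)+\partial_{\bm n(\bm y)}G(\bm x,\bm y)\sigma(\bm y)\Big)\,{\rm d}s(\bm y),\qquad \bm x\in\mathbb R^2\setminus\partial D,$$
so that $\mathcal S_{{\rm Dir},D}(\tau,\sigma)^T=(w,\partial_{\bm n}w)|_{\partial D}$. The first step is the jump relations. Since $G$ is a combination of two Hankel kernels whose $H^2$-traces are continuous, $w$, $\partial_{\bm n}w$ and $\Delta w$ are continuous across $\partial D$, while $[Mw]$ and $[Nw]$ reproduce $\sigma$ and $\tau$ up to sign. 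Concretely I will use $[\partial_{\bm n}\Delta w]=-\tau$ and $[\Delta w]$-type jumps coming from the double layer, after checking them against Lemma \ref{lem:rep}.

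\emph{Parts (c) and (d).} I start with (d), which is a symbol comparison. Expanding $H_0^{(1)}$ near $0$, the singular parts of $G=\frac{1}{2\kappa^2}(\Phi_{{\rm i}\kappa}-\Phi_\kappa)$ and $\tilde G=\frac{1}{3\kappa^2}(\Phi_{2{\rm i}\kappa}-\Phi_{{\rm i}\kappa})$ both begin with $-\frac{1}{8\pi}|\bm x-\bm y|^2\log|\bm x-\bm y|$, the biharmonic fundamental solution. Their difference is therefore $O(|\bm x-\bm y|^4\log|\bm x-\bm y|)$ plus a smooth function. The corresponding boundary operators gain at least two orders over $\mathcal S_{{\rm Dir},D}$ in each block entry, so they are compact from $H^{-3/2}\times H^{-1/2}$ to $H^{3/2}\times H^{1/2}$.

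For (c), self-adjointness follows from $\tilde G$ being real and symmetric. For coercivity I let $\tilde w$ be the potential built with $\tilde G$. It satisfies $(\Delta-4\kappa^2)(\Delta-\kappa^2)\tilde w=0$ off $\partial D$, i.e.\ $\Delta^2\tilde w-5\kappa^2\Delta\tilde w+4\kappa^4\tilde w=0$, and it decays exponentially. Applying Green's formula (Lemma \ref{lem:green}, with the additional $\Delta$ term) in $D$ and in $D^c$ and using the jumps, I aim to show that
$$-\left\langle(\tau,\sigma)^T,\tilde{\mathcal S}_{{\rm Dir},D}(\tau,\sigma)^T\right\rangle$$
equals an energy of the form $\int_{\mathbb R^2}\big(|\Delta\tilde w|^2+5\kappa^2|\nabla\tilde w|^2+4\kappa^4|\tilde w|^2\big)$, possibly with a positive constant factor. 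This energy is nonnegative and controls $\|\tilde w\|_{H^2}$. Then the trace/jump estimates give $\|(\tau,\sigma)\|_{H^{-3/2}\times H^{-1/2}}\le C\|\tilde w\|_{H^2(\mathbb R^2\setminus\partial D)}$, which yields coercivity. The constant $1/(3\kappa^2)$ is what makes the sign come out right; I will check the sign carefully.

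\emph{Part (a).} By (c) and (d), $\mathcal S_{{\rm Dir},D}$ is Fredholm of index zero, so injectivity suffices. If $\mathcal S_{{\rm Dir},D}(\tau,\sigma)^T=0$, then $w$ solves the exterior Dirichlet problem with zero data and is radiating, so $w=0$ in $D^c$ by the well-posedness in \cite{bourgeois_well_2020}. Inside $D$, $w$ satisfies $\Delta^2w=\kappa^4w$ with $w=\partial_{\bm n}w=0$, so $w=0$ in $D$ because $\kappa^4$ is not a Dirichlet eigenvalue. The jump relations then give $\tau=\sigma=0$.

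\emph{Part (b), the main obstacle.} Green's formula in $D$ and in $B(\bm 0,R)\setminus\overline D$, together with the jumps, should give
$$\left\langle(\tau,\sigma)^T,\mathcal S_{{\rm Dir},D}(\tau,\sigma)^T\right\rangle=\text{real terms}+\text{boundary term at }R.$$
Lemma \ref{lem:sign} then shows the imaginary part equals $c\,\|w^\infty\|_{L^2(\mathbb S)}^2$ with $c>0$, where I must track the sign. This gives $\geq0$, and if it vanishes then $w^\infty=0$. Rellich's lemma then says $w$ solves the modified Helmholtz equation outside $D$, as in the proof of Theorem \ref{thm:kernel-B}.

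The real work is to use $(\tau,\sigma)\in\overline{{\rm Ran}(\mathcal B^*_{{\rm Dir},D})}$. By Theorem \ref{thm:kernel-B} this closure is the annihilator of $\{(f,\Lambda_ef)\}$. Pairing against $(w,\partial_{\bm n}w)|_{\partial D}$, which lies in that kernel, I expect to deduce that the Helmholtz part of $w$ inside $D$, together with the exterior Yukawa part, forms a solution of the clamped transmission problem \eqref{eq:clamped}. Both parts then vanish by assumption, so $w=0$ and the jumps again give $\tau=\sigma=0$. Splitting $w$ in $D$ into $(\Delta+\kappa^2)$ and $(\Delta-\kappa^2)$ components and matching the boundary conditions is the delicate step. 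I would try first to show that the annihilation condition forces the interior Yukawa component to glue smoothly with the exterior decaying field.
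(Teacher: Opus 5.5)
\paragraph{Verdict.} Parts (a), (c), (d) and the first half of (b) are sound. Your (a) uses Fredholm index zero from (c) and (d), then injectivity. Your coercivity bound in (c) controls $(\tau,\sigma)$ by the jumps of $(N\tilde w,M\tilde w)$ via the generalized trace theorem. Both are legitimate alternatives to the paper's citation of \cite{bourgeois_linear_2020} and its appeal to boundedness of $\tilde{\mathcal S}_{{\rm Dir},D}^{-1}$. The genuine gap is in the second half of (b).

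\paragraph{Why your step in (b) fails.} Pairing $(\tau,\sigma)^T$ against the single kernel element $(w,\partial_{\bm n}w)|_{\partial D}$ yields only one scalar identity, essentially $\langle(\tau,\sigma)^T,\mathcal S_{{\rm Dir},D}(\tau,\sigma)^T\rangle=0$. Write $w_{\rm pr}=-\frac{1}{2\kappa^2}(\Delta-\kappa^2)w$ and $w_{\rm ev}=\frac{1}{2\kappa^2}(\Delta+\kappa^2)w$. The imaginary part of this identity you already know. Its real part, after $R\to\infty$ and using $\Delta w=\kappa^2 w$ in $D^c$, reduces to $\mathrm{Re}\int_D w_{\rm ev}\overline{w_{\rm pr}}\,{\rm d}\bm x=0$. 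One number cannot force a transmission problem.

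The intermediate goal you name is also the wrong one. Off the diagonal $(\Delta+\kappa^2)G(\cdot,\bm y)=\Phi_{{\rm i}\kappa}(\cdot,\bm y)$. Hence $w_{\rm ev}=\frac{1}{2\kappa^2}\int_{\partial D}[\Phi_{{\rm i}\kappa}(\cdot,\bm y)\tau(\bm y)+\partial_{\bm n(\bm y)}\Phi_{{\rm i}\kappa}(\cdot,\bm y)\sigma(\bm y)]\,{\rm d}s(\bm y)$ is a modified-Helmholtz single-plus-double layer. Its jumps across $\partial D$ are, up to sign, $\sigma/(2\kappa^2)$ and $\tau/(2\kappa^2)$. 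So ``the interior Yukawa component glues smoothly with the exterior decaying field'' is equivalent to $\tau=\sigma=0$, i.e.\ to the conclusion itself.

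That cannot follow from the annihilator condition alone. Take an eigenpair $(w_1,w_2)$ of \eqref{eq:clamped} and set $w:=w_1$ in $D^c$, $w:=-w_2$ in $D$. This gives nonzero densities $([Nw],[Mw])$ in $\overline{{\rm Ran}(\mathcal{B}_{{\rm Dir},D}^{*})}$ with $w^\infty=0$, hence vanishing imaginary part.

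\paragraph{The missing ingredient.} What you need is the opposite statement: $w_{\rm ev}=0$ in $D$. Then $w|_D=w_{\rm pr}$ solves the Helmholtz equation, and $w|_{D^c}=w_{\rm ev}$ solves the modified Helmholtz equation and decays. Continuity of $w$ and $\partial_{\bm n}w$ across $\partial D$ then gives \eqref{eq:clamped} for $(w|_{D^c},-w|_D)$.

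The paper obtains $w_{\rm ev}|_D=0$ by testing the annihilator condition against the whole family $(\Phi_{{\rm i}\kappa}(\cdot,\bm z),\partial_{\bm n}\Phi_{{\rm i}\kappa}(\cdot,\bm z))|_{\partial D}$, $\bm z\in D$. Each member lies in ${\rm Ker}(\mathcal{B}_{{\rm Dir},D})$ by Theorem \ref{thm:kernel-B}, because $\Phi_{{\rm i}\kappa}(\cdot,\bm z)$ is real-valued, solves $\Delta u-\kappa^2u=0$ in $D^c$, and decays. By the layer formula above, these identities say precisely $w_{\rm ev}(\bm z)=0$ for every $\bm z\in D$. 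This family of test functions, rather than the Cauchy data of $w$ itself, is what your argument lacks.
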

\begin{proof}
(a) The isomorphism of $\mathcal{S}_{{\rm Dir},D}$ follows directly from Proposition 2 in \cite{bourgeois_linear_2020}.

 (b) Let $(\tau,\sigma)^T \in H^{-3/2}(\partial D)\times H^{-1/2}(\partial D)$ and we prove
  \begin{equation*}
\mathrm{Im}\, \left\langle
\begin{pmatrix}
  \tau\\ \sigma
\end{pmatrix}
, \mathcal{S}_{{\rm Dir},D}\begin{pmatrix}
  \tau\\ \sigma
\end{pmatrix}\right\rangle \geq 0.
  \end{equation*}
  To see this, define the following potential of the biharmonic wave equation
  \begin{equation*}
v(\bm{ x })= \int_{ \partial D  } \left[ G(\bm{ x }, \bm{ y }) \tau(\bm{ y }) +\dfrac{\partial G(\bm{ x },\bm{ y })}{\partial \bm{ n }(\bm{ y })}\sigma(\bm{ y })
\right]  \,\,{\rm d}s(\bm{ y }) \quad \bm{ x }\not \in \partial D.
  \end{equation*}
  Then $v \in H^{2}_{\mathrm{ loc } }(\mathbb{R}^2)$ satisfies the biharmonic wave equation in $\mathbb{R}^2\setminus \partial D$. The jump relation of the single- and double-layer potentials imply
  \begin{align}
    \label{eq:jump-1}
   0 & = v|_{+} - v|_{-}\quad
   0 =\partial_{\bm{ n }}  v|_{+}-  \partial_{\bm{ n }} v|_{-}    \quad \mathrm{ on } ~ \partial D, \\
      \label{eq:jump-2}
    \tau & = N\, v|_{+} - N\, v|_{-}\quad
      \sigma = M\, v|_{+} - M\, v|_{-}  \quad \mathrm{ on } ~ \partial D.
  \end{align}
  Using integration by parts formula in $D$ and $D_R:=\{ \bm{ x } \in D^c: | \bm{ x } | < R\}$, we have
  \begin{align*}
    \left\langle
\begin{pmatrix}
  \tau\\ \sigma
\end{pmatrix}
, \mathcal{S}_{{\rm Dir},D}\begin{pmatrix}
  \tau\\ \sigma
\end{pmatrix}\right\rangle
    & =   \left\langle
       \begin{pmatrix}
          N v|_{+} - N v|_{-}\\
         Mv|_{+} - Mv|_{-} 
      \end{pmatrix},
       \begin{pmatrix}
           v\\ \partial _{\bm{ n }} v
        \end{pmatrix}   \right\rangle\\
    & = - \int_{ D\cup D_R  } \left( a(v,\overline{v}) -\kappa^4 | v |^2\right) \,\,{\rm d} \bm{ x  }
      +  \int_{ | \bm{ x } | = R  } [\overline{v} \,Nv+ \partial _{\bm{ n }}\overline{v}\, Mv   ] \,\,{\rm d} s(\bm{ x }).
  \end{align*}
  Taking the imaginary part we can get
    \begin{align} \label{eq:single-imaginary}
   \mathrm{Im}\, \left\langle
\begin{pmatrix}
  \tau\\ \sigma
\end{pmatrix}
, \mathcal{S}_{{\rm Dir},D}\begin{pmatrix}
  \tau\\ \sigma
\end{pmatrix}\right\rangle
      =  \mathrm{Im}\,  \int_{ | \bm{ x } | = R  }
      [\overline{v}\, Nv+ \partial _{\bm{ n }}\overline{v}\, Mv   ]
      \,\,{\rm d} s(\bm{ x })
     =  \frac{\kappa^{2}}{4\pi}\int_{\mathbb{S}}|v^{\infty}|^2\mathrm{d}s \geq 0,
\end{align}
where we use lemma \ref{lem:sign}.

We now prove that $$ 
   \mathrm{Im}\, \left\langle
\begin{pmatrix}
  \tau\\ \sigma
\end{pmatrix}
, \mathcal{S}_{{\rm Dir},D}\begin{pmatrix}
  \tau\\ \sigma
\end{pmatrix}\right\rangle = 0 \quad \text{for some} \quad \begin{pmatrix}
  \tau\\ \sigma
\end{pmatrix} \in   \overline{{\rm Ran}(\mathcal{B}_{{\rm Dir},D}^{*})}$$
implies  that $( \tau, \sigma)^T= \bm{ 0 }$. To prove this, we introduce the annihilator set
\begin{align*}
  \prescript{a}{}{[{\rm Ker}(\mathcal{B}_{{\rm Dir},D})]}
  :=&\Bigg\{ \begin{pmatrix}
  \tau\\ \sigma
  \end{pmatrix} \in  H^{-3/2}(\partial D)\times H^{-1/2}(\partial D) :  \\
     &\quad  \left \langle  \begin{pmatrix}
  \tau\\ \sigma
  \end{pmatrix},  \begin{pmatrix}
  h\\ t
  \end{pmatrix}\right\rangle=0  \,\, \text{for all} \,\,
   \begin{pmatrix}
  h\\ t
   \end{pmatrix}\in {\rm Ker}(\mathcal{B}_{{\rm Dir},D})\Bigg\}
  \end{align*}
   of  ${\rm Ker}(\mathcal{B}_{{\rm Dir},D})$ and  employ the space relation
$ \overline{{\rm Ran}(\mathcal{B}_{{\rm Dir},D}^{*})} = \prescript{a}{}{[{\rm Ker}(\mathcal{B}_{{\rm Dir},D})]}$.
From \eqref{eq:single-imaginary} and Rellich's lemma  \cite[Lemma 4.1]{dong_uniqueness_2024}, we have that
$v_{\rm pr}=0$ and $v=v_{\rm ev}$ in $D^c$, where $v_{\rm pr}=-1/(2 \kappa^2)(\Delta v -\kappa^2 v)$ and $v_{\rm ev}=1/(2 \kappa^2)(\Delta v  + \kappa^2 v)$.  If $\bm{ z }\in D$, it is clear that $ \overline{\Phi}_{{\rm i}\kappa}(\cdot ,\bm{ z })|_{\partial D} \in H^{3/2}(\partial D)$
and $\overline{\Phi}_{{\rm i}\kappa}(\cdot ,\bm{ z })$ satisfies the modified Helmholtz equation \eqref{eq:yukawa} in $D^c$. By
$(\overline{\Phi}_{{\rm i}\kappa}(\cdot ,\bm{ z }),  \partial _{\bm{ n }(\cdot )} \overline{\Phi}_{{\rm i}\kappa}(\cdot ,\bm{ z }))^T \in {\rm Ker}(\mathcal{B}_{{\rm Dir},D})$. Owing to $  (\tau, \sigma)^T \in  \overline{{\rm Ran}(\mathcal{B}_{{\rm Dir},D}^{*})}=\prescript{a}{}{[{\rm Ker}(\mathcal{B}_{{\rm Dir},D})]} $, we can obtain that
\begin{equation*}
       \left \langle  \begin{pmatrix}
  \tau\\ \sigma
  \end{pmatrix},  \begin{pmatrix}
\overline{\Phi}_{{\rm i}\kappa}(\cdot ,\bm{ z })\\  \partial _{\bm{ n }(\cdot )} \overline{\Phi}_{{\rm i}\kappa}(\cdot ,\bm{ z })
  \end{pmatrix}\right\rangle=0,
\end{equation*}
i.e.,
\begin{equation*}
  \int_{ \partial D  }\big[ \tau(\bm{ y }) \Phi_{{\rm i}\kappa}(\bm{ y } ,\bm{ z })
      +  \sigma(\bm{ y })  \partial _{\bm{ n }(\bm{ y } )}\Phi_{{\rm i}\kappa}(\bm{ y } ,\bm{ z }) \big]
  \,\,{\rm d}s(\bm{ y }) =0 \quad \text{for all} \quad \bm{ z } \in  D.
\end{equation*}
Then $v_{\rm ev}=0$  and $v=v_{\rm pr}$ in $D$. Choosing $w_1=v|_{D^c}=v_{\rm ev}|_{D^c}$  and  $w_2=-v|_D=-v_{\rm pr}|_D$, we deduce from the jump relation \eqref{eq:jump-1} that $(w_1,w_2)$ satisfies the clamped transmission eigenvalue problem \eqref{eq:clamped}:
\begin{equation*}
  \begin{aligned}
    \Delta w_1 - \kappa^2 w_1 & = 0   \quad \mathrm{ in } ~ D^c, \\
   \Delta w_2 + \kappa^2 w_2 & = 0   \quad \mathrm{ in } ~ D, \\
  w_1 + w_2 &= 0  \quad \mathrm{ on } ~ \partial D,  \\
    \partial _{\bm{ n }} w_1 + \partial _{\bm{ n }} w_2  & = 0 \quad \mathrm{ on } ~ \partial D.
  \end{aligned}
\end{equation*}
Since $\kappa$ is not a clamped transmission eigenvalue,  we conclude that $(w_1,w_2)=\bm{ 0 }$ and thus $v\equiv 0$ in $\mathbb{R}^2$. Consequently, the jump relation \eqref{eq:jump-2} yields $(\tau,\sigma)^T=\bm{ 0 }$.

(c) Let
$$\mathbb{G}_{\rm Dir}(\bm{ x },\bm{ y }):= 
\begin{pmatrix}
  \tilde{G}(\bm{ x },\bm{ y })
  &   \displaystyle  \frac{\partial \tilde{G}(\bm{ x },\bm{ y })}{\partial \bm{ n }(\bm{ y })}\\[3mm]
  \displaystyle
  \frac{\partial \tilde{G}(\bm{ x },\bm{ y })}{\partial \bm{ n }(\bm{ x })}
  &   \displaystyle \frac{\partial^2 \tilde{G}(\bm{ x },\bm{ y })}{\partial \bm{ n }(\bm{ x }) \partial \bm{ n }(\bm{ y })}
\end{pmatrix}.
$$
Then the self-adjointness of $\tilde{\mathcal{S}}_{{\rm Dir},D}$ follows from the facts that  the function $\tilde{G}(\bm{ x },\bm{ y })$ is real-valued and the relation $\mathbb{G}_{\rm Dir}(\bm{ x },\bm{ y })=  \mathbb{G}_{\rm Dir}(\bm{ y },\bm{ x })^T$ holds.

Now we turn to proving the coercivity of  $-\tilde{\mathcal{S}}_{{\rm Dir},D}$.
The operator $\tilde{\mathcal{S}}_{{\rm Dir},D}$ is an isomorphism from $H^{-3/2}(\partial D)\times H^{-1/2}(\partial D)$ to $H^{3/2}(\partial D)\times H^{1/2}(\partial D)$ by (a).
Consider the function 
  \begin{equation*}
v(\bm{ x })= \int_{ \partial D  } \left[ \tilde{G}(\bm{ x }, \bm{ y }) \tau(\bm{ y }) +\dfrac{\partial \tilde{G}(\bm{ x },\bm{ y })}{\partial \bm{ n }(\bm{ y })}\sigma(\bm{ y })
\right]  \,\,{\rm d}s(\bm{ y }) \quad \bm{ x }\not \in \partial D.
\end{equation*}
We observe that the function $v$ decays exponentially as $| \bm{ x } | \to \infty$. With this,
 it follows from the Green's formulas in  $D$ and $D_R$ that
\begin{align*}
   - \left\langle
\begin{pmatrix}
  \tau\\ \sigma
\end{pmatrix}
, \tilde{\mathcal{S}}_{{\rm Dir},D}\begin{pmatrix}
  \tau\\ \sigma
\end{pmatrix}\right\rangle
    & =  \int_{ D\cup D_R  } \left( a(v,\overline{v}) -5 \kappa^2 \Delta v \overline{v}+ 4\kappa^4 | v |^2\right) \,\,{\rm d} \bm{ x  }
      -  \int_{ | \bm{ x } | = R  } [\overline{v} \,Nv+ \partial _{\bm{ n }}\overline{v}\, Mv   ] \,\,{\rm d} s(\bm{ x })\\
    &=  \int_{ D\cup D^c  } \left( a(v,\overline{v}) +5 \kappa^2  | \nabla v |^2  + 4\kappa^4 | v |^2\right) \,\,{\rm d} \bm{ x  }\\
  & \geq c \| v \|_{H^2(\mathbb{R}^2)}^2,
\end{align*}
where we let $R$ tend to infinity and $c>0$ is a constant. By trace theorem (see Lemma 5.7.1 in \cite{Hsiao_boundary_2021}) and the boundedness of $\tilde{\mathcal{S}}_{{\rm Dir},D}^{-1}$, we deduce that
\begin{align*}
   - \left\langle
\begin{pmatrix}
  \tau\\ \sigma
\end{pmatrix}
, \tilde{\mathcal{S}}_{{\rm Dir},D}\begin{pmatrix}
  \tau\\ \sigma
\end{pmatrix}\right\rangle
  & \geq c \| v \|_{H^2(\mathbb{R}^2)}^2\\
  &  \geq c \Big\|\begin{pmatrix}
      v\\ \partial _{\bm{ n }} v
    \end{pmatrix} \Big\|_{H^{3/2}(\partial D)\times H^{1/2}(\partial D)}^2
    = c \Big\|  \tilde{\mathcal{S}}_{{\rm Dir},D}\begin{pmatrix}
        \tau\\ \sigma
          \end{pmatrix}\Big\|_{H^{3/2}(\partial D)\times H^{1/2}(\partial D)}^2 \\
  &  \geq c \Big\|\begin{pmatrix}
        \tau\\ \sigma
          \end{pmatrix}\Big\|_{H^{-3/2}(\partial D)\times H^{-1/2}(\partial D)}^2,
\end{align*}
where $c>0$ is a constant.

 (d) The compactness of $\mathcal{S}_{{\rm Dir},D}-\tilde{\mathcal{S}}_{{\rm Dir},D}$ follows from the fact the kernel $G-\tilde{G}$ is of the form 
  \begin{align*}
     & H^{(1)}_0({\rm i} \kappa | \bm{ x }-\bm{ y } | )
      - H^{(1)}_0( \kappa | \bm{ x }-\bm{ y } | ) 
      -\frac{2}{3} [   H^{(1)}_0(2{\rm i} \kappa | \bm{ x }-\bm{ y } | )
      - H^{(1)}_0( {\rm i}\kappa | \bm{ x }-\bm{ y } | )   ]    \\
  =  & A_1(| \bm{ x }-\bm{ y } | ) | \bm{ x }-\bm{ y } |^4 \ln (| \bm{ x }-\bm{ y } |) + A_2(| \bm{ x }-\bm{ y } |),
  \end{align*}
where $A_1$ and $A_2$ are analytic functions.
\end{proof}

Theorem \ref{thm:range-1} could be applied to the factorization \eqref{eq:far-field-factorization} with $H=L^2(\mathbb{S})$, $X=H^{-3/2}(\partial D)\times H^{-1/2}(\partial D)$, $F=F_D$, $B=\mathcal{B}_{{\rm Dir},D}$ and $A=2 \kappa^2\mathcal{S}_{{\rm Dir},D}^{*}$. The assumptions of $F=F_D$ and $A=\mathcal{S}_{{\rm Dir},D}^{*}$  are verified  in Theorems \ref{thm:far-field} and \ref{thm:middle} respectively. In addition,  Theorem \ref{thm:range-2} can also be applied to the factorization \eqref{eq:far-field-factorization} with $Y=L^2(\mathbb{S})$, $X=H^{-3/2}(\partial D)\times H^{-1/2}(\partial D)$, $F=F_D$, $G=\mathcal{B}_{{\rm Dir},D}$, $T=2 \kappa^2\mathcal{S}_{{\rm Dir},D}^{*}$ and $t=\pi$. The  assumptions of  $G=\mathcal{B}_{{\rm Dir},D}$  and $T=2 \kappa^2\mathcal{S}_{{\rm Dir},D}^{*}$ are satisfied in Theorems \ref{thm:solution-operator} and \ref{thm:middle} respectively. Thus we have the following range identities.
\begin{theorem}\label{thm:range-identities}
  Assume $\kappa$ is not a clamped transmission eigenvalue. Then
  \begin{equation*}
{\rm Ran}((F_D^{*}F_D)^{1/4} )={\rm Ran}(F_{D,\#}^{1/2})={\rm Ran}(\mathcal{B}_{{\rm Dir},D}).
  \end{equation*}

\end{theorem}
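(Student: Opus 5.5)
The plan is to obtain both identities from the abstract range theorems, Theorem~\ref{thm:range-1} and Theorem~\ref{thm:range-2}, applied to the symmetric factorization \eqref{eq:far-field-factorization}. We take $H=Y=L^2(\mathbb{S})$, $X=H^{-3/2}(\partial D)\times H^{-1/2}(\partial D)$ (reflexive, with $X^*=H^{3/2}(\partial D)\times H^{1/2}(\partial D)$), $B=G=\mathcal{B}_{{\rm Dir},D}$ and $A=T=2\kappa^2\mathcal{S}_{{\rm Dir},D}^{*}$. With these choices $F_D=BAB^*$. The work then consists of checking the hypotheses. Throughout, adjoints of the boundary operators are taken with respect to the real bilinear pairing $\langle\cdot,\cdot\rangle$ used in Theorem~\ref{thm:middle}, so that ${\rm Im}\langle\varphi,\mathcal{S}^*\varphi\rangle=-{\rm Im}\langle\varphi,\mathcal{S}\varphi\rangle$ after conjugation.

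\textbf{$(F^*F)^{1/4}$ identity.} We verify the three hypotheses of Theorem~\ref{thm:range-1}.

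Hypothesis (3): Theorem~\ref{thm:far-field}(b),(c) gives that $F_D$ is injective, since $\kappa$ is not a clamped transmission eigenvalue, and that $I+\frac{\rm i}{4\pi}F_D$ is unitary. So $r=1/(4\pi)$ works.

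Hypothesis (1): split $A=A_0+C$ with $A_0=2\kappa^2\tilde{\mathcal{S}}_{{\rm Dir},D}^{*}=2\kappa^2\tilde{\mathcal{S}}_{{\rm Dir},D}$ and $C=2\kappa^2(\mathcal{S}_{{\rm Dir},D}-\tilde{\mathcal{S}}_{{\rm Dir},D})^{*}$.
\begin{itemize}
\item $A_0$ is self-adjoint by Theorem~\ref{thm:middle}(c).
\item $A_0$ satisfies $|\langle\varphi,A_0\varphi\rangle|\ge c\|\varphi\|^2$ on all of $X$, hence on ${\rm Ran}(B^*)$, because $-\tilde{\mathcal{S}}_{{\rm Dir},D}$ is coercive by Theorem~\ref{thm:middle}(c).
\item $C$ is compact by Theorem~\ref{thm:middle}(d), since adjoints of compact operators are compact.
\end{itemize}

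Hypothesis (2): Theorem~\ref{thm:middle}(b) shows that ${\rm Im}\langle\varphi,\mathcal{S}_{{\rm Dir},D}\varphi\rangle>0$ for $0\neq\varphi\in\overline{{\rm Ran}(\mathcal{B}^*_{{\rm Dir},D})}$. The sign flips under the adjoint but stays nonzero, which is all Theorem~\ref{thm:range-1} requires. This yields ${\rm Ran}((F_D^*F_D)^{1/4})={\rm Ran}(\mathcal{B}_{{\rm Dir},D})$.

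\textbf{$F_\#^{1/2}$ identity.} We verify the hypotheses of Theorem~\ref{thm:range-2}.

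Hypothesis (1): Theorem~\ref{thm:solution-operator} gives that $G=\mathcal{B}_{{\rm Dir},D}$ is compact with dense range. For the Gelfand triple we use $H^{-3/2}\times H^{-1/2}\subset L^2\times L^2\subset H^{3/2}\times H^{1/2}$, which are dense embeddings.

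Hypothesis (2): take $t=\pi$. Then ${\rm Re}(e^{{\rm i}\pi}T)=-2\kappa^2{\rm Re}\,\mathcal{S}^*_{{\rm Dir},D}=T_0+K$ with $T_0=-2\kappa^2\tilde{\mathcal{S}}_{{\rm Dir},D}$, which is positive coercive by Theorem~\ref{thm:middle}(c). The remainder is $K=-2\kappa^2{\rm Re}(\mathcal{S}_{{\rm Dir},D}-\tilde{\mathcal{S}}_{{\rm Dir},D})^*$, which is self-adjoint and compact by Theorem~\ref{thm:middle}(d).

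Hypothesis (3): this needs a definite sign, ${\rm Im}\langle\varphi,T\varphi\rangle>0$, on $\overline{{\rm Ran}(G^*)}\setminus\{0\}$. Theorem~\ref{thm:middle}(b) gives strict definiteness of ${\rm Im}\,\mathcal{S}$ on that set, with a sign that is either preserved or reversed by the adjoint. If it is reversed, we replace $F_D$ by $\overline{F_D}$ or $F_D^{*}$, equivalently use the conjugate factorization $F_D^*=2\kappa^2\mathcal{B}\mathcal{S}\mathcal{B}^*$. Since ${\rm Re}$ and ${\rm Im}$ parts only change sign, $F_{D,\#}$ is unchanged, so either sign convention gives the same conclusion ${\rm Ran}(F_{D,\#}^{1/2})={\rm Ran}(\mathcal{B}_{{\rm Dir},D})$.

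\textbf{Main obstacle.} Everything substantive is already contained in Theorem~\ref{thm:middle}. The one delicate point is hypothesis (2) of Theorem~\ref{thm:range-1} and hypothesis (3) of Theorem~\ref{thm:range-2}, which must hold on the closure of ${\rm Ran}(\mathcal{B}^*_{{\rm Dir},D})$ rather than on all of $X$. This is exactly where the non-injectivity of $\mathcal{B}_{{\rm Dir},D}$ (Theorem~\ref{thm:kernel-B}) and the clamped transmission eigenvalue assumption enter, and both are handled by Theorem~\ref{thm:middle}(b) via the annihilator identity $\overline{{\rm Ran}(\mathcal{B}^*)}={}^{a}[{\rm Ker}\,\mathcal{B}]$. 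I would double-check that this identity holds in the reflexive setting, which it does by the Hahn--Banach theorem.
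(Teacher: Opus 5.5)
Your proposal is correct and follows the paper's argument: insert the factorization \eqref{eq:far-field-factorization} into Theorems \ref{thm:range-1} and \ref{thm:range-2} and read off the hypotheses from Theorems \ref{thm:far-field}, \ref{thm:solution-operator} and \ref{thm:middle}. Your passage to $F_D^{*}=2\kappa^2\mathcal{B}_{{\rm Dir},D}\mathcal{S}_{{\rm Dir},D}\mathcal{B}_{{\rm Dir},D}^{*}$, using $(F_D^{*})_{\#}=F_{D,\#}$, secures the sign of the imaginary part independently of the pairing convention, which the paper does not address. There is one slip, inherited from the paper's wording: $\mathcal{B}_{{\rm Dir},D}$ acts on $H^{3/2}(\partial D)\times H^{1/2}(\partial D)$, and Theorem \ref{thm:middle}(c) gives coercivity in the $H^{-3/2}\times H^{-1/2}$ norm. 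So you need $X=H^{3/2}(\partial D)\times H^{1/2}(\partial D)$, $X^{*}=H^{-3/2}(\partial D)\times H^{-1/2}(\partial D)$, and the Gelfand triple $H^{3/2}\times H^{1/2}\subset L^2\times L^2\subset H^{-3/2}\times H^{-1/2}$, not the reversed chain you wrote.
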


Now we are ready to give another characterization the range $ {\rm Ran}(\mathcal{B}_{{\rm Dir},D})$ of the operator $\mathcal{B}_{{\rm Dir},D}$.
\begin{theorem}\label{thm:solution-range}
  For $\bm{ z }\in \mathbb{R}^2$, define $\phi_{\bm{ z }}(\bm{ \hat{x}  }):=e^{-{\rm i}\kappa \hat{\bm{ x }} \cdot \bm{ z }}$ $\hat{\bm{ x }} \in \mathbb{S} $. Then
$$
    \phi_{\bm{ z }} \in {\rm Ran}(\mathcal{B}_{{\rm Dir},D}) \iff \bm{ z }\in \overline{D}.
$$
\end{theorem}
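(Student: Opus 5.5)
For the direction $\bm z\in \overline{D}\Rightarrow\phi_{\bm z}\in{\rm Ran}(\mathcal{B}_{{\rm Dir},D})$, I would realize $\phi_{\bm z}$ as the far-field pattern of a radiating solution of the biharmonic wave equation in $D^c$ whose Dirichlet data lie in $H^{3/2}(\partial D)\times H^{1/2}(\partial D)$. By \eqref{eq:asy-G}, the function $w_{\bm z}:=-2\kappa^2 G(\cdot,\bm z)$ has far-field pattern $\phi_{\bm z}$ and satisfies the biharmonic wave equation away from $\bm z$.

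\begin{itemize}[leftmargin=*]
\item If $\bm z\in D$, then $w_{\bm z}$ is smooth near $\partial D$, so $(w_{\bm z},\partial_{\bm n}w_{\bm z})|_{\partial D}$ is admissible boundary data. Uniqueness of the exterior Dirichlet problem then gives $\mathcal{B}_{{\rm Dir},D}(w_{\bm z},\partial_{\bm n}w_{\bm z})^T=\phi_{\bm z}$.
\item If $\bm z\in\partial D$, the singularity of $G$ is only of type $|\bm x-\bm z|^2\ln|\bm x-\bm z|$, because the logarithmic parts of $\Phi_{{\rm i}\kappa}$ and $\Phi_\kappa$ cancel in \eqref{eq:funda-bi}. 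Hence $G(\cdot,\bm z)\in H^{2}_{\rm loc}(\mathbb{R}^2)$. Indeed $|x|^2\ln|x|$ has second derivatives of type $\ln|x|$, which are in $L^2_{\rm loc}$, and even $H^{3-\epsilon}_{\rm loc}$. Its traces therefore lie in $H^{3/2}(\partial D)\times H^{1/2}(\partial D)$, and the same argument as in the case $\bm z\in D$ applies.
\end{itemize}

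For the converse, suppose $\bm z\notin\overline D$ and $\phi_{\bm z}=\mathcal{B}_{{\rm Dir},D}(f_1,f_2)^T=v^\infty$. Then $v$ and $w_{\bm z}$ have the same far-field pattern in $\mathbb{R}^2\setminus(\overline D\cup\{\bm z\})$.

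The difficulty compared with the acoustic case is that Rellich's lemma (\cite[Lemma 4.1]{dong_uniqueness_2024}) only yields equality of the propagating parts: $v_{\rm pr}=(w_{\bm z})_{\rm pr}$ outside $\overline D\cup\{\bm z\}$. Here $(w_{\bm z})_{\rm pr}=-\frac{1}{2\kappa^2}(\Delta-\kappa^2)w_{\bm z}$, and a direct computation gives
\begin{equation*}
(w_{\bm z})_{\rm pr}=-\Phi_\kappa(\cdot,\bm z)\cdot(\text{nonzero constant}).
\end{equation*}
This follows because $(\Delta-\kappa^2)\Phi_{{\rm i}\kappa}=0$ and $(\Delta-\kappa^2)\Phi_\kappa=-2\kappa^2\Phi_\kappa$ away from $\bm z$.

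The key step, and the one I expect to be the main obstacle, is to exploit this. On the one hand, $v_{\rm pr}=-\frac1{2\kappa^2}(\Delta v-\kappa^2 v)$ is defined in all of $D^c$. Since $v\in H^2_{\rm loc}(D^c)$ is smooth in the interior by elliptic regularity, $v_{\rm pr}$ is analytic, and in particular bounded, in a neighborhood of $\bm z$, which lies in $D^c$. On the other hand, $\Phi_\kappa(\cdot,\bm z)$ blows up logarithmically as $\bm x\to\bm z$. To identify the two near $\bm z$, I will invoke unique continuation. Both $v_{\rm pr}$ and the multiple of $\Phi_\kappa(\cdot,\bm z)$ solve the Helmholtz equation in the connected set $D^c\setminus\{\bm z\}$, which is connected because $D^c$ is connected and open. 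They coincide outside a large ball, hence everywhere in $D^c\setminus\{\bm z\}$.

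Letting $\bm x\to\bm z$ gives a contradiction between boundedness and logarithmic blow-up. Therefore $\phi_{\bm z}\notin{\rm Ran}(\mathcal{B}_{{\rm Dir},D})$.

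A point to check carefully is that the propagating part of $v$ is well defined and analytic up to $\bm z$. This holds since no boundary regularity is needed at an interior point.
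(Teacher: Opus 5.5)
Your proposal is correct and follows the paper's route. For $\bm z\in\overline D$ you use the Dirichlet traces of $-2\kappa^2G(\cdot,\bm z)$; you justify $G(\cdot,\bm z)\in H^2_{\rm loc}(\mathbb{R}^2)$ directly from the cancellation of the logarithmic singularities, where the paper simply cites \cite{bourgeois_linear_2020}. For $\bm z\notin\overline D$ you use Rellich's lemma and unique continuation to get $v_{\rm pr}=\Phi_\kappa(\cdot,\bm z)$ in $\mathbb{R}^2\setminus(\overline D\cup\{\bm z\})$, which contradicts the analyticity of $v_{\rm pr}$ at $\bm z$; your ``nonzero constant'' is exactly $-1$, i.e.\ $(w_{\bm z})_{\rm pr}=\Phi_\kappa(\cdot,\bm z)$.
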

\begin{proof}
  Firstly, let $\bm{ z }\in \overline{D}$ and $v(\bm{ x }):=-2 \kappa^2G(\bm{ x },\bm{ z })$. Define
 $f_1= v|_{\partial D}$ and $f_2= \partial _{\bm{ n }}v|_{\partial D}$.
  Then $(f_1,f_2)^T \in H^{3/2}(\partial D)\times H^{1/2}(\partial D)$, which is a direct consequence of $G(\cdot ,\bm{ z })$ belongs to $H^2(D)$ (see \cite[Lemma 2.2]{bourgeois_linear_2020}). Note that the far-field pattern $v^{\infty}$ of $v$ is given by $-2 \kappa^2 G^{\infty}(\cdot,\bm{ z })$, i.e., $\phi_{\bm{ z }}$. So $\mathcal{B}_{{\rm Dir},D}(f_1,f_2)^T=\phi_{\bm{ z }}$.

  Secondly, let $\bm{ z }\not \in \overline{D}$. Assume that $\phi_{\bm{ z }}=\mathcal{B}_{{\rm Dir},D}(f_1,f_2)^T$ with $(f_1,f_2)^T \in  H^{3/2}(\partial D)\times H^{1/2}(\partial D)$. Denote by the solution of \eqref{eq:bvp-1}-\eqref{eq:bvp-4} with Dirichlet data $(f_1,f_2)^T$ and $v^{\infty}$ its far-field pattern. Then $v^{\infty}=\phi_{\bm{ z }}$, which together with Rellich's Lemma  \cite[Lemma 4.1]{dong_uniqueness_2024} and unique continuation principle implies $v_{\rm pr}(\bm{ x })=\Phi_{\kappa}(\bm{ x },\bm{ z })$ holds in $\mathbb{R}^2\setminus(\overline{D}\cup \{\bm{ z }\})$, where $v_{\rm pr}=-1/(2 \kappa^2)(\Delta v - \kappa^2 v)$ is the propagation part of $v$ (see \cite{bourgeois_well_2020}). It is obvious that $v$ is analytic in $D^c$ and so is $v_{\rm pr}$, which is a contradiction because $\Phi_{\kappa}(\bm{ x },\bm{ z })$ is singular at $\bm{ x }=\bm{ z }$. The proof is compete.
\end{proof}

\begin{remark}
  We want to emphasize that the result of the above theorem is slightly different with those of acoustic and elastic scattering problems, which mainly comes from the regularities of the fundamental solution. When $\bm{ z }\in \partial D$,  $G(\cdot ,\bm{ z })$ belongs to $H^2(D)$, whereas  $\Phi_{\kappa}(\cdot ,\bm{ z })$ fails to  belong to $H^1(D)$.
\end{remark}

Theorems \ref{thm:range-identities}, \ref{thm:solution-range} and Picards' theorem (see theorem 4.8 in \cite{colton_inverse_2019}) yield one of the central result of this paper immediately.
\begin{theorem}\label{thm:indicator-d}
  Assume $\kappa$ is not a clamped transmission eigenvalue. 
  \begin{align*}
    \bm{ z } \in \overline{D} \iff & \phi_{\bm{ z }} \in {\rm Ran}((F_D^{*}F_D)^{1/4} ) \iff
         W_1(\bm{ z }):= \left[
                                \sum\limits_j \dfrac{|( \phi_{\bm{ z }} ,\psi_{1,j}) |^2 }{| \lambda_{1,j} | }      \right]^{-1} >0 \\
    \iff & \phi_{\bm{ z }} \in {\rm Ran}(F_{D,\#}^{1/2} ) \iff
        W_2(\bm{ z }):= \left[
              \sum\limits_j \dfrac{|( \phi_{\bm{ z }} ,\psi_{2,j}) |^2 }{| \lambda_{2,j} | }      \right]^{-1} >0.
  \end{align*}
  Here $\phi_{\bm{ z }}$ is given in theorem \ref{thm:solution-range}, $(\lambda_{1,j},\psi_{1,j})$ and $(\lambda_{2,j},\psi_{2,j})$ are the eigensystems of the operator $(F_D^{*}F_D)^{1/4} $ and $F_{D,\#}^{1/2}$ respectively.
\end{theorem}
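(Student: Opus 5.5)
The proof chains three equivalences that are already available: the geometric characterization of ${\rm Ran}(\mathcal{B}_{{\rm Dir},D})$, the range identities, and Picard's criterion for membership in the range of a compact operator.

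\emph{Step 1 (geometry).} By Theorem \ref{thm:solution-range}, $\bm{z}\in\overline{D}$ if and only if $\phi_{\bm z}\in{\rm Ran}(\mathcal{B}_{{\rm Dir},D})$.

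\emph{Step 2 (range identities).} Since $\kappa$ is not a clamped transmission eigenvalue, Theorem \ref{thm:range-identities} gives
\[
{\rm Ran}(\mathcal{B}_{{\rm Dir},D})={\rm Ran}((F_D^*F_D)^{1/4})={\rm Ran}(F_{D,\#}^{1/2}).
\]
Combined with Step 1, this yields the first equivalence on each line of the statement.

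\emph{Step 3 (Picard).} We convert range membership into the sign of $W_1$ and $W_2$. By Theorem \ref{thm:far-field}, $F_D$ is compact, injective (again using the eigenvalue assumption) and $I+\frac{\rm i}{4\pi}F_D$ is unitary. Hence $F_D$ is normal and $(F_D^*F_D)^{1/4}$ is compact, self-adjoint, nonnegative and injective. Its eigensystem $(\lambda_{1,j},\psi_{1,j})$ is therefore a complete orthonormal basis of $L^2(\mathbb{S})$ with $\lambda_{1,j}>0$.

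For $F_{D,\#}^{1/2}=|{\rm Re}\,F_D|+|{\rm Im}\,F_D|$, normality lets us diagonalize $F_D$ in an orthonormal eigenbasis $\psi_{2,j}$ with eigenvalues $\mu_j$. The operator $F_{D,\#}^{1/2}$ then has the same eigenvectors and eigenvalues $\lambda_{2,j}=|{\rm Re}\,\mu_j|+|{\rm Im}\,\mu_j|>0$, using injectivity. Thus $F_{D,\#}^{1/2}$ is also compact, self-adjoint, positive and injective with a complete eigenbasis.

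For a compact self-adjoint injective operator $K$ with eigensystem $(\lambda_j,\psi_j)$, Picard's theorem (Theorem 4.8 in \cite{colton_inverse_2019}) states that $\phi\in{\rm Ran}(K)$ if and only if $\sum_j |(\phi,\psi_j)|^2/|\lambda_j|^2<\infty$. Here the singular system is the eigensystem up to signs.

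\emph{Main subtlety: the exponent.} The indicators $W_1,W_2$ are written with $|\lambda_{i,j}|$ in the first power rather than squared. This is consistent if $\lambda_{1,j}$ and $\lambda_{2,j}$ denote eigenvalues of $F_D$ and $F_{D,\#}$ themselves. In that case the eigenvalues of the square-root operators $(F_D^*F_D)^{1/4}$ and $F_{D,\#}^{1/2}$ are $|\lambda_{i,j}|^{1/2}$, and Picard's series for them is exactly $\sum_j |(\phi_{\bm z},\psi_{i,j})|^2/|\lambda_{i,j}|$.

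I will state this convention explicitly, i.e.\ read the eigensystem as that of $F_D$ (resp.\ $F_{D,\#}$), so that the displayed series is precisely Picard's criterion for the square-root operator. With the convention fixed, the series converges if and only if the reciprocal $W_i(\bm z)$ is positive, and divergence corresponds to $W_i(\bm z)=0$. Chaining Steps 1–3 gives all the stated equivalences.
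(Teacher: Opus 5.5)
Your proposal is correct and follows the paper's own argument, which simply chains Theorem \ref{thm:solution-range}, the range identities of Theorem \ref{thm:range-identities}, and Picard's theorem. Your remark on the exponent corrects the statement as printed: first-power denominators are Picard's criterion only if $\lambda_{1,j}$ and $\lambda_{2,j}$ are eigenvalues of $F_D$ and of $F_{D,\#}:=|{\rm Re}\,F_D|+|{\rm Im}\,F_D|$, with $F_{D,\#}^{1/2}$ taken as the square root of $F_{D,\#}$ rather than $F_{D,\#}$ itself, which is how both the paper's definition and your own Step 3 initially write it.
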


 As explained  in \cite{arens_linear_2009} and \cite[Theorem 5.42]{colton_inverse_2019}, Theorem \ref{thm:indicator-d} can immediately implies a rigorous theoretical foundation for the far-field linear sampling method of the inverse biharmonic scattering problem.

\subsection{The case of Neumann obstacle}
This subsection is concerned with the biharmonic scattering in the Neumann case. In this case, we define the \emph{data-to-pattern operator} $\mathcal{B}_{{\rm Neu},D}$: $H^{-3/2}(\partial D)\times H^{-1/2}(\partial D)\to L^{2}(\mathbb{S} )$ such that for $(f_1,f_2)^T\in H^{-3/2}(\partial D)\times H^{-1/2}(\partial D)$, $\mathcal{B}_{{\rm Neu},D}(f_1,f_2)^T=v^{\infty}$, where $v^{\infty}$ is the far-field of $v$ and $v$ is the solution to the exterior Neumann boundary value problem 
\begin{align}
\label{eq:gbvp-n-1}
\Delta^2 v- \kappa^4v &= 0 \quad \mathrm{ in}\; D^{c},\\
\label{eq:gbvp-n-2}
( Mv, N v)&= (f_2,f_1)  \quad \mathrm{on}\; \partial D,\\
\label{eq:gbvp-n-3}
\lim\limits_{r\to\infty}\sqrt{r}\left(\frac{\partial v}{\partial r}
-\mathrm{i}\kappa v \right)&=0,\quad r=| \bm{ x } |.
\end{align}
By theorem 2.1 in \cite{bourgeois_well_2020}, the operator $\mathcal{B}_{{\rm Neu},D}$ is well-defined  except $\kappa\in \mathcal{K}_0$. Under the assumption $\kappa\not \in \mathcal{K}_0$, we can get the properties of $\mathcal{B}_{{\rm Neu},D}$.
\begin{theorem} \label{thm:solution-operator-n}
	The operator $\mathcal{B}_{{\rm Neu},D}$: $H^{-3/2}(\partial D)\times H^{-1/2}(\partial D)\to L^{2}(\mathbb{S} )$ is compact with dense range.
\end{theorem}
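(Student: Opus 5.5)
The plan mirrors the proof of Theorem \ref{thm:solution-operator}, with the roles of Dirichlet and Neumann traces interchanged.

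\emph{Compactness.} Fix $R>0$ with $\overline{D}\subset B(\bm{0},R)$ and factor $\mathcal{B}_{{\rm Neu},D}=\mathcal{B}_{D,2}\mathcal{B}_{D,1}^{\rm Neu}$. Here $\mathcal{B}_{D,1}^{\rm Neu}$ maps $(f_1,f_2)^T$ to the four traces $(v,\partial_{\bm n}v,Mv,Nv)$ on $\partial B(\bm{0},R)$, where $v$ is the radiating solution of \eqref{eq:gbvp-n-1}--\eqref{eq:gbvp-n-3}. The operator $\mathcal{B}_{D,2}$ is the far-field integral operator from the proof of Theorem \ref{thm:solution-operator}, with the integration taken over $\partial B(\bm{0},R)$; this is legitimate by Lemma \ref{lem:rep}, applied with the exterior of the ball. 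Since $\kappa\notin\mathcal{K}_0$, the well-posedness result (Theorem 2.1 in \cite{bourgeois_well_2020}) gives $v\in H^2_{\rm loc}(D^c)$ depending continuously on the data. Interior elliptic regularity on a neighbourhood of $\partial B(\bm{0},R)$ bounded away from $\partial D$ then shows that $\mathcal{B}_{D,1}^{\rm Neu}$ is compact into $(C(\partial B(\bm{0},R)))^4$. Hence $\mathcal{B}_{{\rm Neu},D}$ is compact.

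\emph{Adjoint.} Denseness of the range is equivalent to injectivity of $\mathcal{B}_{{\rm Neu},D}^{*}:L^2(\mathbb{S})\to H^{3/2}(\partial D)\times H^{1/2}(\partial D)$. For $g\in L^2(\mathbb{S})$ set
\[
U^{\rm in}(\bm{x})=\int_{\mathbb{S}}e^{-{\rm i}\kappa\bm{x}\cdot\bm{d}}\,\overline{g(\bm{d})}\,{\rm d}s(\bm{d}),
\]
and let $U=U^{\rm in}+U^{\rm sc}$, where $U^{\rm sc}$ is the radiating solution with Neumann data $(MU^{\rm sc},NU^{\rm sc})=-(MU^{\rm in},NU^{\rm in})$ on $\partial D$; this exists because $\kappa\notin\mathcal{K}_0$. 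Starting from Lemma \ref{lem:rep}, the pairing $(\mathcal{B}_{{\rm Neu},D}(f_1,f_2)^T,g)_{L^2(\mathbb{S})}$ becomes, after interchanging integrals, the boundary bilinear form
\[
\int_{\partial D}\bigl\{[U^{\rm in}Nv+\partial_{\bm n}U^{\rm in}Mv]-[NU^{\rm in}\,v+MU^{\rm in}\,\partial_{\bm n}v]\bigr\}\,{\rm d}s,
\]
up to the factor $-1/(2\kappa^2)$. Next add the same form with $U^{\rm in}$ replaced by $U^{\rm sc}$. This added form vanishes: both $U^{\rm sc}$ and $v$ are radiating, so the form can be moved to $|\bm{x}|=R$ by Green's formula \eqref{eq:green-2} and then sent to zero as $R\to\infty$, as in the proof of Theorem \ref{thm:solution-operator}. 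Because $MU=NU=0$ on $\partial D$, the terms involving $v$ and $\partial_{\bm n}v$ drop out. Using $Nv=f_1$ and $Mv=f_2$, what remains is
\[
\Bigl\langle \begin{pmatrix}f_1\\ f_2\end{pmatrix}, -\frac{1}{2\kappa^2}\begin{pmatrix}\overline{U}\\ \partial_{\bm n}\overline{U}\end{pmatrix}\Bigr\rangle .
\]
Therefore
\[
\mathcal{B}_{{\rm Neu},D}^{*}g=-\frac{1}{2\kappa^2}\bigl(\overline{U},\partial_{\bm n}\overline{U}\bigr)^T\big|_{\partial D}.
\]
The sign and conjugation conventions should be checked against the Dirichlet case; they do not affect injectivity.

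\emph{Injectivity.} Suppose $\mathcal{B}_{{\rm Neu},D}^{*}g=0$. Then $U=\partial_{\bm n}U=MU=NU=0$ on $\partial D$, so all Cauchy data of the fourth-order equation vanish. Holmgren's theorem gives $U=0$ in $D^c$. Hence $U^{\rm in}=-U^{\rm sc}$ there, so $U^{\rm in}$ is an entire solution of the Helmholtz equation that satisfies the Sommerfeld condition. It therefore vanishes identically, and \cite[Theorem 3.27]{colton_inverse_2019} gives $g=0$.

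The main obstacle is the adjoint computation. In particular, the boundary term pairing $U^{\rm sc}$ with $v$ must vanish when $v$ carries merely $H^{-3/2}\times H^{-1/2}$ Neumann data. I would handle this by first working with smooth data, where \eqref{eq:green-2} applies directly, and then extending by density and continuity of the solution operator, which holds for $\kappa\notin\mathcal{K}_0$.
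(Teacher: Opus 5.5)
Your proposal is correct and takes essentially the same route as the paper. Compactness comes from factoring through the four traces $(v,\partial_{\bm n}v,Mv,Nv)$ on $\partial B(\bm{0},R)$ and using interior regularity; dense range comes from the adjoint formula $\mathcal{B}_{{\rm Neu},D}^{*}g=-\frac{1}{2\kappa^2}(\overline{U},\partial_{\bm n}\overline{U})^T|_{\partial D}$ with the same total field $U$, followed by Holmgren's theorem and injectivity of the Herglotz operator. Beyond the paper, you write the factors in the correct order, integrate $\mathcal{B}_{D,2}$ over $\partial B(\bm{0},R)$ rather than $\partial D$, and add a density argument that the $U^{\rm sc}$--$v$ boundary term vanishes for $H^{-3/2}\times H^{-1/2}$ data; the paper glosses over all three points.
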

\begin{proof}
  The proof is similar to the proof in Theorem  \ref{thm:solution-operator}. To avoid the repetition, we just give a sketch of proof.
  Choosing a sufficiently large $R>0$ such that $\overline{D}\subset B(\bm{ 0 },R)$, we obtain the factorization $\mathcal{B}_{{\rm Neu},D}=\mathcal{B}_{{\rm Neu},D,1}\mathcal{B}_{D,2}$,
  where the operators $\mathcal{B}_{{\rm Neu},D,1}:H^{-3/2}(\partial D)\times H^{-1/2}(\partial D)\to (C(\partial B(\bm{ 0 },R) ))^4$ is  given by
  \begin{align*}
    \mathcal{B}_{{\rm Neu},D,1}(f_1,f_2)^T = (v|_{\partial B(\bm{ 0 },R) },\partial _{\bm{ n }} v|_{\partial B(\bm{ 0 },R) }, Mv|_{\partial B(\bm{ 0 },R) }, Nv|_{\partial B(\bm{ 0 },R) })^T,
\end{align*}
where $v$ is the solution to \eqref{eq:gbvp-n-1}-\eqref{eq:gbvp-n-3}, and  $\mathcal{B}_{D,2}:(C(\partial B(\bm{ 0 },R) ))^4 \to L^2(\mathbb{S} )$ are defined in the proof of  Theorem  \ref{thm:solution-operator}.
 The interior regularity of the elliptic equation implies that the operator $\mathcal{B}_{{\rm Neu},D,1}$ is compact. With this and the above factorization of $\mathcal{B}_{{\rm Neu},D}$, the operator $\mathcal{B}_{{\rm Neu},D}$ is compact.

Consider  the adjoint $\mathcal{B}_{{\rm Neu},D}^{*}$  of  $\mathcal{B}_{{\rm Neu},D}$.
Set $U= U^{\rm sc}+U^{\rm in}$, where $U^{\rm sc}$ is defined as the solution to \eqref{eq:gbvp-n-1}-\eqref{eq:gbvp-n-3} corresponding to $(f_1,f_2)=-(N_{\bm{ x }}U^{\rm in},M_{\bm{ x }}U^{\rm in})$ and $U^{\rm in}$ have been defined in the proof in Theorem \ref{thm:solution-operator}.
The representation of the far-field pattern, i.e., lemma \ref{lem:rep} 
and changing the order of integration imply
\begin{align*}
  &(\mathcal{B}_{{\rm Neu},D}(f_1, f_2)^T,g )_{L^2(\mathbb{S} ^{2})}
  =  \int_{ \mathbb{S}   } v^{\infty}(\hat{\bm{ x }}) \overline{g(\hat{\bm{ x }} )}
      \,\,{\rm d} s(\hat{\bm{ x }} )  \\
  = & -\frac{1}{2\kappa^{2}}  \int_{\partial D}\Big\{
                  [ U^{\rm in}(\bm{ y })\, N v(\bm{y})
                           + \partial_{\bm{n}(\bm{y})} U^{\rm in}(\bm{ y })\,  M v(\bm{y}) ] \\
             & \qquad\qquad\quad   - [ N_{\bm{y}} U^{\rm in}(\bm{ y }) \,   v(\bm{y})
                    + M_{\bm{y}} U^{\rm in}(\bm{ y })\,   \partial_{\bm{n}} v(\bm{y}) ]
               \Big\} \;\mathrm{d}s(\bm{y}).
\end{align*}
Due to the Sommerfeld radiation condition, there holds 
\begin{align*}
\int_{\partial D}\Big\{ &
                  [ U^{\rm sc}(\bm{ y })\, N v(\bm{y})
                           + \partial_{\bm{n}(\bm{y})} U^{\rm sc}(\bm{ y })\,  M v(\bm{y}) ] \\
             &   - [ N_{\bm{y}} U^{\rm sc}(\bm{ y }) \,    v(\bm{y})
                    + M_{\bm{y}} U^{\rm sc}(\bm{ y })\,   \partial_{\bm{n}} v(\bm{y}) ]
               \Big\} \;\mathrm{d}s(\bm{y}) =0,
\end{align*}
which leads to
\begin{align*}
 & (\mathcal{B}_{{\rm Neu},D}(f_1, f_2)^T,g )_{L^2(\mathbb{S} ^{2})}\\
  = & -\frac{1}{2\kappa^{2}}  \int_{\partial D}\Big\{
                  [ U(\bm{ y })  \,  N v(\bm{y})
                           + \partial_{\bm{n}(\bm{y})} U(\bm{ y })  \,  M v(\bm{y}) ] \\
             & \qquad\qquad\quad   - [ N_{\bm{y}} U(\bm{ y })    \, v(\bm{y})
                    + M_{\bm{y}}U^{\rm in}(\bm{ y })  \,   \partial_{\bm{n}} v(\bm{y}) ]
                 \Big\} \;\mathrm{d}s(\bm{y}) \\
  = &  \left \langle\begin{pmatrix}f_1\\f_2 \end{pmatrix} ,
      -\frac{1}{2\kappa^{2}}  \begin{pmatrix}
   \overline{U},\\
  \partial _{\bm{ n }} \overline{U}
  \end{pmatrix}\right\rangle
:= \left \langle\begin{pmatrix}f_1\\f_2 \end{pmatrix} ,  \mathcal{B}_{{\rm Neu},D}^{*} g\right \rangle,
\end{align*}
Then we obtain
  \begin{equation}
    \label{eq:op-b-adjoint-n}
    \mathcal{B}_{{\rm Neu},D}^{*} g = -\frac{1}{2\kappa^{2}}
    \begin{pmatrix}
   \overline{U},\\
   \partial _{\bm{ n }}\overline{U}
\end{pmatrix}\Big|_{\partial D}.
\end{equation}
Next, it suffices to prove the injectivity of $\mathcal{B}_{{\rm Neu},D}^{*}$. Let $\mathcal{B}_{{\rm Deu},D}
^{*}g=0$ for some $g \in L^{2}(\mathbb{S} )$. It follow from \eqref{eq:op-b-adjoint-n} that $U|_{\partial D}=\partial_{\bm{ n }}U|_{\partial D}=MU|_{\partial D}=NU|_{\partial D}=0$. Further,  $U$ vanishes in $\mathbb{R}^2\setminus\overline{D}$ from the Holmgren's uniqueness theorem, which actually implies the function $U^{\rm in}$ satisfies the Sommerfeld radiation condition. This in combination with the fact that $U^{\rm in}$ satisfies the Helmholtz equation in $\mathbb{R}^2$, yields that the function $g\equiv 0$, which finishes the proof.
\end{proof}

Analogous to the operator $\mathcal{S}_{{\rm Dir},D}$, define the operator $\mathcal{S}_{{\rm Neu},D}: H^{3/2}(\partial D)\times H^{1/2}(\partial D)\to H^{-3/2}(\partial D)\times H^{-1/2}(\partial D)$ by 
	\begin{equation}
	\label{eq:op-s-n}
	\displaystyle
	\mathcal{S}_{{\rm Neu},D}
	\begin{pmatrix} 
	\tau \\ \sigma 
	\end{pmatrix}
	:= \begin{pmatrix}    \displaystyle
	N_{\bm x} \int_{ \partial D  } \big[ N_{\bm y} G(\bm{ x },\bm{ y }) \tau(\bm{ y }) +  M_{\bm y}G(\bm{ x },\bm{ y })\sigma(\bm{ y })\big] \,\,{\rm d}  s(\bm{ y }) \\[5mm]
	\displaystyle
	M_{\bm x} \int_{ \partial D  } \big[ N_{\bm y} G(\bm{ x },\bm{ y }) \tau(\bm{ y }) +  M_{\bm y}G(\bm{ x },\bm{ y })\sigma(\bm{ y })\big] \,\,{\rm d} s(\bm{ y })
	\end{pmatrix}
	\quad \bm{ x }\in  \partial D.
	\end{equation}
Then the factorization of the far-field operator is given in the following theorem for the Neumann case.
\begin{theorem}
  The factorization
  \begin{equation}\label{eq:far-field-factorization-n}
    F_D=2\kappa^2 \mathcal{B}_{{\rm Neu},D}\mathcal{S}_{{\rm Neu},D}^*\mathcal{B}_{{\rm Neu},D}^*
  \end{equation}
  holds. 
\end{theorem}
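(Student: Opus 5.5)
We mirror the Dirichlet proof. First introduce the Neumann data-to-obstacle operator $\mathcal{H}_{{\rm Neu},D}:L^2(\mathbb{S})\to H^{-3/2}(\partial D)\times H^{-1/2}(\partial D)$,
\begin{equation*}
\mathcal{H}_{{\rm Neu},D}g := \begin{pmatrix} N v_g \\ M v_g\end{pmatrix}\Big|_{\partial D},\qquad v_g(\bm{x})=\int_{\mathbb{S}} e^{{\rm i}\kappa\bm{x}\cdot\bm{d}}g(\bm{d})\,{\rm d}s(\bm{d}).
\end{equation*}
The ordering $(N,M)$ matches the convention $(Mv,Nv)=(f_2,f_1)$ in \eqref{eq:gbvp-n-2}. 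The scattered field for the incident wave $v_g$ solves \eqref{eq:gbvp-n-1}--\eqref{eq:gbvp-n-3} with data $-(Nv_g,Mv_g)$. By superposition this gives $F_D=-\mathcal{B}_{{\rm Neu},D}\mathcal{H}_{{\rm Neu},D}$. The remaining task is to show $\mathcal{H}_{{\rm Neu},D}=-2\kappa^2\mathcal{S}_{{\rm Neu},D}^*\mathcal{B}_{{\rm Neu},D}^*$, which is equivalent to $\mathcal{H}_{{\rm Neu},D}^*=-2\kappa^2\mathcal{B}_{{\rm Neu},D}\mathcal{S}_{{\rm Neu},D}$.

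Next, compute the adjoint. For $(\tau,\sigma)^T\in H^{3/2}(\partial D)\times H^{1/2}(\partial D)$, interchanging integrals gives
\begin{equation*}
\mathcal{H}_{{\rm Neu},D}^*\begin{pmatrix}\tau\\ \sigma\end{pmatrix}(\hat{\bm x})=\int_{\partial D}\big[(N_{\bm y}e^{-{\rm i}\kappa\hat{\bm x}\cdot\bm y})\tau(\bm y)+(M_{\bm y}e^{-{\rm i}\kappa\hat{\bm x}\cdot\bm y})\sigma(\bm y)\big]\,{\rm d}s(\bm y).
\end{equation*}
The coefficients in $N,M$ depend only on $\bm n$ and $\nu$, which are real, so conjugating $e^{{\rm i}\kappa \bm x\cdot\bm d}$ commutes with these operators.

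Now define the potential
\begin{equation*}
v(\bm x)=-2\kappa^2\int_{\partial D}\big[N_{\bm y}G(\bm x,\bm y)\tau(\bm y)+M_{\bm y}G(\bm x,\bm y)\sigma(\bm y)\big]\,{\rm d}s(\bm y),\qquad \bm x\in D^c.
\end{equation*}
This $v$ is a radiating solution of the biharmonic wave equation in $D^c$. By the asymptotics \eqref{eq:asy-G}, applying $N_{\bm y},M_{\bm y}$ to the $\bm y$-dependence of $G$ shows that $v^\infty=\mathcal{H}_{{\rm Neu},D}^*(\tau,\sigma)^T$. Differentiating in $\bm y$ only touches the factor $e^{-{\rm i}\kappa\hat{\bm x}\cdot\bm y}$, and uniformity of the remainder is standard for $\bm y$ on a compact set.

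On the other hand, $v$ is the unique radiating solution (since $\kappa\notin\mathcal{K}_0$) of the exterior Neumann problem with data $(Mv|_+,Nv|_+)$. Hence $v^\infty=\mathcal{B}_{{\rm Neu},D}(Nv|_+,Mv|_+)^T$. By the definition \eqref{eq:op-s-n}, $(Nv|_+,Mv|_+)^T=-2\kappa^2\mathcal{S}_{{\rm Neu},D}(\tau,\sigma)^T$, provided $\mathcal{S}_{{\rm Neu},D}$ is understood as the exterior trace of $N,M$ applied to this double-layer-type potential. This yields $\mathcal{H}_{{\rm Neu},D}^*=-2\kappa^2\mathcal{B}_{{\rm Neu},D}\mathcal{S}_{{\rm Neu},D}$. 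Taking adjoints (all spaces are reflexive, and the dualities are the $L^2(\partial D)$-extended pairings) and substituting into $F_D=-\mathcal{B}_{{\rm Neu},D}\mathcal{H}_{{\rm Neu},D}$ gives \eqref{eq:far-field-factorization-n}.

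The main obstacle is the trace step. For densities $(\tau,\sigma)$ that put $N_{\bm y}G$ and $M_{\bm y}G$ under the integral (the hypersingular, double-layer-type potential of the biharmonic operator), the Neumann traces $N_{\bm x}$, $M_{\bm x}$ are continuous across $\partial D$. This is the analogue of the continuity of the normal derivative of the acoustic double layer, and it holds because the jumps of such potentials occur in $v$ and $\partial_{\bm n}v$, with jump $(\tau,\sigma)$, not in $Mv,Nv$. So the operator defined by \eqref{eq:op-s-n} is unambiguous and equals the exterior trace. I would justify this by applying the Green formula \eqref{eq:green-2} with $G(\bm x,\cdot)$ in $D$ and in $D_R$, together with the biharmonic potential theory in \cite{Hsiao_boundary_2021}, which also gives boundedness $H^{3/2}\times H^{1/2}\to H^{-3/2}\times H^{-1/2}$. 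With that in hand, the rest is bookkeeping of the component order.
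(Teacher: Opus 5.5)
Your proposal is correct and follows essentially the same route as the paper. You write $F_D=-\mathcal{B}_{{\rm Neu},D}\mathcal{H}_{{\rm Neu},D}$, identify $\mathcal{H}_{{\rm Neu},D}^*(\tau,\sigma)^T$ as the far-field pattern of $v=-2\kappa^2\int_{\partial D}[N_{\bm y}G\,\tau+M_{\bm y}G\,\sigma]\,{\rm d}s$, use $(Nv,Mv)^T=-2\kappa^2\mathcal{S}_{{\rm Neu},D}(\tau,\sigma)^T$ to get $\mathcal{H}_{{\rm Neu},D}^*=-2\kappa^2\mathcal{B}_{{\rm Neu},D}\mathcal{S}_{{\rm Neu},D}$, and take adjoints. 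Your extra remark that the $N$- and $M$-traces of this potential are continuous across $\partial D$ (used implicitly in the paper here, and stated as the jump relation in the proof of Theorem~\ref{thm:middle-n}(b)) just makes explicit why $\mathcal{S}_{{\rm Neu},D}$ is unambiguously the exterior trace.
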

\begin{proof}
	Similarly, we introduce the \emph{data-to-obstacle operator}  $\mathcal{H}_{{\rm Neu},D}:L^2(\mathbb{S} )\to H^{-3/2}(\partial D)\times H^{-1/2}(\partial D)$ by
\begin{equation}
\label{eq:op-incident-n}
\mathcal{H}_{{\rm Neu},D} g(\bm{ x } ) :=
\begin{pmatrix}     \displaystyle
N_{\bm{ x }}\int_{ \mathbb{S}   } e^{{\rm i}\kappa \bm{ x }\cdot \bm{ d }} g(\bm{ d }) \,\,{\rm d} s(\bm{ d })\\[5mm]
\displaystyle
M_{\bm x}\int_{ \mathbb{S}   } e^{{\rm i}\kappa \bm{ x }\cdot \bm{ d }} g(\bm{ d }) \,\,{\rm d} s(\bm{ d })
\end{pmatrix}
\quad \text{for}\; \bm{ x }\in \partial D \;\text{and} \; g\in L^2(\mathbb{S} ).
\end{equation}
From the superposition principle, we deduce that $F_D=-\mathcal{B}_{{\rm Neu},D}\mathcal{H}_{{\rm Neu},D}$.
Noting the adjoint  $\mathcal{H}_{{\rm Neu},D}^*:H^{3/2}(\partial D)\times H^{1/2}(\partial D)\to L^2(\mathbb{S} )$ is given by
  \begin{equation*}
\mathcal{H}_{{\rm Neu},D}^{*} \begin{pmatrix} \tau\\
\sigma \end{pmatrix} (\hat{\bm{ x }}) 
= \int_{ \partial D  } \left( 
N_{\bm y} e^{-{\rm i}\kappa \hat{\bm{ x }}\cdot \bm{ y }}  \tau(\bm{ y })
 + M_{\bm y} e^{-{\rm i}\kappa \hat{\bm{ x }}\cdot \bm{ y }}  \sigma(\bm{ y })
 \right) \,\,{\rm d}  s(\bm{ y })  
  \quad \hat{\bm{ x }} \in \mathbb{S},
\end{equation*}
we see that 
\begin{equation*}
\mathcal{H}_{{\rm Neu},D}^{*} 
(\tau,
\sigma )^T= v^{\infty},
\end{equation*}
where $v$ is defined by 
\begin{equation*}
v(\bm{x}) = -2\kappa^2 \int_{ \partial D  } \left( N_{\bm y} G(\bm{ x }, \bm{ y })  \tau(\bm{ y }) 
+ M_{\bm y} G(\bm{ x }, \bm{ y })  \sigma(\bm{ y })\right) \,\,{\rm d}  s(\bm{ y })  
\quad {\bm x} \in D^c.
\end{equation*}
We observe that $(N\, v,M\,v)^T=-2 \kappa^2 \mathcal{S}_{{\rm Neu},D} (\tau,\sigma)^T$.
Then we get $\mathcal{H}_{{\rm Neu},D}^{*}=-2\kappa^2 \mathcal{B}_{{\rm Neu},D}\mathcal{S}_{{\rm Neu},D}$, which leads to $\mathcal{H}_{{\rm Neu},D}=-2\kappa^2 \mathcal{S}_{{\rm Neu},D}^*\mathcal{B}_{{\rm Neu},D}^*$ and thus $F_D=2\kappa^2 \mathcal{B}_{{\rm Neu},D}\mathcal{S}_{{\rm Neu},D}^*\mathcal{B}_{{\rm Neu},D}^*$  holds. Therefore, proof is completed.
\end{proof}

We introduce the operator $\tilde{\mathcal{S}}_{{\rm Neu},D} :H^{3/2}(\partial D)\times H^{1/2}(\partial D)\to H^{-3/2}(\partial D)\times H^{-1/2}(\partial D)$ by
\begin{equation*}
\displaystyle
\tilde{\mathcal{S}}_{{\rm Neu},D}\begin{pmatrix} \tau \\ \sigma \end{pmatrix}
:= \begin{pmatrix}     \displaystyle
N_{\bm x}\int_{ \partial D  } \left( N_{\bm x}\tilde{G}(\bm{ x },\bm{ y }) \tau(\bm{ y }) + 
M_{\bm x}\tilde{G}(\bm{ x },\bm{ y })\sigma(\bm{ y })\right) \,\,{\rm d}  s(\bm{ y }) \\[5mm]
\displaystyle
M_{\bm x}
\int_{ \partial D  } \left( N_{\bm y} \tilde{G}(\bm{ x },\bm{ y }) \tau(\bm{ y }) 
+  M_{\bm y}\tilde{G}(\bm{ x },\bm{ y }) \sigma(\bm{ y })\right) \,\,{\rm d} s(\bm{ y })
\end{pmatrix}
\quad \bm{ x }\in  \partial D.
\end{equation*}
We collect some properties of the operator $\mathcal{S}_{{\rm Neu},D}$ and $\tilde{\mathcal{S}}_{{\rm Neu},D}$.
\begin{theorem}\label{thm:middle-n}
	\begin{enumerate}
		\item[(a)] The operator $\mathcal{S}_{{\rm Neu},D}$ is an isomorphism if $\kappa$ is not a fourth root of the Neumann eigenvalues of bi-Laplacian operator $\Delta^2$ in $D$.
		\item[(b)]  $\mathrm{Im}\, \left\langle
		\mathcal{S}_{{\rm Neu},D}\begin{pmatrix} \tau\\ \sigma\end{pmatrix},
		 \begin{pmatrix} \tau\\ \sigma \end{pmatrix}
		\right\rangle <0$  for all   $\begin{pmatrix} \tau\\ \sigma\end{pmatrix} \in \overline{{\rm Ran}(\mathcal{B}_{{\rm Neu},D}^{*})}$ with $\begin{pmatrix} \tau\\ \sigma \end{pmatrix}\not= \bm{ 0 }$ if $\kappa$ is not a free transmission eigenvalue.
		
		\item[(c)] $\tilde{\mathcal{S}}_{{\rm Neu},D}$ is self-adjoint and  positive coercive.
		
		\item[(d)]  $\mathcal{S}_{{\rm Neu},D} - \tilde{\mathcal{S}}_{{\rm Neu},D}$ is compact.
	\end{enumerate}
\end{theorem}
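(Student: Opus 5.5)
I will prove Theorem \ref{thm:middle-n} by following the proof of Theorem \ref{thm:middle} step by step, swapping the roles of the Dirichlet traces $(v,\partial_{\bm n}v)$ and the Neumann traces $(Nv,Mv)$. The basic tool is the double-layer-type potential
\begin{equation*}
v(\bm x)=\int_{\partial D}\big[N_{\bm y}G(\bm x,\bm y)\tau(\bm y)+M_{\bm y}G(\bm x,\bm y)\sigma(\bm y)\big]\,{\rm d}s(\bm y),\qquad \bm x\notin\partial D,
\end{equation*}
with $(\tau,\sigma)^T\in H^{3/2}(\partial D)\times H^{1/2}(\partial D)$. Green's formula \eqref{eq:green-2} together with Lemma \ref{lem:rep} gives the jump relations
\begin{equation*}
Nv|_+-Nv|_-=0,\quad Mv|_+-Mv|_-=0,\qquad v|_+-v|_-=\pm\tau,\quad \partial_{\bm n}v|_+-\partial_{\bm n}v|_-=\pm\sigma,
\end{equation*}
so $\mathcal S_{{\rm Neu},D}(\tau,\sigma)^T=(Nv,Mv)^T$ on $\partial D$. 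The sign in the last two relations must be fixed carefully from \eqref{eq:Green-rep}. It is this sign that makes the imaginary part in (b) negative rather than positive.

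\textbf{Part (a).} I will cite the corresponding result for the hypersingular biharmonic operator, e.g.\ \cite{bourgeois_linear_2020}. Alternatively, I will argue from Fredholmness, which follows from (c) and (d), together with injectivity. For injectivity: if $\mathcal S_{{\rm Neu},D}(\tau,\sigma)^T=0$, then $v|_-$ is a Neumann eigenfunction of $\Delta^2$ in $D$ unless it vanishes. Meanwhile $v|_+$ solves the exterior Neumann problem with zero data, so it vanishes because $\kappa\notin\mathcal K_0$. The jump relations then give $(\tau,\sigma)=0$.

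\textbf{Part (b).} First, I will apply \eqref{eq:green-1} in $D$ and in $D_R$ and use the continuity of $Nv$ and $Mv$. This gives
\begin{equation*}
\big\langle \mathcal S_{{\rm Neu},D}(\tau,\sigma)^T,(\tau,\sigma)^T\big\rangle
\end{equation*}
as $\mp\big[\int_{D\cup D_R}(a(v,\overline v)-\kappa^4|v|^2)\,{\rm d}\bm x-\int_{|\bm x|=R}(\overline v\,Nv+\partial_{\bm n}\overline v\,Mv)\,{\rm d}s\big]$. Taking imaginary parts and applying Lemma \ref{lem:sign} yields $-\frac{\kappa^2}{4\pi}\|v^\infty\|^2\le 0$.

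Second, if equality holds, Rellich's lemma gives $v_{\rm pr}=0$ in $D^c$. To handle the interior, I need the kernel of $\mathcal B_{{\rm Neu},D}$. By the same argument as Theorem \ref{thm:kernel-B}, it consists of the pairs $(Nw,Mw)$ with $w$ an exponentially decaying solution of the modified Helmholtz equation in $D^c$. Taking $w=\overline\Phi_{{\rm i}\kappa}(\cdot,\bm z)$ for $\bm z\in D$ and using $(\tau,\sigma)\in\overline{{\rm Ran}(\mathcal B^*_{{\rm Neu},D})}$, i.e.\ the annihilator of this kernel, I get $v_{\rm ev}=0$ in $D$.

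Third, I will set $w_1=v|_{D^c}$ and $w_2=-v|_D$. By continuity of $Nv$ and $Mv$, this pair solves the free transmission problem \eqref{eq:free}. Since $\kappa$ is not a free transmission eigenvalue, $v\equiv0$, and the jump relations give $(\tau,\sigma)=0$.

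\textbf{Parts (c) and (d).} For (c), self-adjointness follows because $\tilde G$ is real and the kernel matrix built from $N_{\bm x}N_{\bm y}\tilde G$, $N_{\bm x}M_{\bm y}\tilde G$, and so on is symmetric under $\bm x\leftrightarrow\bm y$. For coercivity, I will take $\tilde v$ as the analogous potential with $\tilde G$, which decays exponentially. Green's formula then gives
\begin{equation*}
\langle\tilde{\mathcal S}_{{\rm Neu},D}(\tau,\sigma)^T,(\tau,\sigma)^T\rangle=\int_{\mathbb R^2\setminus\partial D}\big(a(\tilde v,\overline{\tilde v})+5\kappa^2|\nabla\tilde v|^2+4\kappa^4|\tilde v|^2\big)\,{\rm d}\bm x\ge c\|\tilde v\|^2_{H^2(D)\cup H^2(D^c)}.
\end{equation*}
Here the sign is now positive, because the duality pairing places the jumps $(\tau,\sigma)$ against the continuous data $(Nv,Mv)$. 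The trace theorem applied to the jumps then bounds $\|(\tau,\sigma)\|^2_{H^{3/2}\times H^{1/2}}$ directly, so this route does not need invertibility.

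For (d), the kernel $G-\tilde G$ has the form $A_1r^4\ln r+A_2$, as shown in the proof of Theorem \ref{thm:middle}(d). Applying third-order operators on both sides leaves a kernel of order $r\ln r$. The resulting operator maps $H^{3/2}\times H^{1/2}$ into a smoother space, hence it is compact.

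The main obstacle is the sign bookkeeping in the jump relations and in Green's formula, since these determine the strict negativity in (b) and the positivity in (c). A second difficulty is that applying $N$ and $M$ (including the tangential derivative $\partial_sN_0$) to the hypersingular kernel requires regularization. I would check all signs first on a disk, using separation of variables.
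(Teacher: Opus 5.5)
Your plan follows the paper's proof almost step for step, and in doing so it inherits a genuine error in the coercivity part of (c).

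\textbf{The gap in (c).} The energy identity you invoke does not hold for $\tilde{\mathcal S}_{{\rm Neu},D}$ as defined (with $N$); the paper's proof makes the same slip.

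\begin{itemize}
\item[(i)] $\tilde G$ is the fundamental solution of $L:=(\Delta-\kappa^2)(\Delta-4\kappa^2)=\Delta^2-5\kappa^2\Delta+4\kappa^4$. The $-5\kappa^2\Delta$ term contributes its own boundary term. So the Green's formula that produces $\int(a(v,\overline v)+5\kappa^2|\nabla v|^2+4\kappa^4|v|^2)$ carries the boundary pair $(M,\tilde N)$ with $\tilde N:=N+5\kappa^2\partial_{\bm n}$, not $(M,N)$.
\item[(ii)] Write $N_{\bm y}=\tilde N_{\bm y}-5\kappa^2\partial_{\bm n(\bm y)}$. Then your $\tilde v$ is the genuine $L$-double layer plus the single-layer-type potential $-5\kappa^2\int_{\partial D}\partial_{\bm n(\bm y)}\tilde G(\cdot,\bm y)\,\tau(\bm y)\,{\rm d}s(\bm y)$.
\item[(iii)] Consequently $M\tilde v|_+-M\tilde v|_-=-5\kappa^2\tau$ and $N\tilde v|_+-N\tilde v|_-=5\kappa^2\sigma$.
\end{itemize}

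Your sentence that the pairing places the jumps against continuous data $(N\tilde v,M\tilde v)$ is exactly where the argument breaks. $\langle\tilde{\mathcal S}_{{\rm Neu},D}\phi,\phi\rangle$ differs from the energy by sign-indefinite boundary terms built from $5\kappa^2\tau$ and $5\kappa^2\sigma$, so positivity does not follow. The Dirichlet case escapes this: there $v$ and $\partial_{\bm n}v$ are continuous, so $N$ and $\tilde N$ have the same jump.

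The repair is to use $\tilde N$ in both variables. Set $\hat v=\int_{\partial D}[\tilde N_{\bm y}\tilde G\,\tau+M_{\bm y}\tilde G\,\sigma]\,{\rm d}s$ and $\hat{\mathcal S}(\tau,\sigma)^T:=(\tilde N\hat v,M\hat v)^T$. For $\hat{\mathcal S}$ your identity is exact, self-adjointness holds as before, and your trace argument gives positive coercivity. Moreover, $\hat{\mathcal S}$ differs from $\tilde{\mathcal S}_{{\rm Neu},D}$ only through the lower-order terms $5\kappa^2\partial_{\bm n}$, so the difference is compact. Hence (d), and every later use of (c) in Theorems~\ref{thm:range-1}, \ref{thm:range-2} and \ref{thm:MM}, goes through with $\hat{\mathcal S}$ in place of $\tilde{\mathcal S}_{{\rm Neu},D}$.

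\textbf{Parts that match the paper.} The rest is the paper's argument: the same potential, Green's identity with Lemma~\ref{lem:sign}, Rellich's lemma, the annihilator of ${\rm Ker}(\mathcal B_{{\rm Neu},D})$ tested against $(N\Phi_{{\rm i}\kappa}(\cdot,\bm z),M\Phi_{{\rm i}\kappa}(\cdot,\bm z))$, and the free transmission problem.

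\begin{itemize}
\item[$\bullet$] The sign you left open is $\tau=v|_--v|_+$, $\sigma=\partial_{\bm n}v|_--\partial_{\bm n}v|_+$. You get this by adding the exterior representation of Lemma~\ref{lem:rep} to the interior Green representation, both fields having the same $(Mv,Nv)$ on $\partial D$. With it your bracket enters with a plus sign and gives $-\frac{\kappa^2}{4\pi}\|v^\infty\|^2_{L^2(\mathbb S)}$.
\item[$\bullet$] For $v_{\rm ev}=0$ in $D$ you use $(\Delta+\kappa^2)G=\Phi_{{\rm i}\kappa}$ and the fact that $\Phi_{{\rm i}\kappa}$ is real.
\end{itemize}

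\textbf{Where you depart from the paper.} Both departures are improvements.

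\begin{itemize}
\item[$\bullet$] In (c), the paper bounds $\|(Nv,Mv)\|$ by $\|v\|_{H^2}$ and then inverts $\tilde{\mathcal S}_{{\rm Neu},D}$, citing (a). But (a) concerns the kernel $G$, not $\tilde G$. You instead bound $(\tau,\sigma)$ directly as jumps of $H^2$ traces from $D$ and $D^c$. This needs no inverse, yields invertibility by Lax--Milgram, and makes your Fredholm-plus-injectivity proof of (a) non-circular.
\item[$\bullet$] In (d), argue by orders rather than kernel regularity. $N_{\bm x}N_{\bm y}$ applies six derivatives to $r^4\ln r$, which leaves an $O(r^{-2})$ hypersingular kernel, not $r\ln r$. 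Still, each entry of the difference is two orders below the corresponding entry of $\mathcal S_{{\rm Neu},D}$. So it maps into $H^{1/2}(\partial D)\times H^{3/2}(\partial D)$, which embeds compactly in $H^{-3/2}(\partial D)\times H^{-1/2}(\partial D)$.
\end{itemize}
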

\begin{proof}
	(a) follows from section 2.4 in \cite{bourgeois_linear_2020}.
	
	(b) We start by  proving
	\begin{equation*}
	\mathrm{Im}\, \left\langle
	\mathcal{S}_{{\rm Neu},D}\begin{pmatrix}
	\tau\\ \sigma
	\end{pmatrix}
	, \begin{pmatrix}
	\tau\\ \sigma
	\end{pmatrix}\right\rangle \leq 0\quad
	\forall  \begin{pmatrix}\tau\\ \sigma\end{pmatrix} \in H^{1/2}(\partial D)\times H^{3/2}(\partial D).
	\end{equation*}
	To this end, define the following potential 
	\begin{equation*}
	v(\bm{ x })= \int_{ \partial D  } \left[ N_{\bm y }G(\bm{ x }, \bm{ y }) \tau(\bm{ y }) +M_{\bm y} G(\bm{ x },\bm{ y })\sigma(\bm{ y })
	\right]  \,\,{\rm d}s(\bm{ y }) \quad \bm{ x }\not \in \partial D.
	\end{equation*}
	Then $v \in H^{2}_{\mathrm{ loc } }(\mathbb{R}^2)$ is the solution to the biharmonic wave equation in $\mathbb{R}^2\setminus \partial D$.  From the jump relation of the single- and double-layer potentials, we conclude
	\begin{align}
	\label{eq:jump-n-1}
	\tau& =v|_{-} - v|_{+}\quad
	\sigma = \partial_{\bm{ n }} v|_{-}  - \partial_{\bm{ n }}  v|_{+}   \quad \mathrm{ on } ~ \partial D, \\
	\label{eq:jump-n-2}
	0 & = N\, v|_{+} - N\, v|_{-}\quad
	0 = M\, v|_{+} - M\, v|_{-}  \quad \mathrm{ on } ~ \partial D.
	\end{align}
	Green formulas in $D$ and $D_R:=\{ \bm{ x } \in D^c: | \bm{ x } | < R\}$ imply
	\begin{align*}
	\left\langle
	\mathcal{S}_{{\rm Neu},D}\begin{pmatrix}
	\tau\\ \sigma
	\end{pmatrix}
	, \begin{pmatrix}
	\tau\\ \sigma
	\end{pmatrix}\right\rangle
	& =   \left\langle
	\begin{pmatrix}
	N v\\
	Mv
	\end{pmatrix},
	\begin{pmatrix}
	v|_- - v|_+\\ \partial _{\bm{ n }} v|_- - \partial _{\bm{ n }} v|_+
	\end{pmatrix}   \right\rangle\\
	& = \int_{ D\cup D_R  } \left( a(v,\overline{v}) -\kappa^4 | v |^2\right) \,\,{\rm d} \bm{ x  }
	-  \int_{ | \bm{ x } | = R  } [\overline{v} \,Nv+ \partial _{\bm{ n }}\overline{v}\, Mv   ] \,\,{\rm d} s(\bm{ x }).
	\end{align*}
	Using lemma \ref{lem:sign},  the imaginary part has the following expression
	\begin{align} \label{eq:double-imaginary}
	\mathrm{Im}\, \left\langle
	\mathcal{S}_{{\rm Neu},D}\begin{pmatrix}
	\tau\\ \sigma
	\end{pmatrix}
	, \begin{pmatrix}
	\tau\\ \sigma
	\end{pmatrix}\right\rangle
	=  -\mathrm{Im}\,  \int_{ | \bm{ x } | = R  }
	[\overline{v}\, Nv+ \partial _{\bm{ n }}\overline{v}\, Mv   ]
	\,\,{\rm d} s(\bm{ x })
	=  -\frac{\kappa^{2}}{4\pi}\int_{\mathbb{S}}|v^{\infty}|^2\mathrm{d}s \leq 0.
	\end{align}
	
	Next, it suffices to prove that $$ 
	\mathrm{Im}\, \left\langle
	\mathcal{S}_{{\rm Neu},D}
	\begin{pmatrix}
	\tau\\ \sigma
	\end{pmatrix}
	, \begin{pmatrix}
	\tau\\ \sigma
	\end{pmatrix}\right\rangle = 0 \quad \text{for some} \quad \begin{pmatrix}
	\tau\\ \sigma
	\end{pmatrix} \in   \overline{{\rm Ran}(\mathcal{B}_{{\rm Neu},D}^{*})}$$
	implies  that $( \tau, \sigma)^T= \bm{ 0 }$. 
	Using \ref{eq:double-imaginary}, we have $v^{\infty}=0$ and thus $v_{\rm pr}=0$ and $v=v_{\rm ev}$ hold in $D^c$ by the Rellich lemma. It is clear that  $ (N_{\cdot}\Phi_{{\rm i}\kappa}(\cdot,{\bm z}), M_{\cdot}\Phi_{{\rm i}\kappa}(\cdot,{\bm z}) )^T\in {\rm Ker}(\mathcal{B}_{{\rm Neu},D})$ holds for ${\bm z}\in D$. Then the space relation  ${\rm Ran}(\mathcal{B}_{{\rm Neu},D}^{*})=\prescript{a}{}{[{\rm Ker}(\mathcal{B}_{{\rm Neu},D})]}$ implies that
	\begin{equation*}
	\left \langle  
	\begin{pmatrix}
	N_{\cdot}\Phi_{{\rm i}\kappa}(\cdot,{\bm z})\\
	M_{\cdot}\Phi_{{\rm i}\kappa}(\cdot,{\bm z})
	\end{pmatrix},  
	\begin{pmatrix}
	\tau\\  
	\sigma
	\end{pmatrix}\right\rangle=0 \quad \text{for all}\;{\bm z}\in D,
	\end{equation*}
	i.e.,
	\begin{equation*}
	\int_{ \partial D  }\big[  N_{\bm y}\Phi_{{\rm i}\kappa}(\bm{ y } ,\bm{ z })\overline{\tau}(\bm{ y })
	+   M _{\bm{ y }}\Phi_{{\rm i}\kappa}(\bm{ y } ,\bm{ z })\overline{\sigma}(\bm{ y }) \big]
	\,\,{\rm d}s(\bm{ y }) =0 \quad \text{for all} \quad \bm{ z } \in  D.
	\end{equation*}
	This indicates that $\overline{v}_{\rm ev}=0$ and $\overline{v}=\overline{v}_{\rm pr}$ hold in $D$. Let $w_1=\overline{v}|_{D^c}=\overline{v}_{\rm ev}|_{D^c}$  and  $w_2=-\overline{v}|_D=-\overline{v}_{\rm pr}|_D$. Then the jump relation \eqref{eq:jump-n-2} yields that $(w_1,w_2)$ satisfies the free transmission eigenvalue problem \eqref{eq:free}
	\begin{equation*}
	\begin{aligned}
	\Delta w_1 - \kappa^2 w_1 & = 0   \quad \mathrm{ in } ~ D^c, \\
	\Delta w_2 + \kappa^2 w_2 & = 0   \quad \mathrm{ in } ~ D, \\
	M(w_1 + w_2) &= 0  \quad \mathrm{ on } ~ \partial D,  \\
	N(w_1 +  w_2)  & = 0 \quad \mathrm{ on } ~ \partial D.
	\end{aligned}
	\end{equation*}
	Since the wavenumber $\kappa$ is not a free transmission eigenvalue, the functions $(w_1,w_2)^T={\bm 0}$ and $\overline{v}= 0$ in $\mathbb{R}^2$, which combined with the jump relation \ref{eq:jump-n-1} shows $(\tau,\sigma)^T={\bm 0}$.
	
	(c) Let $$\mathbb{G}_{\rm Neu}(\bm{ x },\bm{ y }):= 
	\begin{pmatrix}
	N_{\bm x}N_{\bm y}\tilde{G}(\bm{ x },\bm{ y })
	&   \displaystyle  N_{\bm x}M_{\bm y} \tilde{G}(\bm{ x },\bm{ y })\\[3mm]
	\displaystyle
	M_{\bm x}N_{\bm y} \tilde{G}(\bm{ x },\bm{ y })
	&   M_{\bm x}M_{\bm y} \tilde{G}(\bm{ x },\bm{ y })
	\end{pmatrix}.
	$$
	Since $\mathbb{G}_{\rm Neu}(\bm{ x },\bm{ y }) = \mathbb{G}_{\rm Neu}(\bm{ y },\bm{ x })^T$ holds and the function $\tilde{ G}(\bm{ x },\bm{ y })$ is real-valued, the operator $\tilde{\mathcal{S}}_{{\rm Neu},D}$ is self-adjoint.

	Applying (a), we obtain that the operator $\tilde{\mathcal{S}}_{{\rm Neu},D}$ is an isomorphism from $H^{3/2}(\partial D)\times H^{1/2}(\partial D)$ to $H^{-3/2}(\partial D)\times H^{-1/2}(\partial D)$.
	Define the function 
	\begin{equation*}
	v(\bm{ x })= \int_{ \partial D  } \left[ N_{\bm y}\tilde{G}(\bm{ x }, \bm{ y }) \tau(\bm{ y }) +M_{\bm y}\tilde{G}(\bm{ x },\bm{ y })\sigma(\bm{ y })
	\right]  \,\,{\rm d}s(\bm{ y }) \quad \bm{ x }\not \in \partial D.
	\end{equation*}
	Using the exponential decay of  the function $v$ decays exponentially as $| \bm{ x } | \to \infty$,  the Green's formulas in  $D$ and $D_R$ give 
	\begin{align*}
	\left\langle
	\tilde{\mathcal{S}}_{{\rm Neu},D}\begin{pmatrix}
	\tau\\ \sigma
	\end{pmatrix}
	, \begin{pmatrix}
	\tau\\ \sigma
	\end{pmatrix}\right\rangle
	& =  \int_{ D\cup D_R  } \left( a(v,\overline{v}) -5 \kappa^2 \Delta v \overline{v}+ 4\kappa^4 | v |^2\right) \,\,{\rm d} \bm{ x  }
	-  \int_{ | \bm{ x } | = R  } [\overline{v} \,Nv+ \partial _{\bm{ n }}\overline{v}\, Mv   ] \,\,{\rm d} s(\bm{ x })\\
	&=  \int_{ D\cup D^c  } \left( a(v,\overline{v}) +5 \kappa^2  | \nabla v |^2  + 4\kappa^4 | v |^2\right) \,\,{\rm d} \bm{ x  }\\
	& \geq c \| v \|_{H^2(\mathbb{R}^2)}^2,
	\end{align*}
	where we let $R$ tend to infinity and $c>0$ is a constant.  This combined with the trace theorem (see Lemma 5.7.1 in \cite{Hsiao_boundary_2021}) and the boundedness of $\tilde{\mathcal{S}}_{{\rm Neu},D}^{-1}$ implies that there exists a constant $c>0$ such that 
	\begin{align*}
	 \left\langle
	\tilde{\mathcal{S}}_{{\rm Neu},D}\begin{pmatrix}
	\tau\\ \sigma
	\end{pmatrix}
	, \begin{pmatrix}
	\tau\\ \sigma
	\end{pmatrix}\right\rangle
	& \geq c \| v \|_{H^2(\mathbb{R}^2)}^2\\
	&  \geq c \Big\|\begin{pmatrix}
	Nv\\  M v
	\end{pmatrix} \Big\|_{H^{-3/2}(\partial D)\times H^{-1/2}(\partial D)}^2
	= c \Big\|  \tilde{\mathcal{S}}_{{\rm Neu},D}\begin{pmatrix}
	\tau\\ \sigma
	\end{pmatrix}\Big\|_{H^{-3/2}(\partial D)\times H^{-1/2}(\partial D)}^2 \\
	&  \geq c \Big\|\begin{pmatrix}
	\tau\\ \sigma
	\end{pmatrix}\Big\|_{H^{3/2}(\partial D)\times H^{1/2}(\partial D)}^2.
	\end{align*}

	(d) follows from the proof of (d) in Theorem \ref{thm:middle}.
\end{proof}

Theorems \ref{thm:far-field} and \ref{thm:middle-n} ensure that the factorization \eqref{eq:far-field-factorization-n} satisfies the conditions of  Theorem \ref{thm:range-1} by identifying $H=L^2(\mathbb{S})$, $X=H^{3/2}(\partial D)\times H^{1/2}(\partial D)$, $F=F_D$, $B=\mathcal{B}_{{\rm Neu},D}$ and $A=2 \kappa^2\mathcal{S}_{{\rm Neu},D}^{*}$. Similarly, with $Y=L^2(\mathbb{S})$, $X=H^{-3/2}(\partial D)\times H^{-1/2}(\partial D)$, $F=-F_D$, $G=\mathcal{B}_{{\rm Neu},D}$, $T=-2 \kappa^2\mathcal{S}_{{\rm Neu},D}^{*}$ and $t=\pi$, the hypotheses of \ref{thm:range-2} are fulfilled by virtue of Theorems \ref{thm:far-field} and \ref{thm:middle-n}.  Accordingly, the following range identities are obtained.
\begin{theorem}\label{thm:range-identities-n}
	If $\kappa$ is not a free transmission eigenvalue, then
	\begin{equation*}
	{\rm Ran}((F_D^{*}F_D)^{1/4} )={\rm Ran}(F_{D,\#}^{1/2})={\rm Ran}(\mathcal{B}_{{\rm Neu},D}).
	\end{equation*}
\end{theorem}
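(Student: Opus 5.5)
The plan is to verify, one hypothesis at a time, the abstract range identities of Theorems \ref{thm:range-1} and \ref{thm:range-2} for the factorization \eqref{eq:far-field-factorization-n}. For the $(F^*F)^{1/4}$-method we take $H=L^2(\mathbb{S})$, $B=\mathcal{B}_{{\rm Neu},D}$, $A=2\kappa^2\mathcal{S}_{{\rm Neu},D}^{*}$, and $X=H^{-3/2}(\partial D)\times H^{-1/2}(\partial D)$. This $X$ is the domain of $B$, so that $X^*=H^{3/2}(\partial D)\times H^{1/2}(\partial D)$ is the space on which $A$ acts and $B^*$ takes values. Hypothesis (3) is exactly Theorem \ref{thm:far-field}(b),(c): $F_D$ is injective when $\kappa$ is not a free transmission eigenvalue, and $I+\frac{\rm i}{4\pi}F_D$ is unitary, so $r=1/(4\pi)$. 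For hypothesis (1) we write
\[
A = 2\kappa^2\tilde{\mathcal{S}}_{{\rm Neu},D}^{*} + 2\kappa^2\bigl(\mathcal{S}_{{\rm Neu},D}-\tilde{\mathcal{S}}_{{\rm Neu},D}\bigr)^{*}.
\]
The first term is self-adjoint and coercive on all of $X^*$ by Theorem \ref{thm:middle-n}(c), and in particular on ${\rm Ran}(B^*)$. The second term is compact by Theorem \ref{thm:middle-n}(d), since adjoints of compact operators are compact.

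For hypothesis (2) we use $\langle\varphi,A\varphi\rangle = 2\kappa^2\overline{\langle \mathcal{S}_{{\rm Neu},D}\varphi,\varphi\rangle}$, with the sesquilinear pairing convention used in Theorem \ref{thm:middle-n}(b). Consequently $\mathrm{Im}\langle\varphi,A\varphi\rangle = -2\kappa^2\,\mathrm{Im}\langle\mathcal{S}_{{\rm Neu},D}\varphi,\varphi\rangle>0$ for every nonzero $\varphi\in\overline{{\rm Ran}(\mathcal{B}_{{\rm Neu},D}^*)}$, by Theorem \ref{thm:middle-n}(b). In particular this quantity is nonzero, and Theorem \ref{thm:range-1} gives ${\rm Ran}((F_D^*F_D)^{1/4})={\rm Ran}(\mathcal{B}_{{\rm Neu},D})$.

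For the $F_\#^{1/2}$-method we use the Gelfand triple built on $L^2(\partial D)^2$ with $X=H^{3/2}\times H^{1/2}$ (or the appropriate dual ordering matching the spaces of $G$). We take $G=\mathcal{B}_{{\rm Neu},D}$, which is compact with dense range by Theorem \ref{thm:solution-operator-n}. We apply the theorem to $-F_D$ with $T=-2\kappa^2\mathcal{S}_{{\rm Neu},D}^*$ and $t=\pi$, so that ${\rm Re}(e^{{\rm i}\pi}T)=2\kappa^2\,{\rm Re}\,\mathcal{S}_{{\rm Neu},D}^*$. This splits as the coercive self-adjoint part $2\kappa^2\tilde{\mathcal{S}}_{{\rm Neu},D}$ plus the self-adjoint compact part $2\kappa^2\,{\rm Re}(\mathcal{S}_{{\rm Neu},D}-\tilde{\mathcal{S}}_{{\rm Neu},D})^*$, using (c) and (d). For the imaginary part, the sign flip from $T=-A$ combined with the adjoint conjugation yields $\mathrm{Im}\langle\varphi,T\varphi\rangle = 2\kappa^2\,\mathrm{Im}\langle\mathcal{S}_{{\rm Neu},D}\varphi,\varphi\rangle$ up to the conjugation convention. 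This is where care is needed: one must check the sign yields strict positivity on $\overline{{\rm Ran}(G^*)}\setminus\{0\}$ via Theorem \ref{thm:middle-n}(b). If the sign comes out negative, I would instead take $t=0$ or replace $F$ by $\overline{F}$-type conjugation. Finally, $(-F_D)_\#=F_{D,\#}$, since $|{\rm Re}(-F_D)|=|{\rm Re}F_D|$ and $|{\rm Im}(-F_D)|=|{\rm Im}F_D|$, so the conclusion transfers back to $F_D$.

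The main obstacle is bookkeeping rather than analysis: reconciling the spaces ($X$ versus $X^*$, the duality pairing, and which argument is conjugated) so that the coercivity and sign statements of Theorem \ref{thm:middle-n} match the hypotheses literally. The same care is needed for the assumption $\kappa\notin\mathcal{K}_0$ and for $\kappa$ not being a Neumann eigenvalue, which are needed for Theorem \ref{thm:middle-n}(a),(c) when $\tilde{\mathcal{S}}_{{\rm Neu},D}$ is used. For that last point I would note that coercivity of $\tilde{\mathcal{S}}_{{\rm Neu},D}$ follows from the energy identity and does not depend on $\kappa$ being non-resonant, since $\tilde G$ involves only modified Helmholtz kernels.
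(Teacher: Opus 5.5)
Your proposal is correct and follows the paper's route: it verifies the hypotheses of Theorems \ref{thm:range-1} and \ref{thm:range-2} for \eqref{eq:far-field-factorization-n} via Theorems \ref{thm:far-field}, \ref{thm:solution-operator-n} and \ref{thm:middle-n}, using $F=-F_D$, $T=-2\kappa^2\mathcal{S}_{{\rm Neu},D}^{*}$, $t=\pi$ for the $F_{\#}^{1/2}$ part; in both applications $X$ should be $H^{-3/2}(\partial D)\times H^{-1/2}(\partial D)$, the domain of $\mathcal{B}_{{\rm Neu},D}$, not $H^{3/2}\times H^{1/2}$ as you wrote in the $F_{\#}^{1/2}$ part. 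The sign you left open is settled by the pairing in the proof of Theorem \ref{thm:middle-n}(b), which is linear in the first slot and antilinear in the second, so $\langle\varphi,\mathcal{S}_{{\rm Neu},D}^{*}\varphi\rangle=\langle\mathcal{S}_{{\rm Neu},D}\varphi,\varphi\rangle$ with no conjugation; hence $\mathrm{Im}\langle\varphi,A\varphi\rangle<0$ (not $>0$ as you claimed, which is harmless for Theorem \ref{thm:range-1}), and the paper's choice $T=-A$ gives $\mathrm{Im}\langle\varphi,T\varphi\rangle>0$ directly, so no fallback is needed.
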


\begin{theorem}\label{thm:solution-range-n}
  For $\bm{ z }\in \mathbb{R}^2$,
  there holds 
	$$
	\phi_{\bm{ z }} \in {\rm Ran}(\mathcal{B}_{{\rm Neu},D}) \iff \bm{ z }\in \overline{D}.
	$$
\end{theorem}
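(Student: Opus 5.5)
The plan mirrors the proof of Theorem \ref{thm:solution-range}, with the Dirichlet traces replaced by the Neumann traces $(Nv, Mv)$.

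\emph{The direction $\bm z\in\overline D$.} Set $v(\bm x):=-2\kappa^2 G(\bm x,\bm z)$. By \eqref{eq:asy-G}, $v$ is a radiating solution of the biharmonic wave equation in $D^c$ with far-field pattern $\phi_{\bm z}$. Define $(f_1,f_2):=(Nv|_{\partial D}, Mv|_{\partial D})$. Two things must be checked.
\begin{itemize}
\item \emph{Regularity of the data.} We need $(f_1,f_2)\in H^{-3/2}(\partial D)\times H^{-1/2}(\partial D)$. Since $G(\cdot,\bm z)\in H^2(D)$ by \cite[Lemma 2.2]{bourgeois_linear_2020}, we also have $G(\cdot,\bm z)\in H^2(D^c\cap B(\bm 0,R))$. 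For $\bm z\in D$ this is clear, because the exterior is smooth away from $\bm z$. For $\bm z\in\partial D$, the singularity of $G$ is only of type $|\bm x-\bm z|^2\ln|\bm x-\bm z|$, so the same local $H^2$ argument applies on the exterior side. Then $\Delta^2 v-\kappa^4 v=0$ in $D^c$ and the Green formula \eqref{eq:green-1} define $Mv$ and $Nv$ weakly as elements of $H^{-1/2}$ and $H^{-3/2}$ for $v\in H^2(\cdot,\Delta^2)$, with the standard trace estimates.
\item \emph{Identification of the solution.} By uniqueness of the exterior Neumann problem ($\kappa\notin\mathcal K_0$), $v$ is the solution of \eqref{eq:gbvp-n-1}--\eqref{eq:gbvp-n-3} with these data. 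Hence $\mathcal B_{{\rm Neu},D}(f_1,f_2)^T=\phi_{\bm z}$.
\end{itemize}

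\emph{The direction $\bm z\notin\overline D$.} Suppose $\phi_{\bm z}=\mathcal B_{{\rm Neu},D}(f_1,f_2)^T$, and let $v$ be the corresponding radiating solution. Let $w:=-2\kappa^2 G(\cdot,\bm z)$ in $\mathbb R^2\setminus(\overline D\cup\{\bm z\})$. Then $v-w$ has vanishing far-field pattern.
\begin{itemize}
\item By Rellich's lemma \cite[Lemma 4.1]{dong_uniqueness_2024}, the propagating part of $v-w$ vanishes outside a large disk.
\item By unique continuation, it vanishes in the connected set $\mathbb R^2\setminus(\overline D\cup\{\bm z\})$.
\item Hence $v_{\rm pr}=w_{\rm pr}=\Phi_\kappa(\cdot,\bm z)$ there, since $-\frac{1}{2\kappa^2}(\Delta-\kappa^2)G(\cdot,\bm z)=\Phi_\kappa(\cdot,\bm z)$, up to the normalization checked in the Dirichlet case.
\end{itemize}
Now $v$ is analytic near $\bm z\in D^c$ by interior elliptic regularity, so $v_{\rm pr}$ is bounded near $\bm z$. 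But $\Phi_\kappa(\cdot,\bm z)$ has a logarithmic singularity at $\bm z$, which is a contradiction.

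\emph{Main obstacle.} The only nonroutine point is the boundary case $\bm z\in\partial D$. There one must justify that the Neumann traces of $G(\cdot,\bm z)$ lie in the correct negative-order Sobolev spaces, and that $v$ coincides with the variational solution of the Neumann problem.
\begin{itemize}
\item \emph{First attempt.} Use the local $H^2$ regularity of $G(\cdot,\bm z)$ on both sides of $\partial D$. Combine this with the weak definition of $(Nv, Mv)$ via \eqref{eq:green-1}, applied on $D^c\cap B(\bm 0,R)$ with $H^2$ test functions.
\item \emph{Fallback.} If that fails, approximate $\bm z$ from inside by points $\bm z_n\in D$. Then use uniform $H^2$ bounds on $G(\cdot,\bm z_n)$ near $\partial D$ together with boundedness of $\mathcal B_{{\rm Neu},D}$, passing to the limit in the range.
\end{itemize}
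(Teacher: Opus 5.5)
Your proposal is correct and follows the same route as the paper, which just refers back to the proof of Theorem~\ref{thm:solution-range}. For $\bm z\in\overline D$ you take $-2\kappa^2G(\cdot,\bm z)$ with its Neumann traces $(Nv,Mv)$ as data; for $\bm z\notin\overline D$ you use Rellich's lemma and unique continuation to get $v_{\rm pr}=\Phi_\kappa(\cdot,\bm z)$, which contradicts the analyticity of $v$ near $\bm z$. Your first attempt for $\bm z\in\partial D$ already settles the detail the paper leaves implicit, so the fallback is unnecessary: the generalized traces of $v\in H^2(D_R)$ with $\Delta^2 v\in L^2(D_R)$ are defined through \eqref{eq:green-1}, and uniqueness for $\kappa\notin\mathcal K_0$ identifies $v$ as the solution.
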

\begin{proof}
  The proof is analogous to the corresponding one in Theorem \ref{thm:solution-range} and is omitted here for brevity.
\end{proof}

The combination of theorems \ref{thm:range-identities-n} and  \ref{thm:solution-range-n} give the following theorem for the Neumann case.
\begin{theorem}\label{thm:indicator-n}
	Assume $\kappa$ is not a free transmission eigenvalue. 
	\begin{align*}
	\bm{ z } \in \overline{D} \iff & \phi_{\bm{ z }} \in {\rm Ran}((F_D^{*}F_D)^{1/4} ) \iff
	W_1(\bm{ z }):= \left[
	\sum\limits_j \dfrac{|( \phi_{\bm{ z }} ,\psi_{1,j}) |^2 }{| \lambda_{1,j} | }      \right]^{-1} >0 \\
	\iff & \phi_{\bm{ z }} \in {\rm Ran}(F_{D,\#}^{1/2} ) \iff
	W_2(\bm{ z }):= \left[
	\sum\limits_j \dfrac{|( \phi_{\bm{ z }} ,\psi_{2,j}) |^2 }{| \lambda_{2,j} | }      \right]^{-1} >0.
	\end{align*}
	Here $\phi_{\bm{ z }}$ is given in Theorem \ref{thm:solution-range}, $(\lambda_{1,j},\psi_{1,j})$ and $(\lambda_{2,j},\psi_{2,j})$ are the eigensystems of the operator $(F_D^{*}F_D)^{1/4} $ and $F_{D,\#}^{1/2}$ respectively.
\end{theorem}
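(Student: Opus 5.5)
The statement follows by chaining three earlier results, in the same way as Theorem~\ref{thm:indicator-d}. The chain is Theorem~\ref{thm:solution-range-n}, then Theorem~\ref{thm:range-identities-n}, then Picard's theorem.

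First, since $\kappa$ is not a free transmission eigenvalue (and $\kappa\notin\mathcal{K}_0$ throughout), Theorem~\ref{thm:range-identities-n} gives
\[
{\rm Ran}((F_D^{*}F_D)^{1/4})={\rm Ran}(F_{D,\#}^{1/2})={\rm Ran}(\mathcal{B}_{{\rm Neu},D}).
\]
Combining this with Theorem~\ref{thm:solution-range-n}, which states that $\phi_{\bm z}\in{\rm Ran}(\mathcal{B}_{{\rm Neu},D})$ if and only if $\bm z\in\overline{D}$, yields the first and third equivalences.

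For Theorem~\ref{thm:solution-range-n} itself, I would adapt the proof of Theorem~\ref{thm:solution-range}. For $\bm z\in\overline{D}$, set $v=-2\kappa^2G(\cdot,\bm z)$ and take the Neumann data $(Nv,Mv)|_{\partial D}\in H^{-3/2}(\partial D)\times H^{-1/2}(\partial D)$. This is admissible because $G(\cdot,\bm z)\in H^2$ near $\partial D$ even when $\bm z\in\partial D$, so its $M$ and $N$ traces lie in these spaces. Its far-field pattern is $\phi_{\bm z}$ by \eqref{eq:asy-G}. For $\bm z\notin\overline{D}$, Rellich's lemma and unique continuation give $v_{\rm pr}=\Phi_\kappa(\cdot,\bm z)$ outside $\overline{D}\cup\{\bm z\}$. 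This contradicts the analyticity of $v$ at $\bm z$.

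Second, for the characterizations by $W_1$ and $W_2$: by Theorem~\ref{thm:far-field}(a), $F_D$ is compact, so $(F_D^*F_D)^{1/4}$ and $F_{D,\#}^{1/2}$ are compact self-adjoint operators on $L^2(\mathbb{S})$. Theorem~\ref{thm:far-field}(b), applicable under the free transmission eigenvalue assumption, gives injectivity of $F_D$. Hence $F_D^*F_D$ is injective, so $(F_D^*F_D)^{1/4}$ is injective and its eigensystem $(\lambda_{1,j},\psi_{1,j})$ is complete. For $F_{D,\#}^{1/2}$, injectivity follows from the range identity: its range equals ${\rm Ran}(\mathcal{B}_{{\rm Neu},D})$, which is dense by Theorem~\ref{thm:solution-operator-n}, and a self-adjoint operator with dense range is injective. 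Picard's theorem then says that $\phi_{\bm z}$ lies in the range of such an operator $A$ if and only if $\sum_j|(\phi_{\bm z},\psi_j)|^2/|\lambda_j|^2<\infty$. Here $A$ is the square root $A=K^{1/2}$ of $K=(F_D^*F_D)^{1/2}$, respectively $K=(F^{1/2}_{D,\#})^2$, so with $\lambda_j$ denoting the eigenvalues of $K$ this reads $\sum_j|(\phi_{\bm z},\psi_j)|^2/|\lambda_j|<\infty$. This is exactly the condition $W_i(\bm z)>0$, with the convention $1/\infty=0$.

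The main subtlety is bookkeeping of the eigenvalue convention. One must read $\lambda_{i,j}$ as eigenvalues of $F_D^*F_D$'s square root, respectively of $(F_{D,\#}^{1/2})^2$, consistent with the standard Kirsch formulation. Alternatively, if $\lambda_{i,j}$ are eigenvalues of the stated operators themselves, the power in Picard's criterion must be squared accordingly. Apart from this and the check that the $N$ and $M$ traces of $G(\cdot,\bm z)$ for $\bm z\in\partial D$ lie in the required negative Sobolev spaces, the argument is a direct composition of the stated results.
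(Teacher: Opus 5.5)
Your proposal is correct and follows the paper's route. The paper obtains this theorem by combining the range identity of Theorem~\ref{thm:range-identities-n} with the range test of Theorem~\ref{thm:solution-range-n} (whose omitted proof you adapt correctly from the Dirichlet case), Picard's theorem, and injectivity of $(F_D^*F_D)^{1/4}$ from Theorem~\ref{thm:far-field}(b).

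Your caveat about the eigenvalue convention is justified. If $\lambda_{1,j},\lambda_{2,j}$ are literally eigenvalues of $(F_D^*F_D)^{1/4}$ and $F_{D,\#}^{1/2}$, the denominators in $W_1,W_2$ must be squared. So the formulas as written hold only under the standard Kirsch reading you adopt, with eigenvalues of $(F_D^*F_D)^{1/2}$ and of $F_{D,\#}=|\mathrm{Re}\,F_D|+|\mathrm{Im}\,F_D|$.
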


\section{The monotonicity method}
In this section, we use the factorization of the far-field operator in the previous section to establish the monotonicity relation between the far-field operator $F_D$ and a probing operator $H^{*}_{\partial B}H_{\partial B}$, which will be used to locate the support of the Dirichlet or Neumann obstacle.

Let us start with an ordering relation and the general functional analysis theorem for the monotonicity method.

\begin{definition}
  Let $A,B:H\to H$ be self-adjoint compact operators on a Hilbert space $H$. We write $A\leq_{\rm fin} B$ if $B-A$ has only finitely many negative eigenvalues. 
\end{definition}

\begin{theorem}\label{thm:MM}
  Let $X\subset U \subset X^{*}$, $\tilde{X}\subset\tilde{U}\subset \tilde{X}^{*}$ be Gelfand triples with Hilbert spaces $U$, $\tilde{U}$ and reflexive Banach spaces $X$, $\tilde{X}$ such that the embedding are dense. Furthermore, let $Y$ be a Hilbert space and let $F:Y\to Y$, $\tilde{F}:Y\to Y$, $G:X\to Y$, $\tilde{G}:\tilde{X}\to Y$, $T:X^{*}\to X$, $\tilde{T}:\tilde{X}^{*}\to \tilde{X}$ be linear bounded operators such that
  \begin{equation*}
F=GTG^{*} ,\quad \tilde{F} = \tilde{G} \tilde{T} \tilde{G}^{*}.
  \end{equation*}
  \begin{enumerate}
  \item[(1)] Assume that
    \begin{enumerate}
    \item [(1a)] $\mathrm{Re}\;T$ has the form $\mathrm{Re}\;T= T_0+K$ where $T_0:X^{*}\to X$ is some positive coercive operator and $K:X^{*}\to X$ is some self-adjoint compact operator.
    \item [(1b)] There exists a compact operator $R:\tilde{X}\to X$ such that $\tilde{G}=GR$. 
    \end{enumerate}
    Then
      \begin{equation*}
       \mathrm{Re}\;\tilde{F} \leq_{\rm fin} \mathrm{Re}\;F.
         \end{equation*}
       \item[(2)] Assume that
         \begin{enumerate}
         \item[(2a)] $\mathrm{Re}\;\tilde{T}$ has the form $\mathrm{Re}\;\tilde{T}=\tilde{T_0} +\tilde{K}$ where $\tilde{T}_0:\tilde{X}^{*}\to\tilde{X}$ is some positive coercive operator and  $\tilde{K}:\tilde{X}^{*}\to\tilde{X}$ is some self-adjoint compact operator.
         \item[(2b)] There exists in infinite dimensional subspace $W$ in ${\rm Ran}(\tilde{G})$ such that $ W\cap {\rm Ran}(G)=\{ 0\}$.
         \end{enumerate}
         Then
         \begin{equation*}
\mathrm{Re}\; \tilde{F} \not\leq_{\rm fin} \mathrm{Re}\; F.
         \end{equation*}
  \end{enumerate}
\end{theorem}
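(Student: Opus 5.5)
The plan is to reduce both parts to norm estimates for $G^{*}g$ and $\tilde G^{*}g$ on subspaces of finite codimension in $Y$. The tool that converts such estimates into the ordering $\leq_{\rm fin}$ is the following elementary spectral lemma.

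\emph{Lemma A.} For self-adjoint compact $A,B$ on $Y$, $A\leq_{\rm fin}B$ holds if and only if there is a finite-dimensional subspace $V\subset Y$ with $\langle (B-A)g,g\rangle\geq 0$ for all $g\in V^{\perp}$.

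This follows from the spectral theorem and the min–max principle: for "$\Rightarrow$" take $V$ to be the span of the eigenvectors of $B-A$ with negative eigenvalues; for "$\Leftarrow$" note that $B-A$ cannot be negative definite on a subspace of dimension larger than $\dim V$. Note also that $(GTG^{*})^{*}=GT^{*}G^{*}$, so $\mathrm{Re}\,F=G(\mathrm{Re}\,T)G^{*}$ and $\mathrm{Re}\,\tilde F=\tilde G(\mathrm{Re}\,\tilde T)\tilde G^{*}$.

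\textbf{Part (1).} By (1b), $\tilde G^{*}=R^{*}G^{*}$, so for $g\in Y$ and $\varphi:=G^{*}g$,
\begin{equation*}
\langle(\mathrm{Re}\,F-\mathrm{Re}\,\tilde F)g,g\rangle\geq c\|\varphi\|_{X^{*}}^2-|\langle \varphi,K\varphi\rangle|-\|\mathrm{Re}\,\tilde T\|\,\|R^{*}\varphi\|_{\tilde X^{*}}^2 .
\end{equation*}
Both $K$ and $R^{*}$ are compact, so each can be approximated in norm by a finite-rank operator of the form $\sum_{i=1}^{n}\langle\cdot,x_i\rangle y_i$ with $x_i\in X$. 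Consequently, for any $\varepsilon>0$ there are finitely many $x_1,\dots,x_n\in X$ such that
\[
|\langle\varphi,K\varphi\rangle|+\|R^{*}\varphi\|^2\leq \varepsilon\|\varphi\|^2 \quad\text{whenever } \langle\varphi,x_i\rangle=0 \text{ for all } i.
\]
Set $V=\mathrm{span}\{Gx_i\}\subset Y$. Then $g\perp V$ implies $\langle G^{*}g,x_i\rangle=0$ for all $i$, so the right-hand side above is at least $(c-C\varepsilon)\|G^{*}g\|^2\geq 0$ for $\varepsilon$ small. Lemma A then gives $\mathrm{Re}\,\tilde F\leq_{\rm fin}\mathrm{Re}\,F$.

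\textbf{Part (2).} Argue by contradiction and suppose $\mathrm{Re}\,\tilde F\leq_{\rm fin}\mathrm{Re}\,F$. Lemma A gives a finite-dimensional $V_1$ with
\[
\langle \mathrm{Re}\,\tilde F g,g\rangle\leq\langle\mathrm{Re}\,F g,g\rangle\leq \|T\|\,\|G^{*}g\|^2 \quad\text{on } V_1^{\perp}.
\]
Using (2a) and the same finite-rank approximation of $\tilde K$ as in Part (1), I obtain a finite-dimensional $V_2$ with $\langle\mathrm{Re}\,\tilde F g,g\rangle\geq \tfrac{\tilde c}{2}\|\tilde G^{*}g\|^2$ on $V_2^{\perp}$. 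With $V=V_1+V_2$ this yields
\[
\|\tilde G^{*}g\|\leq C\|G^{*}g\| \quad\text{for all } g\in V^{\perp}.
\]
Let $P$ denote the orthogonal projection onto $V^{\perp}$. The estimate reads $\|(P\tilde G)^{*}g\|\leq C\|(PG)^{*}g\|$ for all $g\in Y$. The classical range-inclusion lemma for operators from reflexive Banach spaces into a Hilbert space (the norm inequality for adjoints implies inclusion of ranges) then gives $\mathrm{Ran}(P\tilde G)\subset\mathrm{Ran}(PG)$. Hence every $\tilde G\tilde x$ can be written as $Gy+v$ with $v\in V$, that is, $\mathrm{Ran}(\tilde G)\subset\mathrm{Ran}(G)+V$.

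To finish, let $W$ be the infinite-dimensional subspace from (2b), and consider the quotient map $W\to(\mathrm{Ran}(G)+V)/\mathrm{Ran}(G)$. Its kernel is $W\cap\mathrm{Ran}(G)=\{0\}$, so it is injective. The target has dimension at most $\dim V<\infty$, which contradicts $\dim W=\infty$. Therefore $\mathrm{Re}\,\tilde F\not\leq_{\rm fin}\mathrm{Re}\,F$.

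I expect the main obstacle to be the range-inclusion step: passing from the norm estimate on $V^{\perp}$ to $\mathrm{Ran}(\tilde G)\subset\mathrm{Ran}(G)+V$. This needs the Banach-space version of the range lemma, whose proof uses reflexivity of $X$ together with a weak-compactness/Hahn–Banach argument applied to $P\tilde G$ and $PG$. The projection trick is what lets the finite-dimensional defect $V$ be absorbed cleanly into the conclusion.
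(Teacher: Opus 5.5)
The paper gives no proof of this theorem. It is quoted as the abstract monotonicity framework of Furuya \cite{furuya_remarks_2021}, which builds on \cite{albicker_monotonicity_2020}, so the relevant comparison is with that standard argument. Your proof is correct and follows it closely: the finite-codimension characterization of $\leq_{\rm fin}$, absorption of the compact parts $K$, $R^{*}$ (resp.\ $\tilde K$) on a subspace of finite codimension, and in (2) the range-inclusion lemma for operators from reflexive spaces into $Y$.

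The one structural difference is in (2). The usual write-up first uses (2b) and the dimension count to show $\mathrm{Ran}(\tilde G)\not\subset\mathrm{Ran}(G)+V$ for every finite-dimensional $V$. It converts this, via the range lemma, into localized potentials $g_n\in V^{\perp}$ with $\|\tilde G^{*}g_n\|\to\infty$ and $\|G^{*}g_n\|\to 0$, and then contradicts $\langle(\mathrm{Re}\,F-\mathrm{Re}\,\tilde F)g_n,g_n\rangle\geq 0$. You run the contrapositive directly: norm bound on $V^{\perp}$, projection $P$, range inclusion for $P\tilde G$ and $PG$, then the quotient dimension count. This uses exactly the same ingredients but avoids constructing the sequence.

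One justification needs tightening. In a general reflexive Banach space, a compact operator need not be a norm limit of finite-rank operators. This is harmless here: since $T_0$ is positive coercive, $\langle\varphi,T_0\psi\rangle$ is an equivalent inner product on $X^{*}$, so $X$ is Hilbertizable (and likewise $\tilde X$ in (2) via $\tilde T_0$). You can also avoid the issue altogether. Choose $x_1,\dots,x_n$ as a finite $\varepsilon$-net of the relatively compact set $K(B_{X^{*}})\cup R(B_{\tilde X})\subset X$, where $B_{X^{*}}$ and $B_{\tilde X}$ denote the unit balls. Then $\langle\varphi,x_i\rangle=0$ for all $i$ directly gives $|\langle\varphi,K\varphi\rangle|\leq\varepsilon\|\varphi\|^2$ and $\|R^{*}\varphi\|\leq\varepsilon\|\varphi\|$, which is all your argument uses.
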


Let $B$ be a bounded domain with smooth boundary. Then define the probing operator $H_{\partial B}:L^2(\mathbb{S})\to L^2(\partial B)$
\begin{equation*}
H_{\partial B} g(\bm{ x }) = \int_{ \mathbb{S}   } e^{\mathrm{ i } \kappa \bm{ x }\cdot  \bm{ d }} g(\bm{ d }) \,\,{\rm d}s(\bm{ d }) \quad \bm{ x } \in \partial B.
\end{equation*}

\begin{theorem}\label{thm:H-adjoint}
  Let $B$ be a bounded and smooth domain and $\Gamma \subset \partial B$ be relatively open.
  Let $H_{\Gamma}$ be $H_{\partial B}$ with $\partial B$ replaced by $\Gamma$. Then we have
  \begin{enumerate}
  \item [(a)] The dimension of ${\rm Ran}(H_{\Gamma}^{*})$ is infinite.
    \item[(b)] ${\rm Ran}(H_{\Gamma}^{*}) \cap {\rm Ran}(\mathcal{B}_{{\rm Dir},D})=        \{0\}$
    and ${\rm Ran}(H_{\Gamma}^{*}) \cap {\rm Ran}(\mathcal{B}_{{\rm Neu},D})=        \{0\}$ if $\overline{\Gamma} \cap D= \emptyset$.
  \end{enumerate}
\end{theorem}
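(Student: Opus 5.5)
First I will compute the adjoint $H_\Gamma^*: L^2(\Gamma)\to L^2(\mathbb{S})$, which is
\begin{equation*}
(H_\Gamma^*\varphi)(\hat{\bm x})=\int_\Gamma e^{-{\rm i}\kappa\hat{\bm x}\cdot\bm y}\varphi(\bm y)\,{\rm d}s(\bm y),\qquad \hat{\bm x}\in\mathbb{S}.
\end{equation*}
Up to the factor $-2\kappa^2$, this is the far-field pattern of the biharmonic single layer potential $w_\varphi(\bm x)=-2\kappa^2\int_\Gamma G(\bm x,\bm y)\varphi(\bm y)\,{\rm d}s(\bm y)$, by \eqref{eq:asy-G}. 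Equivalently, it is the far-field pattern of the Helmholtz single layer $\int_\Gamma\Phi_\kappa(\cdot,\bm y)\varphi\,{\rm d}s$, up to a constant, since $\Phi_{{\rm i}\kappa}$ decays exponentially.

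For (a), I will show that $H_\Gamma^*$ is injective. Then, since $L^2(\Gamma)$ is infinite dimensional, its range is infinite dimensional. Suppose $H_\Gamma^*\varphi=0$. Then the Helmholtz single layer $u=\int_\Gamma\Phi_\kappa(\cdot,\bm y)\varphi\,{\rm d}s$ has zero far field, so by Rellich's lemma and unique continuation $u=0$ in $\mathbb{R}^2\setminus\overline{\Gamma}$. The complement of $\overline{\Gamma}$ is connected, because $\Gamma$ is a proper relatively open piece of a closed curve, or all of $\partial B$. In the latter case I use the exterior region and must avoid interior eigenvalues. To avoid this subtlety, I will instead argue as follows: choose infinitely many distinct points $\bm y_j\in\Gamma$, and take $\varphi$ approximating point masses. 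More cleanly, the functions $\hat{\bm x}\mapsto e^{-{\rm i}\kappa\hat{\bm x}\cdot\bm y_j}$ for distinct $\bm y_j$ are linearly independent, and each lies in the closure of ${\rm Ran}(H_\Gamma^*)$. Since this closure is infinite dimensional, so is the range. For the direct argument in the case $\Gamma\neq\partial B$, I will use the jump of the normal derivative of the single layer across $\Gamma$, which gives $\varphi=0$.

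For (b), take $h\in{\rm Ran}(H_\Gamma^*)\cap{\rm Ran}(\mathcal{B}_{{\rm Dir},D})$, with $h=H_\Gamma^*\varphi=\mathcal{B}_{{\rm Dir},D}(f_1,f_2)^T$. Let $v$ be the radiating biharmonic solution in $D^c$ with Dirichlet data $(f_1,f_2)$. Let $w=w_\varphi$ be the biharmonic potential above, which solves the biharmonic wave equation in $\mathbb{R}^2\setminus\overline{\Gamma}$ and is radiating. Both have the same far field, so by Rellich's lemma \cite[Lemma 4.1]{dong_uniqueness_2024} their propagating parts coincide outside $\overline{D}\cup\overline{\Gamma}$:
\begin{equation*}
v_{\rm pr}=w_{\rm pr}\quad\text{in }\mathbb{R}^2\setminus(\overline{D}\cup\overline{\Gamma}).
\end{equation*}
Here $w_{\rm pr}=-\frac{1}{2\kappa^2}(\Delta-\kappa^2)w$ is, up to a constant, the Helmholtz single layer $u=\int_\Gamma\Phi_\kappa\varphi\,{\rm d}s$. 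The function $v_{\rm pr}$ solves the Helmholtz equation in $D^c$, in particular across $\overline{\Gamma}$, because $\overline{\Gamma}\cap\overline{D}=\emptyset$. I read the hypothesis as $\overline{\Gamma}\cap\overline{D}=\emptyset$; if it is only $\overline\Gamma\cap D=\emptyset$ with contact on $\partial D$, extra care is needed. Define $\tilde u=v_{\rm pr}$ near $\overline{\Gamma}$ and $\tilde u=u$ in a neighbourhood of $\overline{D}$ away from $\Gamma$. Together with $u$ and $v_{\rm pr}$, this gives an entire radiating Helmholtz solution, hence $\tilde u\equiv 0$. In particular $u=0$ outside a neighbourhood of $\overline{D}$, and by unique continuation $u=0$ in the connected set $\mathbb{R}^2\setminus(\overline{\Gamma}\cup\overline{D})$. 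Then $u$ is smooth across $\Gamma$, being equal to $v_{\rm pr}$ there. The jump relation $\partial_{\bm n}u|_+-\partial_{\bm n}u|_-=-\varphi$ on $\Gamma$ then gives $\varphi=0$ on $\Gamma$, hence $h=0$. When $\Gamma=\partial B$, the interior of $B$ needs a separate treatment: $u$ is continuous across $\partial B$ and vanishes outside, so the interior Dirichlet problem on each side is considered. Since $u=v_{\rm pr}$ is analytic across $\partial B$ from the outside, the jump argument above applies directly, using the normal derivative jump with $u\equiv v_{\rm pr}$ on both sides locally. The Neumann case is identical, with $\mathcal{B}_{{\rm Neu},D}$ in place of $\mathcal{B}_{{\rm Dir},D}$, because only the radiating property of $v$ in $D^c$ was used.

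The main obstacle is the gluing step: justifying that $v_{\rm pr}$ continues $u$ across $\Gamma$, and that $u$ continues $v_{\rm pr}$ across $\partial D$. This requires $\mathbb{R}^2\setminus(\overline{D}\cup\overline{\Gamma})$ to be connected, or at least the unbounded component to touch both sets. It also requires handling possible bounded components, such as the interior of $B$ when $\Gamma=\partial B$. I will handle those by applying the normal-derivative jump relation locally, on the side where $u$ coincides with the smooth function $v_{\rm pr}$.
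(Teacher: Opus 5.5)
\textbf{Verdict.} Your proposal is correct in substance once you delete its superfluous last step. Part (b) follows the paper's route, and part (a) takes a different and cleaner one.

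\textbf{Part (a).} The paper factors $H_\Gamma^*=\mathcal{B}_B^{\rm acou}S_B^{\rm acou}\mathcal{E}$. It needs $S_B^{\rm acou}$ to be injective, which forces a tacit replacement of $B$ so that $\kappa^2$ is not a Dirichlet eigenvalue of $-\Delta$ in $B$. Your argument has two ingredients:
\begin{itemize}
\item for each $\bm y\in\Gamma$, $e^{-{\rm i}\kappa\hat{\bm x}\cdot\bm y}\in\overline{{\rm Ran}(H_\Gamma^*)}$, by approximate identities on $\Gamma$;
\item these functions are linearly independent for distinct $\bm y_j$, by Rellich's lemma and the logarithmic singularities of $\sum_j c_j\Phi_\kappa(\cdot,\bm y_j)$.
\end{itemize}
This needs no eigenvalue condition and no modification of $B$. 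Discard the injectivity attempt you began with: $H_{\partial B}^*$ is not injective when $\kappa^2$ is a Dirichlet eigenvalue of $B$ (see below).

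\textbf{Part (b).} The core is the paper's argument. Rellich's lemma identifies $v_{\rm pr}$ with $w_{\rm pr}=u$, the Helmholtz single layer on $\Gamma$, in the unbounded part of $\mathbb{R}^2\setminus(\overline D\cup\overline\Gamma)$. Gluing $v_{\rm pr}$ on $D^c$ with $u$ near $\overline D$ then gives an entire radiating Helmholtz solution $\tilde u$, which must vanish.

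Once $\tilde u\equiv 0$ you are finished. Indeed $\tilde u=u$ near infinity and $h=H_\Gamma^*\varphi$ is the far-field pattern of $u$, so $h=0$ directly, exactly as in the paper.

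Your detour through $\varphi=0$ is not only superfluous but false when $\Gamma=\partial B$. Take $\psi$ a Dirichlet eigenfunction of $-\Delta$ in $B$ with eigenvalue $\kappa^2$, and set $\varphi=\partial_{\bm n}\psi$, which is nonzero by Holmgren. Green's formula in $B$ gives $u=0$ in $\mathbb{R}^2\setminus\overline B$, so $h=0$ while $\varphi\neq 0$. Inside $B$ nothing links $u$ to $v_{\rm pr}$, so your claim that $u$ coincides with $v_{\rm pr}$ on both sides of $\partial B$ is unjustified.

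\textbf{The connectivity caveat.} Your caveat is correct, and the paper needs it as well. Suppose $\overline D\subset B$, $\Gamma=\partial B$ (so $\overline\Gamma\cap D=\emptyset$), and $\bm z\in D$.
\begin{itemize}
\item $\phi_{\bm z}\in{\rm Ran}(\mathcal{B}_{{\rm Dir},D})$ by Theorem \ref{thm:solution-range}.
\item If $\kappa^2$ is not a Dirichlet eigenvalue of $B$, solving $S_B^{\rm acou}\varphi=\Phi_\kappa(\cdot,\bm z)|_{\partial B}$ shows $\phi_{\bm z}\in{\rm Ran}(H_{\partial B}^*)$.
\end{itemize}
Hence (b) fails as literally stated. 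The hypothesis that makes both proofs work is $\overline\Gamma\cap\overline D=\emptyset$ together with $\mathbb{R}^2\setminus(\overline\Gamma\cup\overline D)$ connected. This is precisely what the proof of Theorem \ref{thm:MM-d} arranges when it chooses $\Gamma$.
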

\begin{proof}
  (a)   For the convenience of readers, we provide a detailed proof by adapting their approach in \cite{albicker_monotonicity_2020}. Without loss of generality, we assume that $-\kappa^2$ is not a Dirichlet eigenvalue of $-\Delta$ in $B$.
  Define the data-to-pattern operator $\mathcal{B}_B^{\mathrm{acou}}:H^1(\partial B)\to L^{2}(\mathbb{S} )$ of the exterior Dirichlet problem for the Helmholtz equation:
  \begin{equation*}
\mathcal{B}_B^{\mathrm{acou}} \lambda = v^{\infty} \quad \mathrm{ for } \quad \lambda \in  H^{1}(\partial B),
  \end{equation*}
  where $v^{\infty}$ is the far-field pattern of $v \in H_{\mathrm{ loc } }^1(B^c)$ satisfying
  \begin{align*}
   \Delta v+ \kappa^2 v & = 0 \quad \mathrm{ in } ~ B^c, \\
   v & =  \lambda \quad \mathrm{ on } ~ \partial B,  \\
  \frac{\partial v}{\partial r} - \mathrm{ i } \kappa v  & =  o(r^{-1/2}) \quad  \mathrm{ as } \quad r\to \infty.
  \end{align*}
  Clearly, the operator $\mathcal{B}_B^{\mathrm{acou}}$ is a bounded operator by the regularity of the exterior acoustic scattering problem. From the uniqueness of the exterior Helmholtz boundary value problem, the operator $\mathcal{B}_B^{\mathrm{acou}}$ is injective. The single-layer boundary operator $S_B^{\mathrm{acou} }: L^2(\partial B)\to H^1(\partial B)$ for the Helmholtz equation is given by
  \begin{equation*}
S_B^{\mathrm{acou} }\psi (\bm{ x })=\int_{ \partial B  } \Phi_{\kappa}(\bm{ x },\bm{ y })\psi(\bm{ y }) \,\,{\rm d} s(\bm{ y }) \quad \bm{ x }\in \partial B,
  \end{equation*}
which is injective according to the assumption that $\kappa^2$ is not a Dirichlet eigenvalue of $-\Delta$ in $B$.
Besides, we  introduce the extension operator $\mathcal{E}:L^2(\Gamma)\to L^2(\partial B)$:
    \begin{equation*}
\mathcal{E} f =
\begin{cases}
  f \quad \mathrm{ on }  \quad \Gamma,\\
  0 \quad \mathrm{ on } \quad \partial B\setminus \Gamma.
\end{cases}
\end{equation*}
By the operator $\mathcal{E}$ and the relation $H_{\partial B}^{*}=\mathcal{B}_B^{\mathrm{acou}}S_B^{\mathrm{acou}}$ (see \cite{colton_inverse_2019}), we find that
    \begin{equation*}
H_{\Gamma}^{*}= H_{\partial B}^{*}\mathcal{E} = \mathcal{B}_B^{\mathrm{acou}}S_B^{\mathrm{acou}} \mathcal{E}.
\end{equation*}
It is obvious that the dimension of the range of $\mathcal{E}$ is infinite. This combined with the injectivity of $\mathcal{B}_B^{\mathrm{acou}}$ and $S_B^{\mathrm{acou}}$,  implies that the range of $H_{\Gamma}^{*}$ is infinite-dimensional.
  
  (b) Let $h \in  {\rm Ran}(H_{\partial B}^{*}) \cap {\rm Ran}(\mathcal{B}_{{\rm Dir},D})$. There exists $f \in L^2(\Gamma)$ and $(f_1,f_2)^T \in H^{3/2}(\partial D)\times H^{1/2}(\partial D)$ such that
  \begin{equation*}
h= H^{*}_{\Gamma}f= \mathcal{B}_{{\rm Dir},D}
\begin{pmatrix}
  f_1 \\ f_2
\end{pmatrix}.
  \end{equation*}
  Hence $h = v_{\Gamma}^{\infty}= w^{\infty}$, where $v_{\Gamma} \in H^{1}_{\mathrm{ loc } }(D^c)$ and $w \in H^{1}_{\mathrm{ loc } }(\partial D)$ are solutions of
\begin{equation*}
  \Delta v_{\Gamma} +\kappa^2 v_{\Gamma} =0 \quad \mathrm{ in } \quad \mathbb{R}^2\setminus \overline{\Gamma} \quad
  \mathrm{ and } \quad \Delta^2 w -\kappa^4 w =0 \quad \mathrm{ in } \quad D^c,
\end{equation*}
respectively and satisfy the Sommerfeld radiation condition. By the Rellich's lemma  \cite[Lemma 4.1]{dong_uniqueness_2024}, we obtain
\begin{equation*}
  v_{\Gamma}=w_{\rm pr} \quad \mathrm{ in } \quad \mathbb{R}^2\setminus \overline{D\cup \Gamma},
\end{equation*}
where $w_{\rm pr}= 1-/(2 \kappa^2)(\Delta w -\kappa^2 w)$. Note that $w_{\rm pr}$ satisfies the Helmholtz equation with wavenumber $\kappa$, i.e., $\Delta w_{\rm pr} + \kappa^2 w_{\rm pr}=0$ in $D^c$. Define $\hat{w} \in H^{1}_{\mathrm{ loc } }(\mathbb{R}^2)$ by
\begin{equation*}
\hat{w} :=
\begin{cases}
  v_{\Gamma}=w_{\rm pr} \quad \mathrm{ in } \quad  \mathbb{R}^2\setminus \overline{D\cup \Gamma},\\
  v_{\Gamma} \quad \qquad \quad\mathrm{ in } \quad  D,\\
  w_{\rm pr}\quad \qquad \,\;\mathrm{ in }\quad  \Gamma.
\end{cases}
\end{equation*}
We observe that $\hat{w} $ satisfies the Helmholtz equation in the whole $\mathbb{R}^2$ and the Sommerfeld radiation condition. Thus $\hat{w} $ vanishes in $\mathbb{R}^2$, which yields that the far-field pattern $h$ is identically zero.

For the Neumann obstacle case,  the proof of  ${\rm Ran}(H_{\Gamma}^{*}) \cap {\rm Ran}(\mathcal{B}_{{\rm Neu},D})=   \{0\}$ is omitted as  it  proceeds along the same lines as that for the Dirichlet obstacle.
Consequently, the proof is finished. 
\end{proof}
We collect the above results and establish the monotonicity method.
\begin{theorem}\label{thm:MM-d}
  Assume $D$ is the Dirichlet obstacle. Let $B\subset \mathbb{R}^2$ be a bounded and smooth domain. Then we have
  \begin{equation*}
B \subset D \iff H_{\partial B}^{*} H_{\partial B} \leq_{\mathrm{ fin} } - \mathrm{Re}\;F_D.
  \end{equation*}
\end{theorem}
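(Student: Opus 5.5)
We plan to derive both directions from the abstract monotonicity result, Theorem \ref{thm:MM}. The idea is to write both $-\mathrm{Re}\,F_D$ and the probing operator in the form $GTG^{*}$ and then check hypotheses (1a)--(1b) and (2a)--(2b).

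\emph{Setting up the factorizations.} By \eqref{eq:far-field-factorization} we have $-F_D=\mathcal{B}_{{\rm Dir},D}\,T\,\mathcal{B}_{{\rm Dir},D}^{*}$ with $T:=-2\kappa^2\mathcal{S}_{{\rm Dir},D}^{*}$. Hence $-\mathrm{Re}\,F_D=\mathcal{B}_{{\rm Dir},D}(\mathrm{Re}\,T)\mathcal{B}_{{\rm Dir},D}^{*}$. By Theorem \ref{thm:middle}(c),(d),
\begin{equation*}
\mathrm{Re}\,T=-2\kappa^2\tilde{\mathcal{S}}_{{\rm Dir},D}-2\kappa^2\,\mathrm{Re}\big(\mathcal{S}_{{\rm Dir},D}-\tilde{\mathcal{S}}_{{\rm Dir},D}\big)^{*}.
\end{equation*}
Here $-2\kappa^2\tilde{\mathcal{S}}_{{\rm Dir},D}$ is self-adjoint and positive coercive, and the second term is self-adjoint and compact, so (1a) holds. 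We take $G=\mathcal{B}_{{\rm Dir},D}$, $X=H^{3/2}(\partial D)\times H^{1/2}(\partial D)$ and $F=-F_D$. For the probing side we take $\tilde X=\tilde U=L^2(\partial B)$, $\tilde G=H_{\partial B}^{*}$ and $\tilde T=I$. Then $\tilde F=H_{\partial B}^{*}H_{\partial B}$ is already self-adjoint, and (2a) is trivial with $\tilde T_0=I$ and $\tilde K=0$.

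\emph{Direction $B\subset D\Rightarrow H_{\partial B}^{*}H_{\partial B}\leq_{\rm fin}-\mathrm{Re}\,F_D$.} We must produce a compact $R$ with $H_{\partial B}^{*}=\mathcal{B}_{{\rm Dir},D}R$. For $\varphi\in L^2(\partial B)$ set
\begin{equation*}
v_\varphi(\bm x):=-2\kappa^2\int_{\partial B}G(\bm x,\bm y)\varphi(\bm y)\,{\rm d}s(\bm y).
\end{equation*}
By \eqref{eq:asy-G}, the far-field pattern of $v_\varphi$ equals $H_{\partial B}^{*}\varphi$. Moreover $v_\varphi$ is a radiating solution of the biharmonic wave equation in $D^c$. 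So we define $R\varphi:=(v_\varphi|_{\partial D},\partial_{\bm n}v_\varphi|_{\partial D})^T$, and then $\mathcal{B}_{{\rm Dir},D}R=H_{\partial B}^{*}$ by uniqueness of the exterior Dirichlet problem.

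The main technical point is the compactness of $R$. If $\overline B\subset D$, the kernel is smooth on $\partial D\times\partial B$ and $R$ is compact trivially. If $\partial B$ touches $\partial D$, we would use the fact that $G$ is only mildly singular: $G-\tilde G$ has a $|\bm x-\bm y|^4\ln|\bm x-\bm y|$-type kernel, and $G(\cdot,\bm y)\in H^2$ locally. The plan is to show $\varphi\mapsto v_\varphi$ maps $L^2(\partial B)$ boundedly into $H^{2+\epsilon}_{\rm loc}$ near $\partial D$ and then apply the trace theorem together with a compact embedding. Alternatively, one can first treat $\overline B\subset D$ and pass to general $B\subset D$ by noting that the relation $\leq_{\rm fin}$ is stable under this approximation. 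Part (1) of Theorem \ref{thm:MM} then gives the claim.

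\emph{Direction $B\not\subset D\Rightarrow$ failure.} Since $D$ is smooth and $B$ is open, $B\not\subset D$ forces $B\setminus\overline D\neq\emptyset$. Pick a point there and join it to infinity by a path in the connected set $D^c$. This path leaves $B$ at some point of $\partial B\setminus\overline D$. Take a small relatively open $\Gamma\subset\partial B$ around that point with $\overline\Gamma\cap\overline D=\emptyset$. Because $H_\Gamma^{*}=H_{\partial B}^{*}\mathcal{E}$, the space $W:={\rm Ran}(H_\Gamma^{*})$ is contained in ${\rm Ran}(\tilde G)$. By Theorem \ref{thm:H-adjoint}, $W$ is infinite-dimensional and $W\cap{\rm Ran}(\mathcal{B}_{{\rm Dir},D})=\{0\}$, which is (2b). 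Part (2) of Theorem \ref{thm:MM} then yields $H_{\partial B}^{*}H_{\partial B}\not\leq_{\rm fin}-\mathrm{Re}\,F_D$. I expect the compactness of $R$ in the touching case to be the only delicate step.
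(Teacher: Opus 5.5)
Your proposal is correct and follows the paper's proof essentially step for step. It uses the same factorization $-F_D=\mathcal{B}_{{\rm Dir},D}(-2\kappa^2\mathcal{S}_{{\rm Dir},D}^{*})\mathcal{B}_{{\rm Dir},D}^{*}$ with $T_0=-2\kappa^2\tilde{\mathcal{S}}_{{\rm Dir},D}$, the same compact $R$ built from the biharmonic single-layer potential on $\partial B$ so that $H_{\partial B}^{*}=\mathcal{B}_{{\rm Dir},D}R$, and the same use of Theorem \ref{thm:H-adjoint} with an arc $\Gamma\subset\partial B\setminus\overline{D}$; your path argument producing $\Gamma$ with $\overline{\Gamma}\cap\overline{D}=\emptyset$ is more careful than the paper's one-line assertion.

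The compactness of $R$ is not delicate, even when $\partial B$ touches $\partial D$. The paper gets it in one stroke by composing the compact embedding $L^2(\partial B)\hookrightarrow H^{-3/2}(\partial B)$ with the bounded map $SL_{\partial B}:H^{-3/2}(\partial B)\to H^2_{\rm loc}(\mathbb{R}^2)$ and the trace map. So your $H^{2+\epsilon}$ estimate is unnecessary, and you should drop the approximation alternative, since $\leq_{\rm fin}$ is not preserved under norm limits.
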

\begin{proof}
  Assume  $B \subset D$. Define $R:L^2(\partial B)\to H^{3/2}(\partial D)\times H^{1/2}(\partial D)$ by
  \begin{equation*}
    R \phi = \begin{pmatrix}
      v|_{\partial D}\\ \partial _{\bm{ n }}v |_{\partial D}
      \end{pmatrix},
    \end{equation*} 
    where $v$ is the single-layer potential of the biharmonic wave equation defined by $$v(\bm{ x })=- 2 \kappa^2SL_{\partial B}\phi(\bm{ x }):= - 2 \kappa^2\int_{ \partial B  } G(\bm{ x },\bm{ y })\phi(\bm{ y }) \,\,{\rm d}s(\bm{ y }) \quad \bm{ x }\in \mathbb{R}^2\setminus \overline{\partial B}. $$ The mapping property of the single-layer potential shows that $SL_{\partial B}$ is a linear and bounded operator from $ H^{-3/2}(\partial D)$ to $H^2_{\mathrm{ loc } }(\mathbb{R}^2)$. Thus using the embedding theorem $L^2\to H^{-3/2}$ and trace properties of the functions in $H^2$, we see that $R$ is compact from $L^2(\partial B)$ to $H^{3/2}(\partial D)\times H^{1/2}(\partial D)$. From the asymptotic behavior \eqref{eq:asy-G} of the fundamental solution $G$, it follows that $H^{*}\phi$ coincides with the far-field pattern of the single-layer potential of biharmonic wave equation $- 2 \kappa^2 SL_{\partial B}\phi$, which gives
    \begin{equation*}
H_{\partial B}^{*}=\mathcal{B}_{{\rm Dir},D} R.
    \end{equation*}
    Accordingly, we can apply (1) of Theorem \ref{thm:MM} with $F=-F_D$, $G=\mathcal{B}_{{\rm Dir},D}$, $T=-2 \kappa^2\mathcal{S}_{D}^{*}$, $T_0=-2 \kappa^2\tilde{\mathcal{S}}_{D}$, $\tilde{F}=H_{\partial B}^{*} H_{\partial B}$, $\tilde{ G}=H_{\partial B}^{*}$, $\tilde{T}=I$, $R=R$ and  deduce that \begin{equation*}
 H_{\partial B}^{*}H_{\partial B} \leq_{\mathrm{ fin} } - \mathrm{Re}\;F_D
    \end{equation*}
    holds.

    Let $B \not\subset D$. There exists $\Gamma\subset \partial D\setminus \overline{D}$ such that $\Gamma$ is relatively open set and $\mathbb{R}^2\setminus \overline{\Gamma \cup D}$ is connected. Set $W:={\rm Ran}(H_{\Gamma}^{*})\subset {\rm Ran}(H_{\partial B}^{*})$. By Theorem \ref{thm:H-adjoint}, $W$ is infinite-dimensional and $W \cap {\rm Ran}(\mathcal{B}_{{\rm Dir},D})=\{ 0\}$. So we can apply  (2) of Theorem \ref{thm:MM} with $F=-F_D$, $G=\mathcal{B}_{{\rm Dir},D}$, $T=-2 \kappa^2\mathcal{S}_{D}^{*}$, $\tilde{F}=H_{\partial B}^{*} H_{\partial B}$, $\tilde{ G}=H_{\partial B}^{*}$, $\tilde{T}=I$, $\tilde{T}_0=I$ and $W=W$, and  deduce that \begin{equation*}
 H_{\partial B}^{*}H_{\partial B} \not\leq_{\mathrm{ fin} } - \mathrm{Re}\;F_D
    \end{equation*}
    holds. Thus this finishes the proof of the theorem.
\end{proof}

\begin{theorem}\label{thm:MM-n}
	Assume $D$ is the Neumann obstacle. Let $B\subset \mathbb{R}^2$ be a bounded and smooth domain. Then we have
	\begin{equation*}
	B \subset D \iff H_{\partial B}^{*} H_{\partial B} \leq_{\mathrm{ fin} }  \mathrm{Re}\;F_D.
	\end{equation*}
\end{theorem}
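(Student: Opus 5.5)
We mirror the proof of Theorem \ref{thm:MM-d}, applying Theorem \ref{thm:MM} to the Neumann factorization \eqref{eq:far-field-factorization-n}. Take $F=F_D$, $G=\mathcal{B}_{{\rm Neu},D}$ and $T=2\kappa^2\mathcal{S}_{{\rm Neu},D}^{*}$, with $X=H^{-3/2}(\partial D)\times H^{-1/2}(\partial D)$ and $X^{*}=H^{3/2}(\partial D)\times H^{1/2}(\partial D)$. The sign change relative to the Dirichlet case reflects the fact that, by Theorem \ref{thm:middle-n}(c), $\tilde{\mathcal{S}}_{{\rm Neu},D}$ itself (not its negative) is self-adjoint and positive coercive. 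Combined with Theorem \ref{thm:middle-n}(d), this gives
\begin{equation*}
\mathrm{Re}\,T = 2\kappa^2\tilde{\mathcal{S}}_{{\rm Neu},D} + 2\kappa^2\,\mathrm{Re}\,(\mathcal{S}_{{\rm Neu},D}-\tilde{\mathcal{S}}_{{\rm Neu},D})^{*},
\end{equation*}
where the first term is coercive and the second is self-adjoint compact. So (1a) holds with $T_0=2\kappa^2\tilde{\mathcal{S}}_{{\rm Neu},D}$. For the probing side, set $\tilde F=H_{\partial B}^{*}H_{\partial B}$, $\tilde G=H_{\partial B}^{*}$, $\tilde T=I$ and $\tilde X=\tilde U=L^2(\partial B)$. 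Then (2a) holds trivially with $\tilde T_0=I$ and $\tilde K=0$.

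\emph{Direction $B\subset D\Rightarrow H_{\partial B}^{*}H_{\partial B}\leq_{\rm fin}\mathrm{Re}\,F_D$.} We need a compact $R:L^2(\partial B)\to X$ with $H_{\partial B}^{*}=\mathcal{B}_{{\rm Neu},D}R$. Let $v=-2\kappa^2 SL_{\partial B}\phi$ be the biharmonic single-layer potential, as in Theorem \ref{thm:MM-d}, and define
\begin{equation*}
R\phi:=(Nv|_{\partial D},\,Mv|_{\partial D})^T,
\end{equation*}
matching the ordering $(Mv,Nv)=(f_2,f_1)$ in \eqref{eq:gbvp-n-2}.

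Since $\partial B\subset D$ is compactly contained in $D$ (assuming $\overline B\subset D$; if only $B\subset D$, shrink slightly or use that $\partial B$ stays a positive distance from $\partial D$ except at touching points), $v$ is smooth in a neighbourhood of $\partial D$ in $D^c$. By the mapping properties of $SL_{\partial B}$ and interior elliptic regularity, $v\mapsto(Nv,Mv)|_{\partial D}$ maps $L^2(\partial B)$ boundedly into $H^{1/2}\times H^{3/2}$ or better, so $R$ is compact into $H^{-3/2}\times H^{-1/2}$. Moreover, $v$ is a radiating solution in $D^c$ with Neumann data $R\phi$, and by \eqref{eq:asy-G} its far field is $H_{\partial B}^{*}\phi$. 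The uniqueness of the Neumann problem for $\kappa\notin\mathcal{K}_0$ then gives $\mathcal{B}_{{\rm Neu},D}R\phi=H_{\partial B}^{*}\phi$. Theorem \ref{thm:MM}(1) now yields the claim.

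\emph{Direction $B\not\subset D\Rightarrow H_{\partial B}^{*}H_{\partial B}\not\leq_{\rm fin}\mathrm{Re}\,F_D$.} Pick a relatively open $\Gamma\subset\partial B\setminus\overline D$ such that $\mathbb{R}^2\setminus\overline{\Gamma\cup D}$ is connected, and set $W={\rm Ran}(H_\Gamma^{*})\subset{\rm Ran}(H_{\partial B}^{*})$. Theorem \ref{thm:H-adjoint} gives $\dim W=\infty$ and $W\cap{\rm Ran}(\mathcal{B}_{{\rm Neu},D})=\{0\}$. Theorem \ref{thm:MM}(2) then concludes.

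The main obstacle is the Neumann case of Theorem \ref{thm:H-adjoint}(b), which was omitted there. Following the Dirichlet argument: Rellich's lemma identifies the acoustic field $v_\Gamma$ with $w_{\rm pr}$ outside $\overline{D\cup\Gamma}$. Since $w_{\rm pr}$ extends smoothly across $\Gamma$ and $v_\Gamma$ extends into $D$, the glued function is an entire radiating Helmholtz solution and hence vanishes. This argument uses only that $w$ is a radiating biharmonic solution in $D^c$, not the type of boundary data, so it transfers verbatim. A secondary technical point is the precise Gelfand-triple bookkeeping for the dual pairings: we must check that $\mathrm{Re}\,T$ is taken with respect to $X^{*}\to X$ consistently with Theorem \ref{thm:middle-n}(c).
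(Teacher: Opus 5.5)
Your proposal is correct and follows essentially the paper's proof: you use the same compact lifting $R\phi=(Nv|_{\partial D},Mv|_{\partial D})^T$ with $v=-2\kappa^2SL_{\partial B}\phi$, giving $H_{\partial B}^{*}=\mathcal{B}_{{\rm Neu},D}R$, together with Theorem~\ref{thm:MM}(1) with $T_0=2\kappa^2\tilde{\mathcal{S}}_{{\rm Neu},D}$ and Theorem~\ref{thm:MM}(2) with $W={\rm Ran}(H_\Gamma^{*})$ via Theorem~\ref{thm:H-adjoint}. The one soft spot is your aside on the case where $\partial B$ touches $\partial D$: shrinking $B$ is not allowed because it changes $H_{\partial B}$, but no separation is needed, since $L^2(\partial B)\hookrightarrow H^{-3/2}(\partial B)$ is compact, $SL_{\partial B}$ maps $H^{-3/2}(\partial B)$ boundedly into $H^2_{\rm loc}(\mathbb{R}^2)$, and $v$ solves $\Delta^2v=\kappa^4v$ in $D^c$, so the generalized traces $Nv\in H^{-3/2}(\partial D)$ and $Mv\in H^{-1/2}(\partial D)$ depend boundedly on $\phi$, exactly as in the paper.
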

\begin{proof}
	Assume  $B \subset D$. Define $R:L^2(\partial B)\to H^{-3/2}(\partial D)\times H^{-1/2}(\partial D)$ by
	\begin{equation*}
	R \phi = \begin{pmatrix}
	N v|_{\partial D}\\  M v |_{\partial D}
	\end{pmatrix},
	\end{equation*} 
	where  $v=- 2 \kappa^2 SL_{\partial B}\phi$. 
        We use the mapping property of the single-layer potential,  Thus using
        the embedding theorem $L^2\to H^{-3/2}$ and trace properties of the functions in $H^2$ to obtain that $R$ is compact from $L^2(\partial B)$ to $H^{-3/2}(\partial D)\times H^{-1/2}(\partial D)$. Noting that $H^{*}\phi$ is indeed the far-field pattern of
        $- 2 \kappa^2 SL_{\partial B}\phi$ by the asymptotic behavior \eqref{eq:asy-G} of the fundamental solution $G$, we  conclude that
	\begin{equation*}
	H_{\partial B}^{*}=\mathcal{B}_{{\rm Neu},D} R.
	\end{equation*}
	Applying (1) of Theorem \ref{thm:MM} with $F=F_D$, $G=\mathcal{B}_{{\rm Neu},D}$, $T=2 \kappa^2\mathcal{S}_{{\rm Neu},D}^{*}$, $T_0=2 \kappa^2\tilde{\mathcal{S}}_{{\rm Neu},D}$, $\tilde{F}=H_{\partial B}^{*} H_{\partial B}$, $\tilde{ G}=H_{\partial B}^{*}$, $\tilde{T}=I$ and $R=R$, results that \begin{equation*}
	H_{\partial B}^{*}H_{\partial B} \leq_{\mathrm{ fin} } \mathrm{Re}\;F_D
	\end{equation*}
	holds.
	
          Let $B \not\subset D$. There exists $\Gamma\subset \partial D\setminus \overline{D}$ such that $\Gamma$ is relatively open set and $\mathbb{R}^2\setminus \overline{\Gamma \cup D}$ is connected. Setting $W:={\rm Ran}(H_{\Gamma}^{*})\subset {\rm Ran}(H_{\partial B}^{*})$, which is infinite-dimensional and $W \cap {\rm Ran}(\mathcal{B}_{{\rm Neu},D})=\{ 0\}$ by Theorem \ref{thm:H-adjoint}, and  applying  (2) of Theorem \ref{thm:MM} with $F=F_D$, $G=\mathcal{B}_{{\rm Dir},D}$, $T=2 \kappa^2\mathcal{S}_{{\rm Dir},D}^{*}$, $\tilde{F}=H_{\partial B}^{*} H_{\partial B}$, $\tilde{ G}=H_{\partial B}^{*}$, $\tilde{T}=I$, $\tilde{T}_0=I$ and $W=W$, lead to 
	\begin{equation*}
	H_{\partial B}^{*}H_{\partial B} \not\leq_{\mathrm{ fin} }  \mathrm{Re}\;F_D.
      \end{equation*}
      Therefore, the proof of the theorem is finished.
\end{proof}
Theorem \ref{thm:MM-d} or \ref{thm:MM-n} allows us to use the numbers of positive eigenvalues of the operator $\pm\mathrm{Re}\; F_D +  H_{\partial B}^{*}H_{\partial B}$ to determine whether the sampling domain $B$ belongs to the Dirichlet or Neumann obstacle $D$, which will be shown by some numerical examples. Besides, it is also worth noting that the monotonicity method does not rely on assumptions concerning transmission eigenvalues.  As suggested by \cite{albicker_monotonicity_2020}, we define the indicator functions $W_3$ and $W_4$ for Dirichlet and Neumann obstacles respectively, as follows:
\begin{align*}
W_3 (B) &:= \# \{\lambda_j>0: (\lambda_j)_{j \in  \mathbb{N}}  \,\, \text{are the eigenvalues of}\,\, \mathrm{Re}\; F_D +  H_{\partial B}^{*}H_{\partial B} \}\\
W_4 (B) &:= \# \{\lambda_j > 0: (\lambda_j)_{j \in  \mathbb{N}}  \,\, \text{are the eigenvalues of}\,\, -\mathrm{Re}\; F_D +  H_{\partial B}^{*}H_{\partial B} \}.
\end{align*}
 Based on Theorems \ref{thm:MM-d} and \ref{thm:MM-n}, $W_3(B)$ and $W_4(B)$ take smaller values if $B\subset D$, whereas they  take larger values if $B\not \subset D$.

\section{Numerical results}
\label{sec:numeri}
Using the shape characterization of the obstacle $D$ provided by Theorems \ref{thm:range-identities} and \ref{thm:MM}, we present the numerical implementation of the factorization and monotonicity methods, and report numerical results for the reconstruction of the clamped obstacles with various shapes using far-field data to show the performance of these methods.

The generation of synthetic far-field data was achieved by solving the direct problem numerically via the boundary integral equation method from \cite{dong_novel_2024}. For $N$ incident directions $\bm{ d }_j=(\cos (2\pi(j-1)/N), \sin (2\pi(j-1)/N))^T$ $j=1,2,\cdots N$, $u^{\infty}(\bm{ d }_i,\bm{ d }_j )$ are obtained at $N$ observation directions $\bm{ d }_i$ $i=1,2,\cdots N$. For the far-field operator $F_D$, we employ the trapezoid rule to derive its discrete version 
\begin{equation*}
\bm{ F }= \frac{2\pi}{N} \left[ u^{\infty}(\bm{ d }_i,\bm{ d }_j ) \right]_{i,j=1}^N \in \mathbb{C}^{N \times N}.
\end{equation*}
To simulate the noisy data, we consider the following random perturbation $\bm{ F }_{\delta}$  of the matrix $\bm{ F }$
\begin{equation} 
\notag 
\bm{ F }_{\delta} =  \left[\bm{ F }_{ij}  \left( 1 
+ \delta ( \bm{ \xi}_{1,ij}+\mathrm{ i } \bm{\xi}_{2,ij} ) \right)  \right]_{i,j=1}^N , \quad \| \bm{ \xi}_{1}+\mathrm{ i } \bm{\xi}_{2}\|_2 =1
\end{equation}
where $\delta$ is the noise level of the data, and $\bm{ \xi}_1 \in  \mathbb{R}^{N \times N} $ and $\bm{ \xi}_2 \in \mathbb{R}^{N \times N}$ are real-valued random  matrices. 
Let $\phi_{\bm{ z }}$ be approximated by the vector $\phi_{\bm{ z }}=(e^{\mathrm{ i } \kappa \bm{ d }_j \cdot  \bm{ z }})_{j=1}^N$. Let  $(\lambda_{1,j},\psi_{1,j})_{j=1}^N$ and $(\lambda_{2,j},\psi_{2,j})_{j=1}^N$ denote the eigensystems of the matrices $(\bm{ F }^{*} \bm{ F })^{1/4} $ and $\bm{ F }_{\#}^{1/2}$ respectively.
The discrete indicator functions for the factorization methods are defined as
\begin{align*}
 W_{1,N}(\bm{ z } )&= \left[
              \sum\limits_{j=1}^{N} \dfrac{| \lambda_{1,j} | }{\alpha +| \lambda_{1,j} |^{2} }                  |( \phi_{\bm{ z }} ,\psi_{1,j}) |^2 \right]^{-1}, \\
 W_{2,N}(\bm{ z } )&= \left[
              \sum\limits_{j=1}^{N} \dfrac{| \lambda_{2,j} | }{\alpha +| \lambda_{2,j} |^{2} }                  |( \phi_{\bm{ z }} ,\psi_{2,j}) |^2 \right]^{-1},
\end{align*}
where $\alpha$ is the regularization parameter and can be chosen by the discrepancy principle. By virtue of the indicator functions, we propose a numerical reconstruction algorithms:
\begin{algorithm}[H]
  \caption{Factorization Methods (FM)}
  \begin{algorithmic}[1]
\STATE Select a grid of sampling points $\bm{ z }$ in  $\Omega$ containing the obstacle $D$.

\STATE Compute the indicator function $W_{1,N}(\bm{ z })$ (or $W_{2,N}(\bm{ z })$) for each sampling point $\bm{ z }\in \Omega$.

\STATE  Plot the contours of the indicator function $W_{1,N}(\bm{ z })$ (or $W_{2,N}(\bm{ z })$).

\STATE Determine the obstacle $D$ according to the region where  $W_{1,N}(\bm{ z })$ (or $W_{2,N}(\bm{ z })$) attains large values.
    \end{algorithmic}
  \end{algorithm}

  Following \cite{albicker_monotonicity_2020}, we choose $B=B_{h/2}(\bm{ z })$ and for the operator $ H_{\partial B}^{*}H_{\partial B}$, we also adopt the  trapezoid rule to derive its discrete version
 \begin{equation*}
  \bm{ T }_{B}=
  \frac{2\pi}{N} \left[\pi h e^{\mathrm{ i } \kappa \bm{ z }\cdot  (\bm{d}_j- \bm{d}_i)}J_0(\frac{\kappa h}{2}| \bm{d}_i-\bm{d}_j | ) \right]_{i,j=1}^N \in  \mathbb{C}^{N \times N}.
\end{equation*}
Then we obtain the discrete version of the operator  $\pm\mathrm{Re}\;F_D+  H_{\partial B}^{*}H_{\partial B}$:
\begin{equation*}
\bm{ A }_{\pm} = \pm \mathrm{Re}\; \bm{ F } +   \bm{ T }_{B}.
\end{equation*}
Moreover, for the noisy data $\bm{ F }_{\delta}$,  we consider the  following random perturbation $\bm{ A }_{\delta} $ of the matrix $\bm{ A }$:
\begin{equation*}
\bm{ A }_{\pm,\delta} = \pm \mathrm{Re}\;\bm{ F }_{\delta} + \bm{ T }_{B},
\end{equation*}
where $\mathrm{Re}\;\bm{ F }_{\delta} = (\bm{ F }_{\delta}+\bm{ F }_{\delta}^{*})/2$.
For the monotonicity method, the discrete indicator function is given by
\begin{align*}
  W_{3,N} (\bm{ z }) = \# \{\lambda_j> \tilde{\delta} : (\lambda_j)_{j =1}^N  \,\, \text{are the eigenvalues of}\,\, \bm{ A }_{+,\delta}\},
  W_{4,N} (\bm{ z }) = \# \{\lambda_j> \tilde{\delta} : (\lambda_j)_{j =1}^N  \,\, \text{are the eigenvalues of}\,\, \bm{ A }_{-,\delta}\},
\end{align*}
where $\tilde{\delta}>0$ is a threshold parameter and its selection can be found in section 6.2 of \cite{albicker_monotonicity_2020},
 and we have the following numerical reconstruction algorithm based on the monotonicity method:
\begin{algorithm}[H]
  \caption{Monotonicity Method (MM)}
  \begin{algorithmic}[1]
\STATE Select a grid of sampling points $\bm{ z }$ in  $\Omega$ containing the obstacle $D$.

\STATE Compute the indicator function $W_{3,N}(\bm{ z })$ (or  $W_{4,N}(\bm{ z })$) for each sampling point $\bm{ z }\in \Omega$.

\STATE  Plot the contour of the indicator function $W_{3,N}(\bm{ z })$  (or  $W_{4,N}(\bm{ z })$).

\STATE Determine the Dirichlet (or Neumann) obstacle $D$ according to the region where $W_{3,N}(\bm{ z })$  (or  $W_{4,N}(\bm{ z })$)   attains small values.
    \end{algorithmic}
 \end{algorithm}

{\bf Example 1.}
Our first numerical example addresses the reconstruction of a single obstacle using noise-free far-field data. Specifically, the obstacle is a circular disk of radius $0.4$ centered at the origin and its exact shape is represented by the dashed line in Figure \ref{fig:circular}.
The sampling domain $\Omega=[-4,4]\times[-4,4]$ is discretized using a uniform grid with $81$ points in each direction. We choose $h=0.1$, the regularization parameter $\alpha=0$ and the thresholding parameter $\tilde{\delta}=0$.
The reconstruction results are shown in Figure \ref{fig:circular} when the wave number $\kappa$ is $2\pi$.  We observe that the reconstruction results are in accordance with the exact shape of the obstacle.
\begin{figure}[htbp!]
  \centering
     \begin{subfigure}[b]{0.31\textwidth}
         \centering
         \includegraphics[width=\textwidth]{ 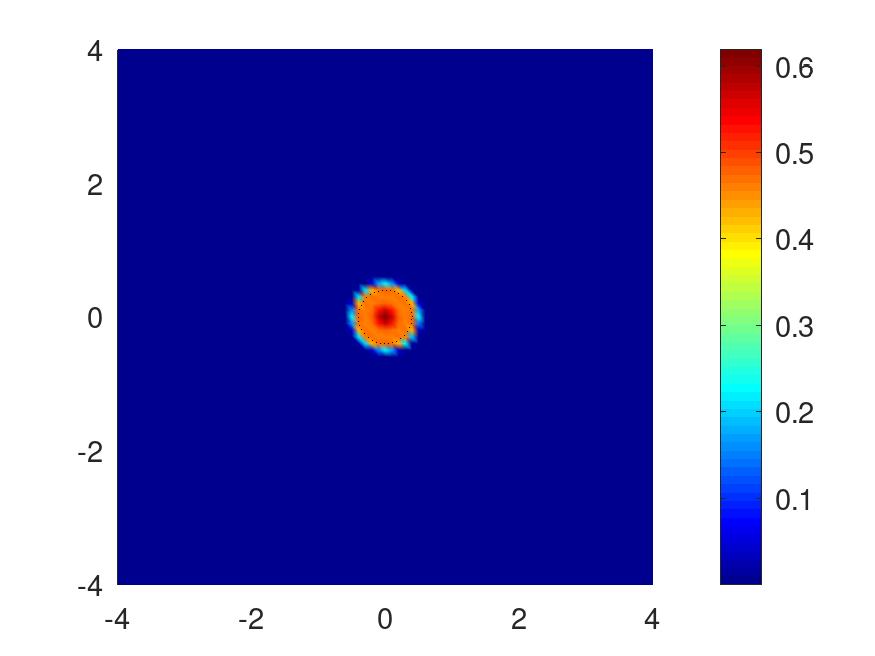}
         \caption{$W_1$}
         \label{fig:circular-1}
       \end{subfigure}
       \hfill
     \begin{subfigure}[b]{0.31\textwidth}
         \centering
         \includegraphics[width=\textwidth]{ 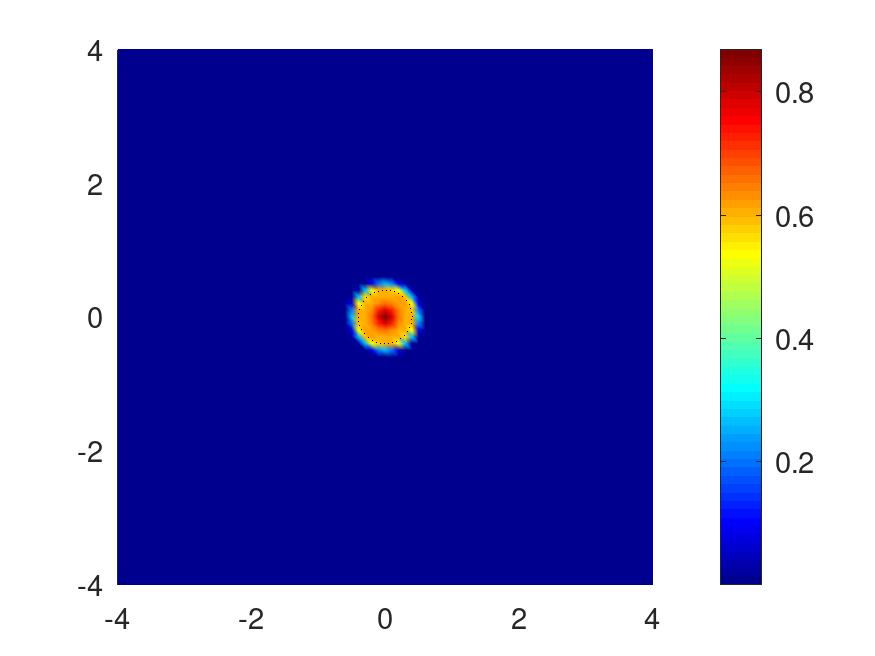}
         \caption{$W_2$}
         \label{fig:circular-2}
     \end{subfigure}
     \hfill
     \begin{subfigure}[b]{0.31\textwidth}
         \centering
         \includegraphics[width=\textwidth]{ 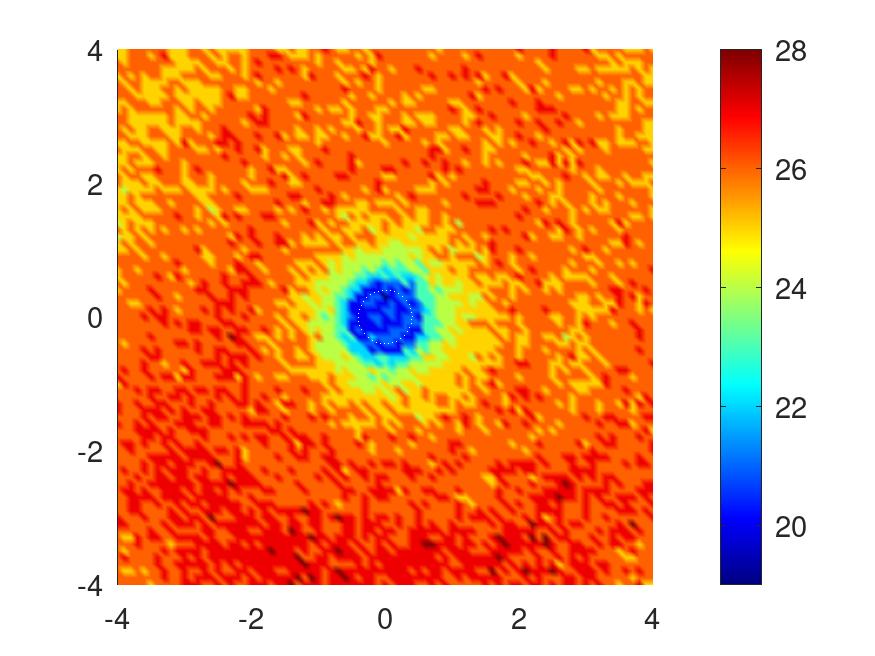}
         \caption{$W_3$}
         \label{fig:circular-3}
       \end{subfigure}
       \caption{Example 1: the reconstruction of a circular obstacle}
         \label{fig:circular}
\end{figure}

{\bf Example 2.}
In the second example, we investigate the case when the wavenumber is a clamped transmission eigenvalue.
The impenetrable obstacle $D$ is characterized by an ellipse, whose parameterization is
  \begin{equation*}
    \bm{ x }(t)= (\cos(t),  0.5\sin(t)  )^T \quad t \in [0,2\pi],
  \end{equation*}
and  its exact shape is described by dashed line.
We note that  the fourth clamped transmission eigenvalue  corresponds to $\kappa \approx 5.36324$ for the ellipse $D$ according to Section 5.2 in \cite{harris_transmission_2025}. Likewise, the sampling domain is $ \Omega= [-4, 4]\times[-4, 4]$, which is discretized using a uniform grid with $81$ points in each direction. We again choose $h = 0.1$, the regularization parameter $\alpha=0$ and the thresholding parameter  $\tilde{\delta}=0$. In Figure \ref{fig:ellipse} we present the reconstruction results by noise-free far-field data when the wave number $\kappa$ is $5.36324$. We can see that the factorization methods can not detect the obstacle when the wavenumber is a clamped transmission eigenvalue, whereas the monotonicity method remains effective.
\begin{figure}[htbp!]
  \centering
     \begin{subfigure}[b]{0.31\textwidth}
         \centering
         \includegraphics[width=\textwidth]{ 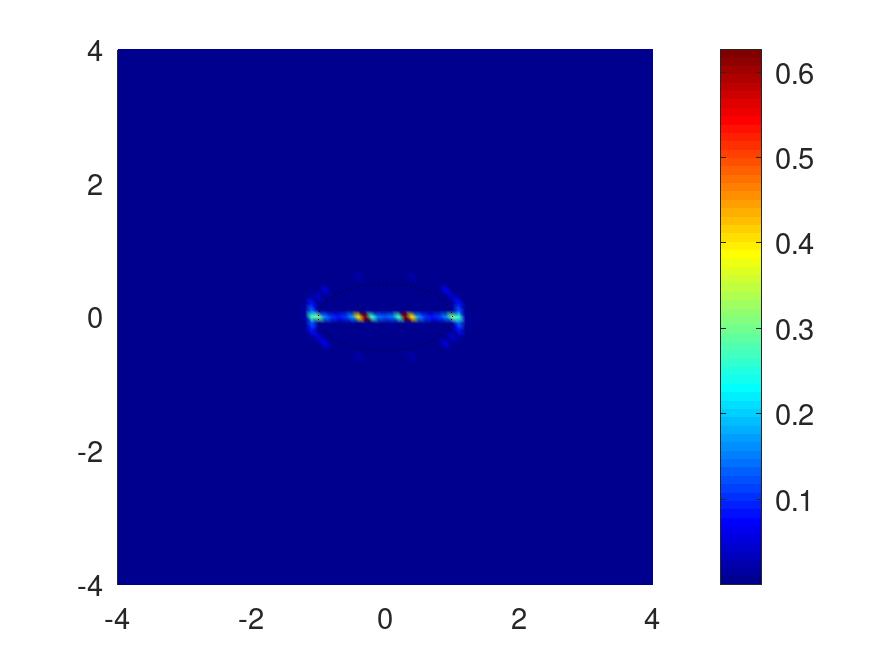}
         \caption{$W_1$}
         \label{fig:ellipse-1}
       \end{subfigure}
       \hfill
     \begin{subfigure}[b]{0.31\textwidth}
         \centering
         \includegraphics[width=\textwidth]{ 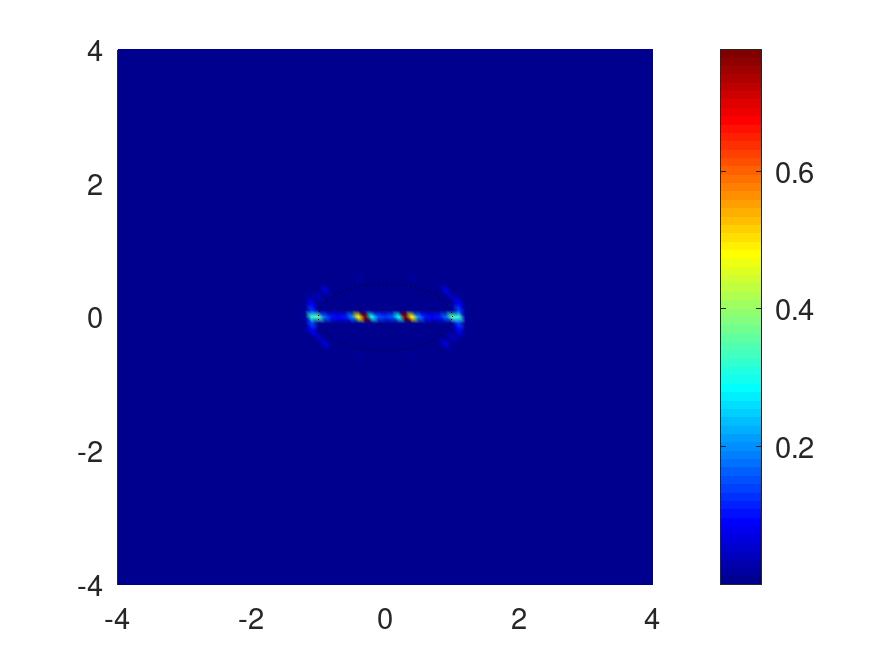}
         \caption{$W_2$}
         \label{fig:ellipse-2}
     \end{subfigure}
     \hfill
     \begin{subfigure}[b]{0.31\textwidth}
         \centering
         \includegraphics[width=\textwidth]{ 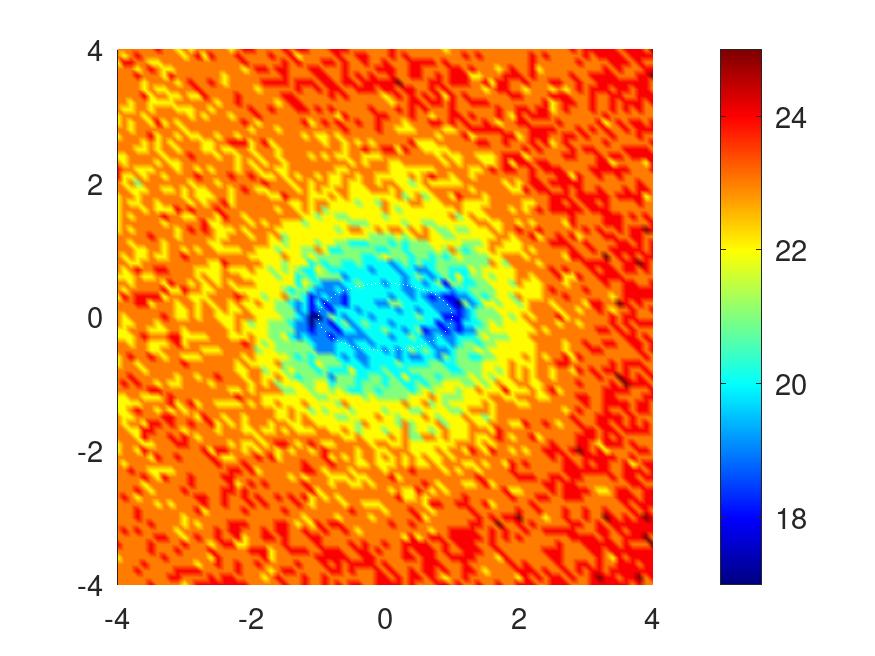}
         \caption{$W_3$}
         \label{fig:ellipse-3}
       \end{subfigure}
       \caption{Example 2: the reconstruction of a ellipse obstacle}
         \label{fig:ellipse}
\end{figure}

{\bf Example 3.}
In the third example, we study the influence of the noise on the equality of reconstruction. The impenetrable obstacle $D$ is characterized by a round square, whose parameterization is
  \begin{equation*}
    \bm{ x }(t)= 0.25(\cos^3(t)+\cos(t), \sin^3(t) + \sin(t)  )^T \quad t \in [0,2\pi].
  \end{equation*}
and its exact shape is described by  dashed line. The sampling domain $ \Omega= [-4, 4]\times[-4, 4]$  is discretized using a uniform grid with $81$ points in each direction. We choose $h = 0.1$, the regularization parameter $\alpha=10^{-6}$ and the thresholding parameter  $\tilde{\delta}=10^{-14}$. Figure \ref{fig:square} illustrates the reconstruction results of the three indicator functions from  noise-free, $1\%$ and $5\%$ data with $\kappa=2\pi$.
As illustrated in Figure \ref{fig:square}, our algorithms are robust with respect to random noise. Furthermore, the factorization method can achieve higher reconstruction equality compared to the monotonicity method.
\begin{figure}[htbp!]
  \centering
     \begin{subfigure}[b]{0.31\textwidth}
         \centering
         \includegraphics[width=\textwidth]{ 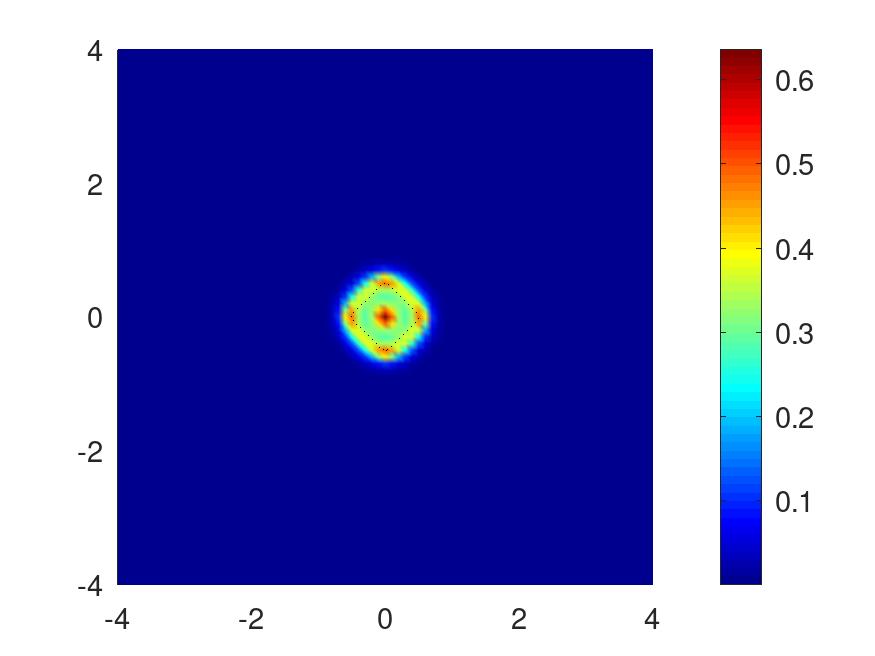}
         \caption{No noise}
         \label{fig:square-1-1}
       \end{subfigure}
       \hfill
     \begin{subfigure}[b]{0.31\textwidth}
         \centering
         \includegraphics[width=\textwidth]{ 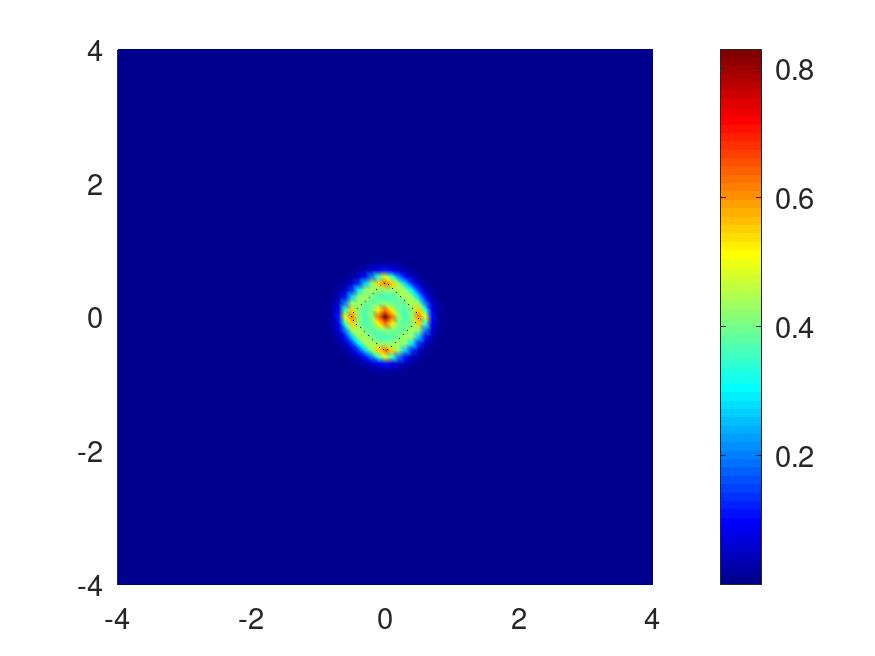}
         \caption{No noise}
         \label{fig:square-2-1}
     \end{subfigure}
     \hfill
     \begin{subfigure}[b]{0.31\textwidth}
         \centering
         \includegraphics[width=\textwidth]{ 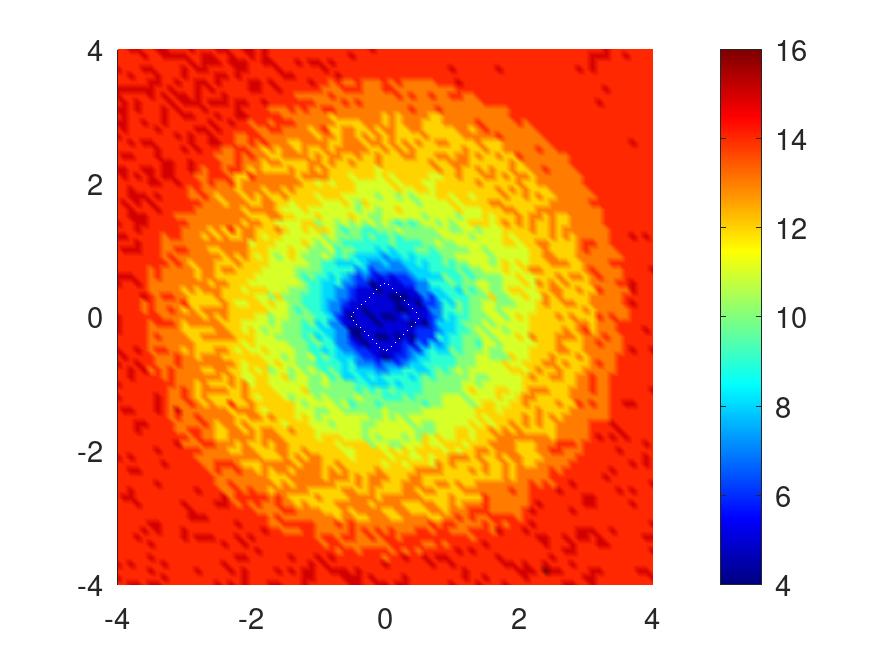}
         \caption{No noise}
         \label{fig:square-3-1}
       \end{subfigure}

        \begin{subfigure}[b]{0.31\textwidth}
         \centering
         \includegraphics[width=\textwidth]{ 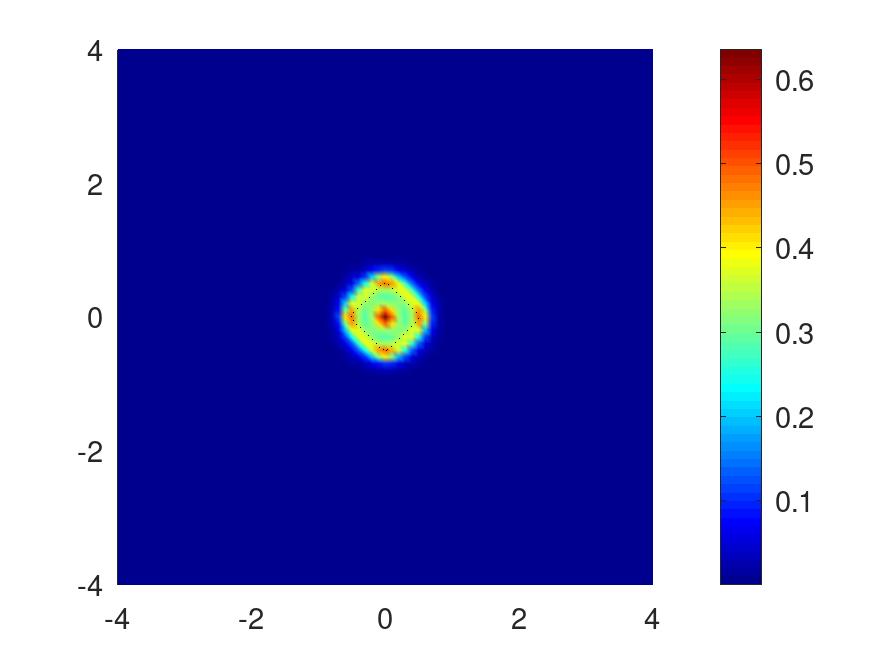}
         \caption{$1\%$ noise}
         \label{fig:square-1-2}
       \end{subfigure}
       \hfill
     \begin{subfigure}[b]{0.31\textwidth}
         \centering
         \includegraphics[width=\textwidth]{ 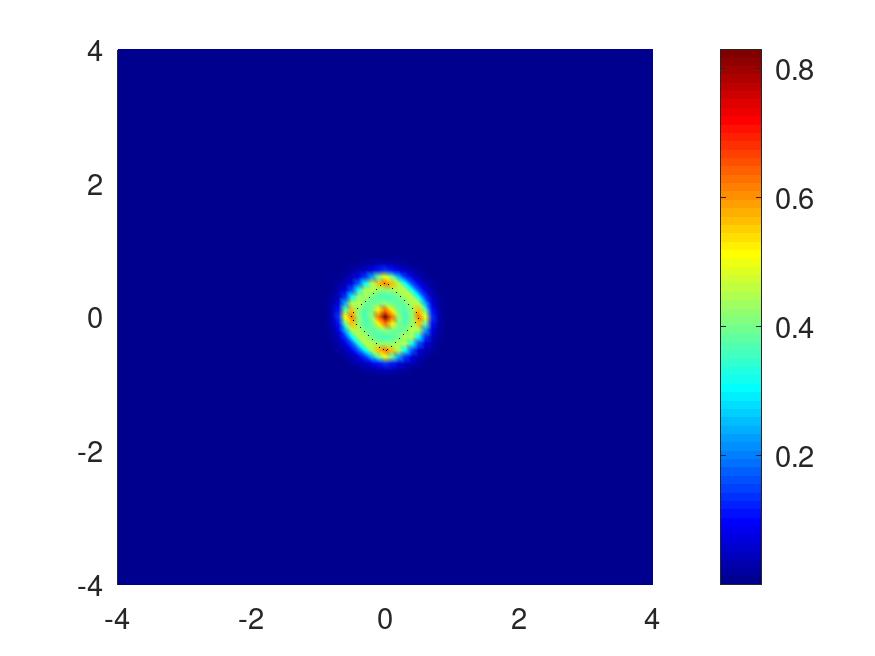}
         \caption{$1\%$ noise}
         \label{fig:square-2-2}
     \end{subfigure}
     \hfill
     \begin{subfigure}[b]{0.31\textwidth}
         \centering
         \includegraphics[width=\textwidth]{ 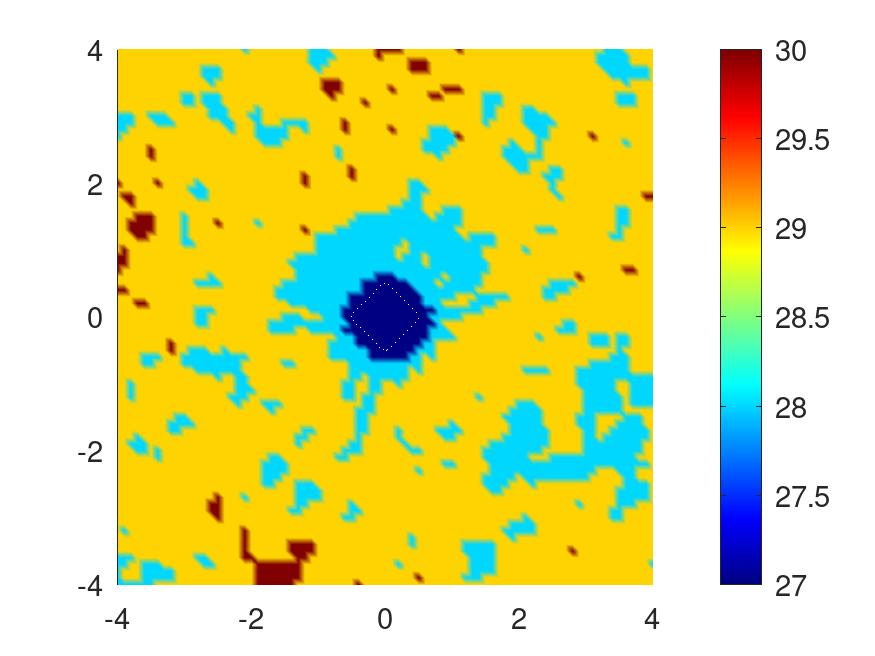}
         \caption{$1\%$ noise}
         \label{fig:square-3-2}
       \end{subfigure}

               \begin{subfigure}[b]{0.31\textwidth}
         \centering
         \includegraphics[width=\textwidth]{ 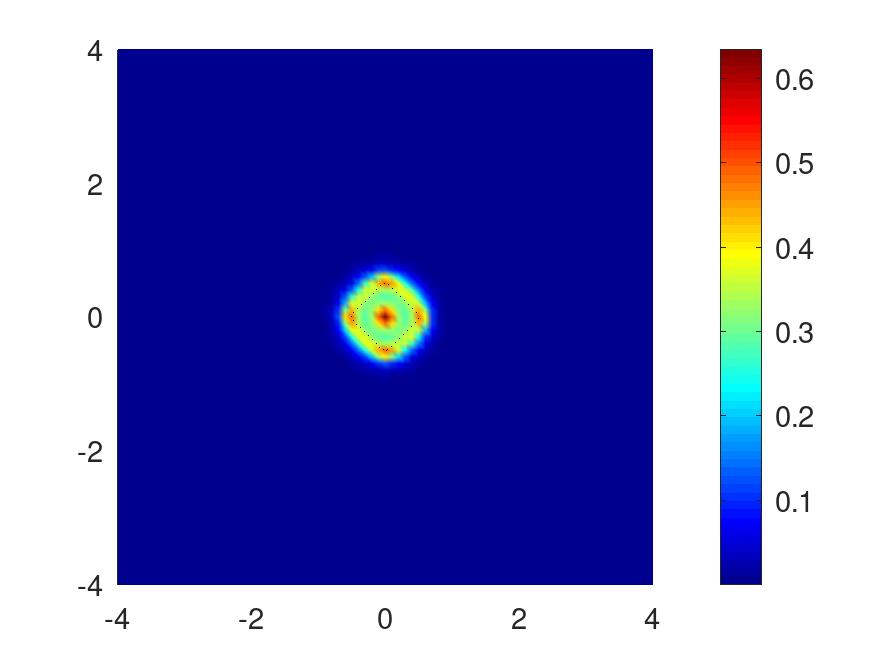}
         \caption{$5\%$ noise}
         \label{fig:square-1-3}
       \end{subfigure}
       \hfill
     \begin{subfigure}[b]{0.31\textwidth}
         \centering
         \includegraphics[width=\textwidth]{ 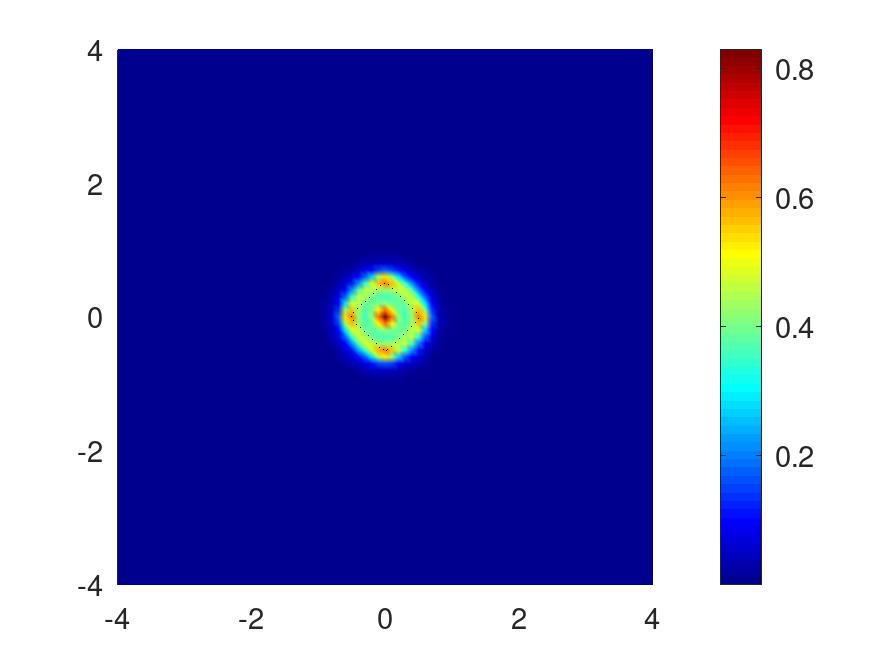}
         \caption{$5\%$ noise}
         \label{fig:square-2-3}
     \end{subfigure}
     \hfill
     \begin{subfigure}[b]{0.31\textwidth}
         \centering
         \includegraphics[width=\textwidth]{ 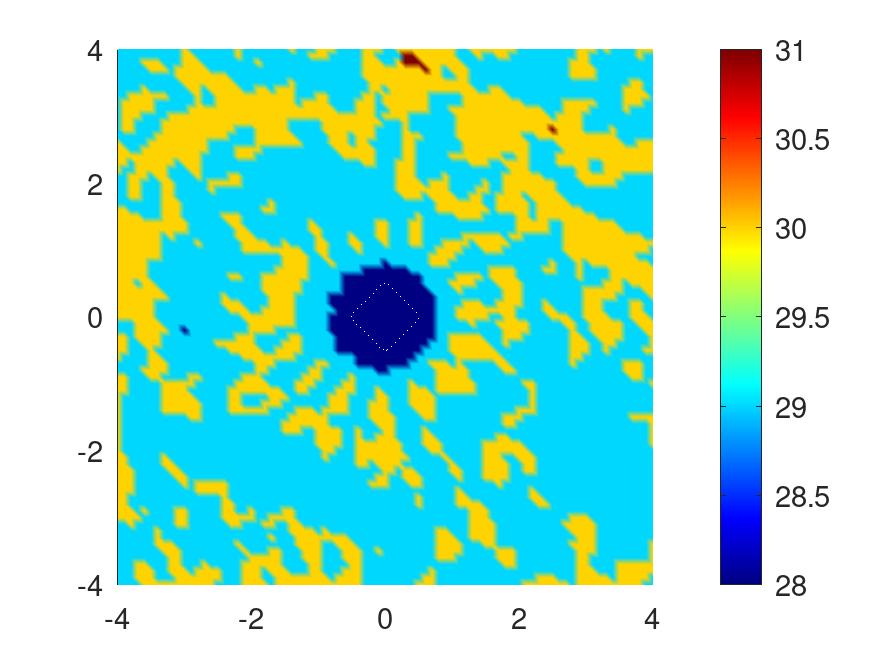}
         \caption{$5\%$ noise}
         \label{fig:square-3-3}
       \end{subfigure}
      \caption{ Example 3: the $j-$column corresponds to the reconstruction results of the indicator function $W_j$ ($j=1,2,3$).}
         \label{fig:square}
\end{figure}

{\bf Example 4.}
The fourth numerical example deals with the case of two  Dirichlet obstacles, which consists of a round triangle and a kite domain. The specific parameterization of the the two obstacles are given by
\begin{align*}
\bm{ x }_1(t) & =  (-4,-3)^T + (0.8+0.12\cos(3t))( \cos(t), \sin t)^T \quad t \in [0,2\pi], \\
 \bm{ x }_2(t)&= (3,4)^T +  0.5(0.65 \cos(2t)+\cos t-0.65, 1.5\sin t)^T  \quad t \in [0,2\pi],
\end{align*}
and its exact shape is described by dashed line.
 The sampling domain $ \Omega= [-10, 10]\times[-10, 10]$  is discretized using a uniform grid with $201$ points in each direction. We choose $h = 0.1$, the regularization parameter $\alpha=10^{-6}$ and the thresholding parameter  $\tilde{\delta}=10^{-14}$. The reconstruction results displayed in Figure \ref{fig:two} correspond to the cases when $\kappa = 2\pi$, utilizing data with noise levels of  $0\%$, $1\%$ and $5\%$.
The numerical results demonstrate that the proposed algorithms can obtain satisfactory reconstruction results for two obstacles.
\begin{figure}[htbp!]
  \centering
     \begin{subfigure}[b]{0.31\textwidth}
         \centering
         \includegraphics[width=\textwidth]{ 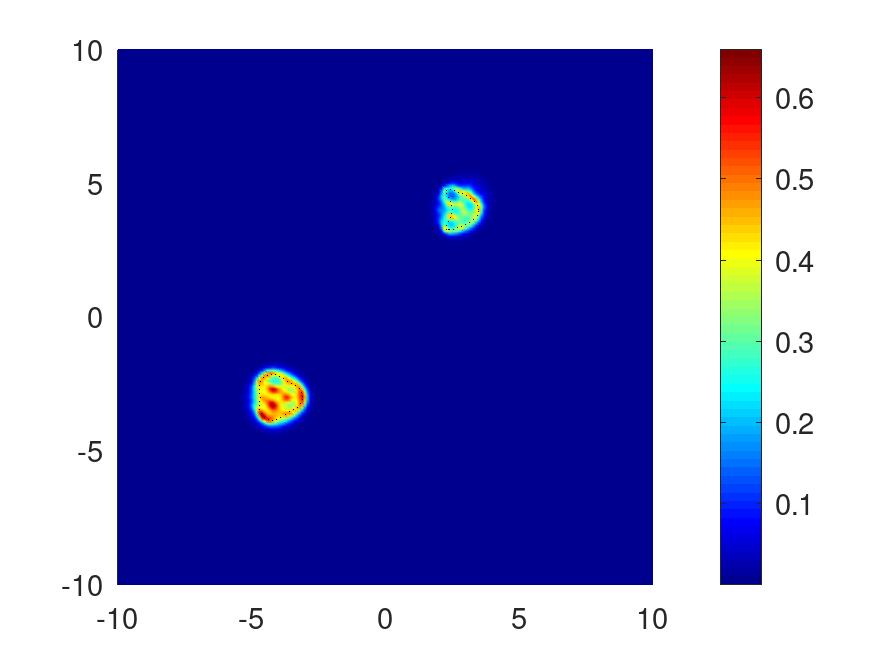}
         \caption{No noise}
         \label{fig:two-1-1}
       \end{subfigure}
       \hfill
     \begin{subfigure}[b]{0.31\textwidth}
         \centering
         \includegraphics[width=\textwidth]{ 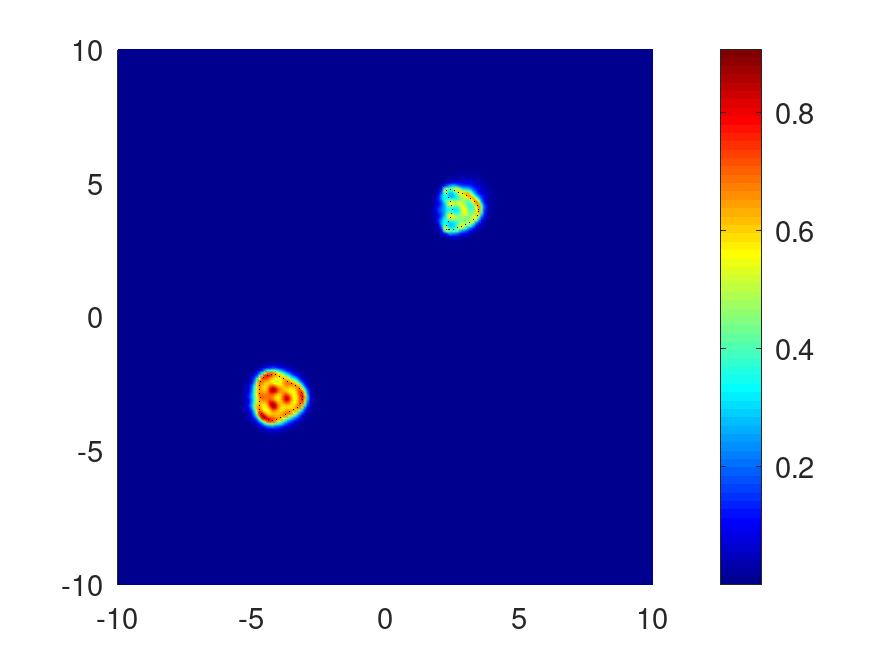}
         \caption{No noise}
         \label{fig:two-2-1}
     \end{subfigure}
     \hfill
     \begin{subfigure}[b]{0.31\textwidth}
         \centering
         \includegraphics[width=\textwidth]{ 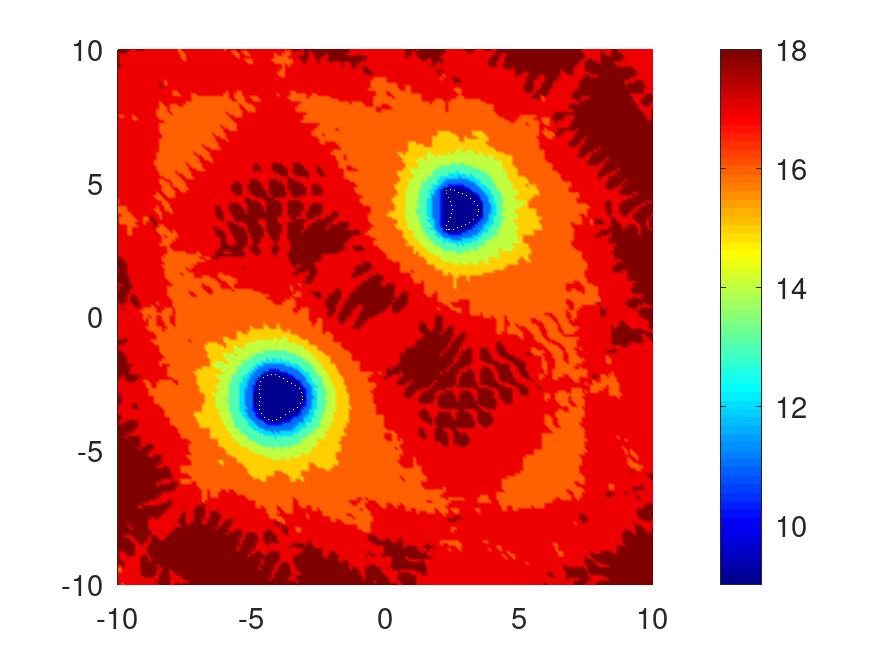}
         \caption{No noise}
         \label{fig:two-3-1}
       \end{subfigure}

            \begin{subfigure}[b]{0.31\textwidth}
         \centering
         \includegraphics[width=\textwidth]{ 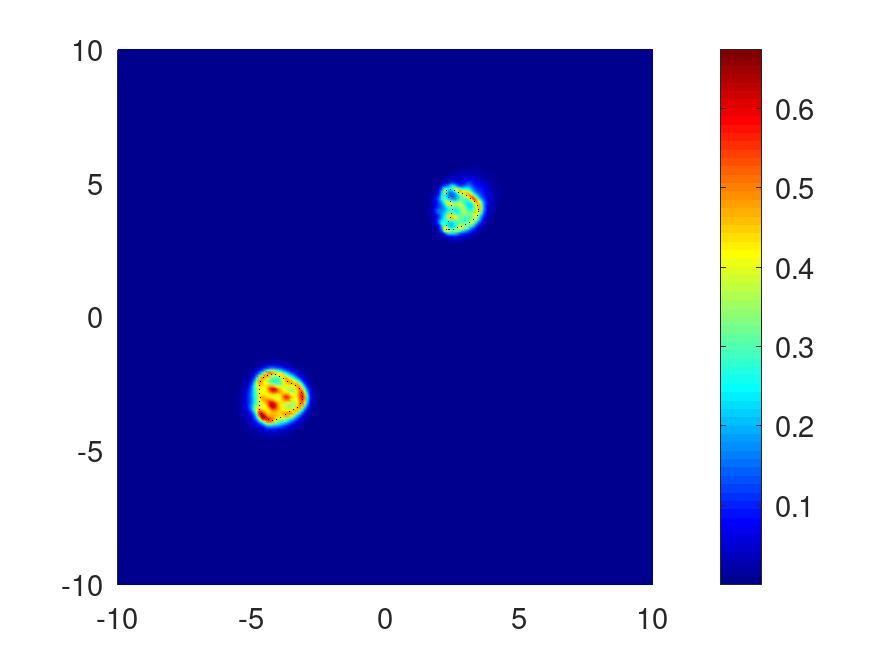}
         \caption{$1\%$ noise}
         \label{fig:two-1-2}
       \end{subfigure}
       \hfill
     \begin{subfigure}[b]{0.31\textwidth}
         \centering
         \includegraphics[width=\textwidth]{ 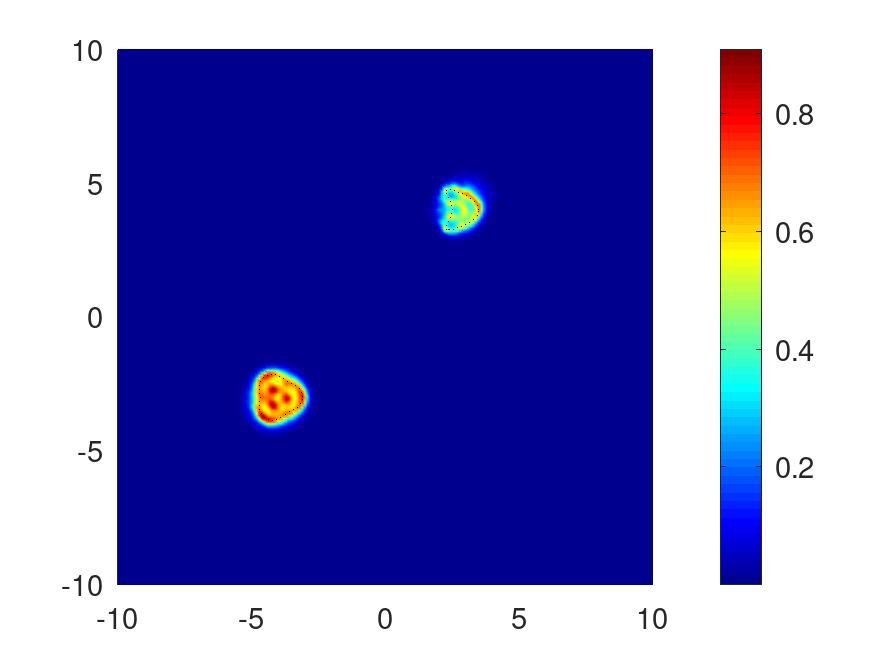}
         \caption{$1\%$ noise}
         \label{fig:two-2-2}
     \end{subfigure}
     \hfill
     \begin{subfigure}[b]{0.31\textwidth}
         \centering
         \includegraphics[width=\textwidth]{ 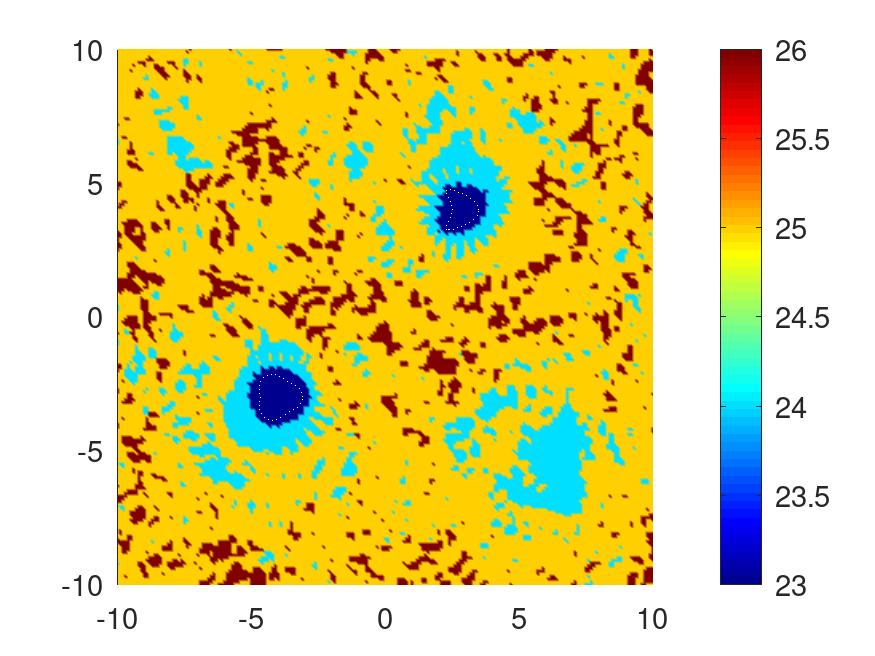}
         \caption{$1\%$ noise}
         \label{fig:two-3-2}
       \end{subfigure}

        \begin{subfigure}[b]{0.31\textwidth}
         \centering
         \includegraphics[width=\textwidth]{ 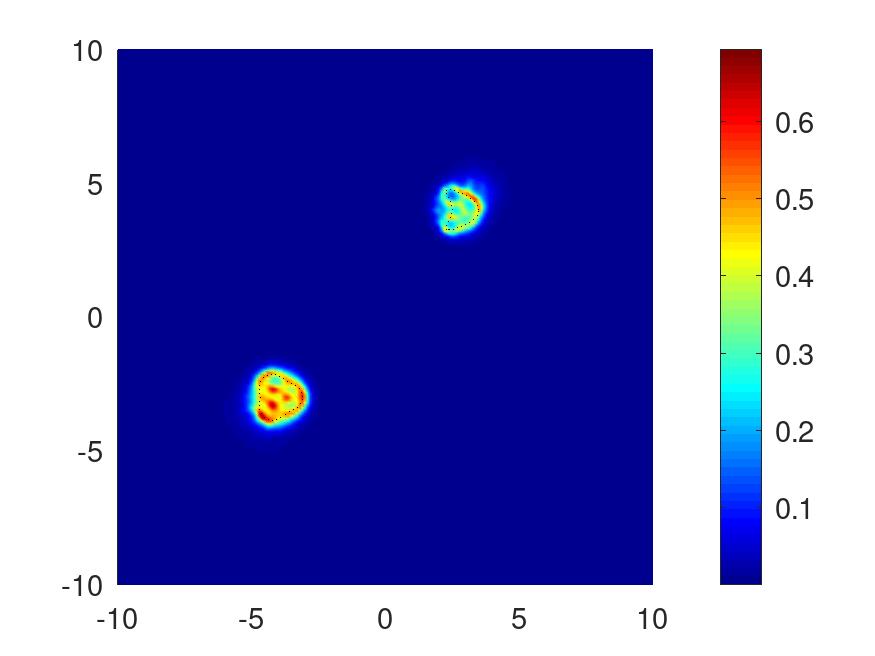}
         \caption{$5\%$ noise}
         \label{fig:two-1-3}
       \end{subfigure}
       \hfill
     \begin{subfigure}[b]{0.31\textwidth}
         \centering
         \includegraphics[width=\textwidth]{ 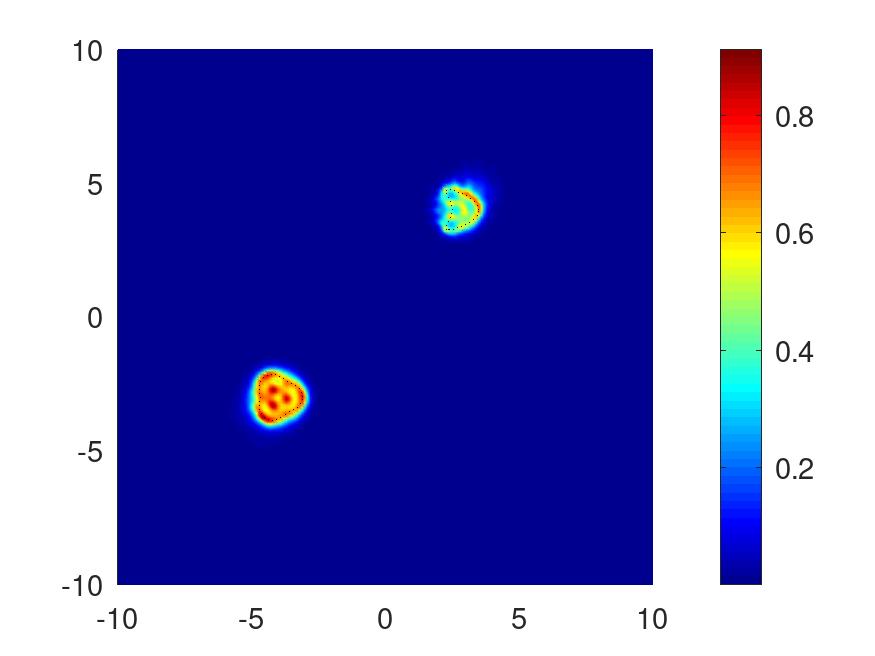}
         \caption{$5\%$ noise}
         \label{fig:two-2-3}
     \end{subfigure}
     \hfill
     \begin{subfigure}[b]{0.31\textwidth}
         \centering
         \includegraphics[width=\textwidth]{ 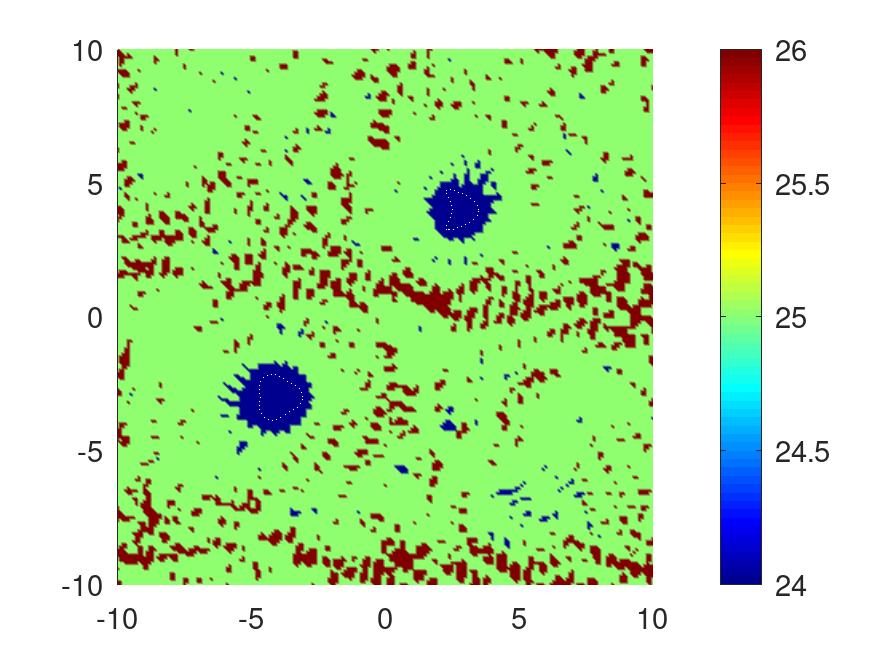}
         \caption{$5\%$ noise}
         \label{fig:two-3-3}
       \end{subfigure}
       \caption{ Example 4: the $j-$column corresponds to the reconstruction results of the indicator function $W_j$ ($j=1,2,3$).}
         \label{fig:two}
\end{figure}

\section*{Acknowledgments}

This work is supported by the National Natural Science Foundation of China(No. 12371393 and 11971150), Natural Science Foundation of Henan Province(No. 242300421047), Natural Science Foundation of Xinjiang Uygur Autonomous Region(No. 2024D01C227) and  Science and Technology Program of Xinjiang Uyghur Autonomous Region(No. 2025A03011-3).

\end{document}